\DeclareMathOperator{\sgn}{sgn}
\newtheorem{theorem}{Theorem}
\newtheorem{lemma}{Lemma}
\newtheorem{proposition}{Proposition}
\newtheorem{definition}{Definition}
\newcommand{\keywords}[1]{\par\addvspace\baselineskip
\noindent\enspace\ignorespaces#1}
\newcommand*{\barfix}[2][.175ex]{%
  \mathpalette{\@barfix{#1}}{#2}%
}
\newcommand*{\@barfix}[3]{%
  \vbox{%
    \kern#1\relax
    \hbox{$#2#3\m@th$}%
  }%
}
\newcommand{\modch}{\color{black}}
\begin{document}

\title{Rate of estimation for the stationary distribution of jump-processes over anisotropic Holder classes.}
\author{Chiara Amorino\thanks{Unit\'e de Recherche en Math\'ematiques, Universit\'e du Luxembourg.
The author gratefully acknowledges financial support of ERC Consolidator Grant 815703 “STAMFORD: Statistical Methods for High Dimensional Diffusions”.}} 

\maketitle

\begin{abstract}

We study the problem of the non-parametric estimation for the
density $\pi$ of the stationary distribution of the multivariate stochastic differential equation with jumps $(X_t)_{0 \le t \le T}$, when the dimension $d$ is such that $d \ge 3$. From the continuous observation of the sampling path on $[0, T]$ we show that, under anisotropic H{\"o}lder smoothness constraints, kernel based estimators can achieve fast convergence rates. In particular, they are as fast as the ones found by Dalalyan and Reiss \cite{RD} for the estimation of the invariant density in the case without jumps under isotropic H{\"o}lder smoothness constraints. Moreover, they are faster than the ones found by Strauch \cite{Strauch} for the invariant density estimation of continuous stochastic differential equations, under anisotropic H{\"o}lder smoothness constraints.
Furthermore, we obtain a minimax lower bound on the L2-risk for
pointwise estimation, with the same rate up to a $\log (T)$ term. It implies that, on a class of diffusions whose invariant density belongs to the anisotropic Holder class we are considering, it is impossible to find an estimator with a rate of estimation faster than the one we propose.
\keywords{Minimax risk, convergence rate, non-parametric statistics, ergodic diffusion with jumps, L\'evy driven SDE, density estimation}

\end{abstract}

\section{Introduction}
Diffusion processes with jumps are recently becoming 
powerful tools to model various stochastic phenomena in many areas such as physics, biology, medical sciences, social sciences, economics, and so on. In finance, jump-processes were introduced to model the dynamic of exchange rates (\cite{Bates96}), asset prices (\cite{Merton76},\cite{Kou02}), or volatility processes (\cite{BarShe01},\cite{EraJohPol03}). Utilization of jump-processes in neuroscience, instead, can be found for instance in \cite{DitGre13}. Therefore, stochastic differential equations with jumps are nowadays widely studied by statisticians. 

In this work, we aim at estimating the invariant density $\pi$ associated to the process $(X_t)_{t \ge 0}$, solution of the following multivariate stochastic
differential equation with Levy-type jumps:
\begin{equation}
X_t= X_0 + \int_0^t b( X_s)ds + \int_0^t a(X_s)dW_s + \int_0^t \int_{\mathbb{R}^d \backslash \left \{0 \right \} }
\gamma(X_{s^-})z \tilde{\mu}(ds,dz),
\label{eq: model intro}
\end{equation}
where $W$ is a $d$-dimensional Brownian motion and $\tilde{\mu}$ a compensated Poisson random measure with a possible infinite jump activity. We assume that a continuous record of observations $X^T = (X_t)_{0 \le t \le T}$ is available. 

The problem of non-parametric estimation of the stationary measure of
a continuous mixing process is both a long-standing problem (see for instance
N'Guyen \cite{Ngu79} and references therein) and a living topic. First of all, because invariant distributions are crucial in the study of the long-run behaviour of diffusions (we refer to Has’minskii \cite{Has80} and Ethier and Kurtz \cite{EthKur86} for background on the stability of stochastic differential systems). Then, because of the huge quantity of numerical methods connected to it (such as the Markov chain Monte Carlo methods). In Lamberton and Pages \cite{LamPag02}, for example, it has been proposed an approximation algorithm for the computation of the invariant distribution of a continuous Brownian diffusion, extended then in Panloup \cite{Pan08} to a diffusion with L\'evy jumps.
Recent works on the recursive approximation of the invariant measure can also be found in Honor\'e, Menozzi \cite{HonMen16} for a continuous diffusion and in Gloter, Honor\'e, Loukianova \cite{GloHonLou18} for a Poisson compound process.

Our goal, in particular, is to find the convergence rate of estimation for the stationary measure $\pi$ associated to the process X solution to \eqref{eq: model intro}. After that, we will discuss the optimality of such a rate. \\
Considering stochastic differential equations without jumps, some results are known. In the specific context where the continuous time process is a one-dimensional
diffusion process, observed continuously on some interval $[0,T]$, it has been shown that the rate of estimation of the stationary measure is $\sqrt{T}$ (see Kutoyants \cite{Kut}). If the process is a diffusion observed discretely on $[0,T]$, with a sufficiently high frequency, then it is still possible to estimate the stationary measure with rate $\sqrt{T}$ (see \cite{12 DGY} \cite{ComMer05}).
In \cite{Sch13} Schmisser estimates the successive derivatives $\pi^{(j)}$ of the stationary density associated to a strictly stationary and $\beta$ mixing process $(X_t)_{t \ge 0}$ observed discretely. When $j = 0$, the convergence rate is the same found by Comte and Merlev\`ede in \cite{ComMer01} and \cite{ComMer05}. \\
Regarding the literature on statistical properties of multidimensional diffusion processes in absence of jumps, an important reference is given by Dalalyan and Reiss in \cite{RD} where, as a by-product of the study, they prove
some convergence rates for the pointwise estimation of the invariant density, under isotropic H{\"o}lder smoothness constraints.
In a recent paper \cite{Strauch}, Strauch has extended their work by building adaptive estimators in the multidimensional diffusion case which achieve fast rates of convergence over anisotropic H{\"o}lder balls. As the smoothness properties of elements of a function space may depend on the chosen direction of $\mathbb{R}^d$, the notion of anisotropy plays an important role.

In presence of jumps, we are only aware of a few works which take place in the non parametric framework. In \cite{DL}, for example, the authors estimate in a non-parametric way the drift function of a diffusion with jumps driven by a Hawkes process while in \cite{Unbiased} the estimation of the integrated volatility is considered. Schmisser investigates, in \cite{Sch19}, the non parametric adaptive estimation of the coefficients of a jumps diffusion process and together with Funke she also investigates, in \cite{FunSch18}, the non parametric adaptive estimation of the drift of an integrated jump diffusion process. \\
Closer to the purpose of this work, in \cite{Eulalia} and \cite{Chapitre 4} the convergence rate for the pointwise estimation of the invariant density associated to \eqref{eq: model intro} is considered. The work \cite{Eulalia} is devoted to the low-dimensional case, which is for $d=1$ and $d=2$. In \cite{Chapitre 4}, for $d \ge 3$, it is proved that the mean squared error can be upper bounded by
$T^{- \frac{2\bar{\beta}}{2\barfix{\bar{\beta}}+ d - 2}};$
where $\bar{\beta}$ is the harmonic mean smoothness of the invariant density over the $d$ different dimensions.
We remark that the rate here above reported is the same found by Strauch in \cite{Strauch} in the continuous case, which is also the rate proved by Dalalyan and Reiss, up to replacing the mean smoothness $\bar{\beta}$ with $\beta$, the common smoothness over the $d$ direction. 

In this paper, we want  to  estimate  the  invariant  density $\pi$ by  means  of  a  kernel  estimator, we therefore introduce some kernel function $K: \mathbb{R} \rightarrow \mathbb{R}$. A natural estimator of $\pi$ at some point $x \in \mathbb{R}^d$ in the anisotropic context is given by
$$\hat{\pi}_{h,T}(x) = \frac{1}{T \prod_{l = 1}^d h_l} \int_0^T \prod_{m = 1}^d K(\frac{x_m - X_u^m}{h_m}) du,$$
where $h = (h_1, ... , h_d)$ is a multi - index bandwidth. First of all we extend the previous results by proving the following upper bound for the mean squared error:
{\modch
\begin{equation}
\mathbb{E}[|\hat{\pi}_{h,T}(x) - \pi (x)|^2] \underset{\sim}{<} (\frac{\log T}{ T})^{ \frac{2{\modch \barfix{\bar{\beta}_3}}}{2{\modch \barfix{\bar{\beta}_3}} + d - 2}},
\label{eq: upper intro}
\end{equation}}
where $\beta_1 \le \beta_2 \le ... \le \beta_d $ and $\frac{1}{{\modch \barfix{\bar{\beta}_3}}} := \frac{1}{d -2} \sum_{l \ge 3} \frac{1}{\beta_l}$.
As by construction $\bar{\beta}_3$ is bigger that $\bar{\beta}$, the convergence rate here above is in general faster than the one proposed in \cite{Chapitre 4}. 

After that, we want to understand if it is possible to improve the convergence rate by using other density estimators and which is the best possible rate of convergence. To answer, the idea is to look for lower bounds for the minimax risk associated to the anisotropic Holder class. For the computation of lower bounds, we introduce a jump-process simpler than \eqref{eq: model intro}: 
\begin{equation}
X_t= X_0 + \int_0^t b(X_s) ds + \int_0^t a \, dW_s + \int_0^t \int_{\mathbb{R}^d \backslash \left \{0 \right \}} \gamma \, \, z \, \tilde{\mu}(ds, dz),
\label{eq: lower model intro}
\end{equation}
where $a$ and $\gamma$ are constants. We moreover assume the intensity of the jumps to be finite. 
{\modch We anticipate here the definition of the minimax risk that will be given in \eqref{eq: def minimax risk}:}
$$\mathcal{R}_T (\beta, \mathcal{L}) := \inf_{\tilde{\pi}_T} \sup_{b \in \Sigma (\beta, \mathcal{L})} \mathbb{E}_b^{(T)}[(\tilde{\pi}_T (x_0) - \pi_b (x_0))^2],$$
where the infimum is taken over all possible estimators of the invariant density and $\Sigma (\beta, \mathcal{L})$ gathers the drifts for which the considered process is stationary and whose stationary measure has the prescribed Holder regularity.
 In order to prove a lower bound for the minimax risk, the knowledge of the link between $b$ and $\pi_b$ is crucial. In absence of jumps, {\modch considering reversible diffusion processes with unit diffusion part (as in both \cite{RD} and \cite{Strauch}),} such a connection was explicit: 
$$b (x) = - \nabla V (x) = \frac{1}{2} \nabla (\log \pi_{b}) (x),$$
{\modch where $V \in C^2(\mathbb{R}^d)$ is referred to as potential.} 
Adding the jumps, it is no longer true. In our framework, it is challenging to get a relation between $b$ and $\pi_b$. The idea is to write the drift $b$ in function of $\pi_b$ knowing that they must satisfy $A^*_b \pi_b = 0$, where $A^*$ is the adjoint operator of $A$, the generator of the diffusion $X$ solution to \eqref{eq: lower model intro} (see Proposition \ref{prop: b g} below, a similar argument can also be found in \cite{DGY}). \\
We are in this way able to prove the following main result:
$$\inf_{\tilde{\pi}_T} \sup_{b \in \Sigma (\beta, \mathcal{L})} \mathbb{E}_b^{(T)}[(\tilde{\pi}_T (x_0) - \pi_b (x_0))^2] \underset{\sim}{>} T^{-{ \frac{ 2 {\modch \barfix{\bar{\beta}_3}}}{ 2 {\modch \barfix{\bar{\beta}_3}} + d - 2 }}},$$
where we recall it is $\beta_1 \le \beta_2 \le ... \le \beta_d$ and $\frac{1}{{\modch \barfix{\bar{\beta}_3}}} := \frac{1}{d-2} \sum_{l \ge 3} \frac{1}{\beta_l}. $
It follows that, on a class of diffusions $X$ whose invariant density belongs to the anisotropic Holder class we are considering, it is impossible to find an estimator with a rate of estimation better than $T^{-{ \frac{ {\modch \barfix{\bar{\beta}_3}}}{ 2 {\modch \barfix{\bar{\beta}_3}} + d - 2 }}}$, for the pointwise $L^2$ risk. Comparing the lower bound here above with the upper bound in \eqref{eq: upper intro} we observe that, up to a logarithmic term, the two convergence rates we found are the same.\\

Furthermore, we present some numerical results in dimension $3$. We show that the variance depends only on the biggest bandwidth. The simulations match with the theory and illustrate we can remove the two smallest bandwidths, which are associates to the smallest smoothness. It implies we get a convergence rate which does not depend on the two smallest smoothness $\beta_1$ and $\beta_2$. 

The outline of the paper is the following. In Section \ref{S: model} we introduce the model and we give the assumptions, while in Section \ref{S: upper bound} we propose the kernel estimator for the estimation of the invariant density and we state the upper bound for the mean squared error. In Section \ref{S: minimax} {\modch we complement them with lower bounds for the minimax risk while} in Sections \ref{S: proof upper} and \ref{S: proof minimax} we provide, respectively, the proofs of the upper and lower bounds. Some technical result are moreover proved in Section \ref{S: other proofs}.

\section{Model}{\label{S: model}}
 We consider the question of nonparametric estimation of the invariant density of a d-dimensional diffusion process X, assuming that a continuous record of the process up to time $T$ is available. The diffusion is given as a strong solution of the following stochastic differential equations with jumps:
\begin{equation}
X_t= X_0 + \int_0^t b( X_s)ds + \int_0^t a(X_s)dW_s + \int_0^t \int_{\mathbb{R}^d \backslash \left \{0 \right \} }
\gamma(X_{s^-})z \tilde{\mu}(ds,dz), \quad t \in [0,T], 
\label{eq: model}
\end{equation}
where the coefficients are such that $b : \mathbb{R}^d \rightarrow \mathbb{R}^d$, $a : \mathbb{R}^d \rightarrow \mathbb{R}^d \otimes \mathbb{R}^d$ and $\gamma : \mathbb{R}^d \rightarrow \mathbb{R}^d \otimes \mathbb{R}^d$. The process $W = (W_t, t \ge 0)$ is a d-dimensional Brownian motion and $\mu$ is a Poisson random measure on $[0, T] \times \mathbb{R}^d$ associated to the L\'evy process $L=(L_t)_{t \in [0,T]}$, with $L_t:= \int_0^t \int_{\mathbb{R}^d} z \tilde{\mu} (ds, dz)$. The compensated measure is $\tilde{\mu}= \mu - \bar{\mu}$. We suppose that the compensator has the following form: $\bar{\mu}(dt,dz): = F(dz) dt $, where conditions on the Levy measure $F$ will be given later. \\ 
The initial condition $X_0$, $W$ and $L$ are independent. In the sequel, we will denote $\tilde{a} := a \cdot a^T$.
\subsection{Assumptions}
We want first of all to show an upper bound on the mean squared error, as we will see in detail in Section \ref{S: proof upper}. To do that, we need the following assumptions to hold: \\ \\
\textbf{A1}: \textit{The functions $b(x)$, $\gamma(x)$ and $\tilde{a}(x)$ are globally Lipschitz and, for some $c \ge 1$,
$$c^{-1} \mathbb{I}_{d \times d} \le \tilde{a}(x) \le c \mathbb{I}_{d \times d}, $$
where $\mathbb{I}_{d \times d}$ denotes the $d \times d$ identity matrix. \\
Denoting with $|.|$ and $<., . >$ respectively the Euclidean norm and the scalar product in $\mathbb{R}^d$, we suppose moreover that there exists a constant $c > 0$ such that, $\forall x \in \mathbb{R}^d$, $|b(x)| \le c$.} \\
\\
A1 ensures that equation (\ref{eq: model}) admits a unique non-explosive c\`adl\`ag adapted solution possessing the strong Markov property, cf \cite{Applebaum} (Theorems 6.2.9. and 6.4.6.). \\
\\
\textbf{A2 (Drift condition) }: \textit{ \\
 There exist $C_1 > 0$ and $\tilde{\rho} > 0$ such that $<x, b(x)>\, \le -C_1|x|$, $\forall x : |x| \ge \tilde{\rho}$.
 } \\
\\
In the following A3 are gathered the assumptions on the jumps: \\
\\
\textbf{A3 (Jumps) }: \textit{1.The L\'evy measure $F$ is absolutely continuous with respect to the Lebesgue measure and we denote $F(z) = \frac{F(dz)}{dz}$. \\
2. We suppose that there exist $c > 0$ such that for all $z \in \mathbb{R}^d \backslash \left \{ 0 \right \}$, $F(z) \le \frac{c}{|z|^{d + \alpha}}$, with $\alpha \in (0,2)$ and that $supp(F) = \mathbb{R}^d \backslash \left \{ 0 \right \}$. \\
3. The jump coefficient $\gamma$ is upper bounded, i.e. $\sup_{x \in \mathbb{R}^d}|\gamma(x)| := \gamma_{max} < \infty$. We suppose moreover that there exists a constant $c_1$ such that, $\forall x \in \mathbb{R}^d$, $Det(\gamma(x)) > c_1$.\\
4. If $\alpha =1$, we require for any $0 < r < R < \infty$ $\int_{r <| z |< R} z F(z) dz =0$. \\
5. There exists $\epsilon_0 > 0$ and a constant $\hat{c} > 0$ such that $\int_{\mathbb{R}^d \backslash \left \{ 0 \right \}}|z|^2 e^{\epsilon_0 |z|} F(z) dz \le \hat{c}$.} \\
\\
As showed in Lemma 2 of \cite{Chapitre 4} A2 ensures, together with the last point of A3, the existence of a Lyapunov function, while the second and the third points of A3 involve the irreducibility of the process. The process $X$ admits therefore a unique invariant distribution $\mu$ and the ergodic theorem holds. We assume the invariant probability measure $\mu$ of $X$ being absolutely continuous with respect to the Lebesgue measure and from now on we will denote its density as $\pi$: $ d\mu = \pi dx$. \\
Our goal is to propose an estimator for the invariant density estimation and to study its convergence rate. We start our analysis by introducing the natural estimator in this context and by analysing upper bounds for the mean squared error. Then, we investigate the existence of a lower bound for the minimax risk.

\section{Estimator and upper bound}{\label{S: upper bound}}
In this section we introduce the expression for our estimator of the stationary measure $\pi$ of the stochastic equation with jumps \eqref{eq: model} in an anisotropic context. After that, we present the rate of convergence the estimator achieves, depending on the smoothness of $\pi$. \\
The notion of anisotropy plays an important role. Indeed, the smoothness properties of elements of a function space may depend on the chosen direction of $\mathbb{R}^d$. The Russian school considered anisotropic spaces from the beginning of the theory of function spaces in 1950-1960s (in \cite{Nik} the author takes account of the developments). However, results on minimax rates of convergence in classical statistical models over anisotropic classes were rare for a lot of time. \\
We work under the following anisotropic smoothness constraints.
\begin{definition}
Let $\beta = (\beta_1, ... , \beta_d)$, $\beta_i > 0$, $\mathcal{L} =(\mathcal{L}_1, ... , \mathcal{L}_d)$, $\mathcal{L}_i > 0$. A function $g : \mathbb{R}^d \rightarrow \mathbb{R}$ is said to belong to the anisotropic H{\"o}lder class $\mathcal{H}_d (\beta, \mathcal{L})$ of functions if, for all $i \in \left \{ 1, ... , d \right \}$,
$$\left \| D_i^k g \right \|_\infty \le \mathcal{L}_i \qquad \forall k = 0,1, ... , \lfloor \beta_i \rfloor, $$
$$\left \| D_i^{\lfloor \beta_i \rfloor} g(. + t e_i) - D_i^{\lfloor \beta_i \rfloor} g(.) \right \|_\infty \le \mathcal{L}_i |t|^{\beta_i - \lfloor \beta_i \rfloor} \qquad \forall t \in \mathbb{R},$$
for $D_i^k g$ denoting the $k$-th order partial derivative of $g$ with respect to the $i$-th component, $\lfloor \beta_i \rfloor$ denoting the largest integer strictly smaller than $\beta_i$ and $e_1, ... , e_d$ denoting the canonical basis in $\mathbb{R}^d$.
\label{def: Holder}
\end{definition}
We deal with the estimation of the density $\pi$ belonging to the anisotropic H{\"o}lder class $\mathcal{H}_d (\beta, \mathcal{L})$. Given the observation $X^T$ of a diffusion $X$, solution of \eqref{eq: model}, we propose to estimate the invariant density $\pi$ by means of a kernel estimator.
We introduce some kernel function $K: \mathbb{R} \rightarrow \mathbb{R}$ satisfying 
$$\int_\mathbb{R} K(x) dx = 1, \quad \left \| K \right \|_\infty < \infty, \quad \mbox{supp}(K) \subset [-1, 1], \quad \int_\mathbb{R} K(x) x^l dx = 0,$$
for all $l \in \left \{ {\modch 1}, ... , M \right \}$ with $M \ge \max_i \beta_i$. \\
Denoting by $X_t^j$, $j \in \left \{ 1, ... , d \right \}$ the $j$-th component of $X_t$, $t \ge 0$, a natural estimator of $\pi \in \mathcal{H}_d (\beta, \mathcal{L})$ at $x= (x_1, ... , x_d)^T \in \mathbb{R}^d$ in the anisotropic context is given by 
\begin{equation}
\hat{\pi}_{h,T}(x) = \frac{1}{T \prod_{l = 1}^d h_l} \int_0^T \prod_{m = 1}^d K(\frac{x_m - X_u^m}{h_m}) du = : \frac{1}{T} \int_0^T \mathbb{K}_h(x - X_u) du, 
\label{eq: def estimator}
\end{equation}
where $h = (h_1, ... , h_d)$ is a multi-index bandwidth and it is small. In particular, we assume $h_i< \frac{1}{2}$ for any $i \in \left \{ 1, ... , d\right \}$.\\
The asymptotic behaviour of the estimator relies on the standard bias variance decomposition. Hence, we need an evaluation for the variance of the estimator, as in next proposition. We prove it in Section \ref{S: proof upper}. \\
{\modch One can remark that in \cite{Strauch}, where a continuous reversible diffusion process with unit diffusion is considered, the author formulates implications on the functional inequalities (of Poincar\'e and Nash-type) to get an upper bound for the variance of the estimator. The main advantage in using functional inequalities is that they allow the constants involved in the upper bound of the variance to be controlled uniformly. However, this approach is restricted only to symmetric diffusion framework and so it can not be applied in our setting. To overcome this difficulty we derive some upper bounds on the variance of our estimator by exploiting the mixing properties of $X$. In particular, the proof of the proposition below relies on a bound on the transition density (see Lemma 1 in \cite{Chapitre 4}) and on the exponential ergodicity and the exponential $\beta$-mixing property of the process $X$ (as established in Lemma 2 of \cite{Chapitre 4}). However, this approach has some disadvantages. One above all the fact that, as the upper bounds relies on mixing properties, the constants depend on the coefficients. Hence, it is very challenging to understand how the constants involved can be controlled uniformly and this is still an open question. }

\begin{proposition}
Suppose that A1 - A3 hold. If $\pi$ is bounded and $\hat{\pi}_{h,T}$ is the estimator given in \eqref{eq: def estimator}, then there exists a constant $c$ independent of $T$ such that
\begin{itemize}
    \item[$\bullet$] If $h_1 h_2 < (\prod_{l \ge 3} h_l)^\frac{2}{d-2}$, then
\begin{equation}
Var(\hat{\pi}_{h,T}(x)) \le \frac{c}{T} \frac{\sum_{j = 1}^d |\log(h_j)|}{\prod_{l \ge 3} h_l}.
\label{eq: estim variance with log}
\end{equation}
\item[$\bullet$] If otherwise $h_1 h_2 \ge (\prod_{l \ge 3} h_l)^\frac{2}{d-2}$, then 
\begin{equation}
Var(\hat{\pi}_{h,T}(x)) \le \frac{c}{T} \frac{1}{\prod_{l \ge 3} h_l}.
\label{eq: estim variance without log}
\end{equation}
\end{itemize}
\label{prop: bound variance}
\end{proposition}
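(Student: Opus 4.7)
The plan is to start from the standard variance decomposition for a stationary process and then split the resulting time integral of the covariance into a short-time and a long-time window, treating the two ranges by the heat-kernel bound of Lemma 1 in \cite{Chapitre 4} and by the exponential $\beta$-mixing established in Lemma 2 of \cite{Chapitre 4} respectively. Precisely, by stationarity
\begin{equation*}
Var(\hat{\pi}_{h,T}(x)) = \frac{2}{T^2} \int_0^T (T-r)\, \Gamma(r)\, dr \le \frac{2}{T} \int_0^T |\Gamma(r)|\, dr,
\end{equation*}
with $\Gamma(r) := Cov(\mathbb{K}_h(x-X_0), \mathbb{K}_h(x-X_r))$. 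I would fix a cut-off $D$ of logarithmic size, taking $D$ proportional to $\sum_j |\log h_j|$, and handle $[0,D]$ and $[D,T]$ separately.

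For $r \in [0,D]$ I would bound $|\Gamma(r)| \le \mathbb{E}[\mathbb{K}_h(x-X_0)\mathbb{K}_h(x-X_r)] + \pi(x)^2$, express the expectation through $\pi$ and the transition density, and exploit the product structure of $\mathbb{K}_h$ together with the Gaussian factor appearing in the heat-kernel bound. A coordinate-by-coordinate computation (using $\int K = 1$ when $\sqrt{r}\ge h_l$ and bounding the Gaussian trivially when $\sqrt{r}\le h_l$) gives the sharp estimate
\begin{equation*}
\mathbb{E}[\mathbb{K}_h(x-X_0)\mathbb{K}_h(x-X_r)] \le c \prod_{l=1}^{d} \frac{1}{h_l \vee \sqrt{r}}.
\end{equation*}
Assuming without loss of generality $h_1\le\cdots\le h_d$, I would integrate this piecewise over the dyadic slabs $[h_{l-1}^2, h_l^2]$ and over $[h_d^2, D]$. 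Only the slab $[h_2^2, h_3^2]$ carries an integrand proportional to $1/r$ and hence produces a logarithmic contribution of order $\log(h_3/h_2)/\prod_{l\ge 3} h_l$, while each of the remaining slabs is of order $1/\prod_{l\ge 3} h_l$. Absorbing $\log(h_3/h_2)$ into $\sum_j |\log h_j|$ yields the first case of the proposition. For the second case $h_1 h_2 \ge (\prod_{l\ge 3}h_l)^{2/(d-2)}$, I would instead use the coarser bound $\mathbb{E}[\mathbb{K}_h\mathbb{K}_h(r)] \le c\,\min(r^{-d/2}, (\prod_l h_l)^{-1})$, whose time integral equals $c(\prod_l h_l)^{(2-d)/d}$; a short algebraic check shows that under that condition this quantity is already dominated by $c/\prod_{l\ge 3}h_l$, and no logarithm arises.

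For $r \in [D,T]$ the covariance inequality combined with exponential $\beta$-mixing gives $|\Gamma(r)| \le 4\|\mathbb{K}_h\|_\infty^2 \beta(r) \le c (\prod_l h_l)^{-2} e^{-\lambda r}$, so that $\int_D^T |\Gamma(r)|\, dr \le c(\prod_l h_l)^{-2} e^{-\lambda D}$; the choice of $D$ proportional to $\sum_j|\log h_j|$ makes this quantity of smaller order than $1/\prod_{l\ge 3} h_l$ and it is absorbed into the short-time estimate. Dividing by $T$ produces the two bounds asserted in the statement.

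The principal obstacle is, in my view, the anisotropic slab-by-slab evaluation of the short-time integral: one must verify carefully that the only slab generating a logarithm is $[h_2^2, h_3^2]$ and that all other contributions, including the large-$r$ tail from $[h_d^2, D]$, are genuinely of order $1/\prod_{l\ge 3} h_l$. A secondary technical point is the role of the jump component in the heat-kernel bound: the diffusive Gaussian piece yields the estimate sketched above, and one has to argue that the pure-jump correction in Lemma 1 of \cite{Chapitre 4}, controlled via the tail assumption on $F$ in A3, does not spoil the anisotropic scaling once integrated against the compactly supported kernel $K$.
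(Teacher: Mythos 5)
Your strategy matches the paper's in its essential ingredients: the decomposition $Var(\hat{\pi}_{h,T}(x))\le \frac{2}{T}\int_0^T|\Gamma(r)|\,dr$, the heat-kernel bound of Lemma~1 of \cite{Chapitre 4} for small $r$, the exponential $\beta$-mixing of Lemma~2 of \cite{Chapitre 4} for $r\ge D$, and a logarithmic cut-off $D$. Where you differ is in the organization of the short-time piece: the paper splits $[0,D]$ further into $[0,\delta_1)\cup[\delta_1,\delta_2)\cup[\delta_2,D)$ with $\delta_1=h_1h_2$, $\delta_2=(\prod_{l\ge3}h_l)^{2/(d-2)}$, and on each subinterval removes a \emph{fixed} number (0, 2, or $d$) of bandwidths from the Gaussian factor, whereas you derive the unified coordinate-wise bound $\prod_l(h_l\vee\sqrt r)^{-1}$ and then integrate slab by slab over $[h_{l-1}^2,h_l^2]$. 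Your organization is cleaner and makes it transparent why exactly one slab, $[h_2^2,h_3^2]$, produces the logarithm; the verifications you sketch (that the other slabs and the tail $[h_d^2,D]$ give $O(1/\prod_{l\ge3}h_l)$, and that $(\prod_lh_l)^{(2-d)/d}\le c/\prod_{l\ge3}h_l$ under the condition of the second case) are all correct.

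The genuine gap is exactly the one you flag at the end: the product bound $\prod_l(h_l\vee\sqrt r)^{-1}$ is a Gaussian-heat-kernel computation, and the jump correction $p_r^J(y,y')=c\,r/(r^{1/2}+|y-y'|)^{d+\alpha}$ in Lemma 1 of \cite{Chapitre 4} does not factorize over coordinates, so it cannot simply be absorbed into the same product estimate. The paper handles it by first majorizing $p_r^J$ by the product $c\,r\prod_l(r^{1/2}+|y_l-y'_l|)^{-(1+\alpha/d)}$ and then running the same kernel-versus-tail trade-off coordinate by coordinate; removing two coordinates gives $k_2(r)\le c\,r^{-\alpha/2}/\prod_{l\ge3}h_l$, which is integrable at $0$ because $\alpha<2$ and is therefore negligible against the logarithm coming from $k_1$. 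Your sketch would need this extra step, but it meshes with your slab decomposition: on the slab where $k$ coordinates satisfy $h_l\le\sqrt r$ one gets the Gaussian contribution times an extra factor $r^{1-\alpha/2}$, so the jump part never dominates. A second, smaller, omission: the pointwise bound on $p_r$ from Lemma 1 of \cite{Chapitre 4} is only valid for $r\le1$, while your cut-off $D\asymp\sum_j|\log h_j|$ exceeds $1$; the paper covers $r\in[1,D)$ by the crude estimate $p_r\le c$, which contributes a term of order $D$ to the integral and is then absorbed. Once those two points are filled in, your argument is complete.
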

We underline that, in the upper bound of the variance here above, it would have been possible to remove, in the denominator, the contribution of no matter which two bandwidths. We arbitrarily choose to remove the contribution of $h_1$ and $h_2$ as in the bias term they are associated to $\beta_1$ and $\beta_2$, which are the smallest values of smoothness (see Theorem \ref{th: upper bound} below) and so they provide the strongest constraints. \\
\\
{\modch One may wonder about the origin of the logarithmic term in the upper bound \eqref{eq: estim variance with log}. We will see in the proof of Proposition \ref{prop: bound variance} that it is possible to estimate the absolute value of the covariance 
$$k(s) := Cov(\mathbb{K}_h(x - X_0), \mathbb{K}_h(x - X_s) )$$
with $\frac{1}{\prod_{j \ge l + 1} h_j} \frac{1}{s^{\frac{l}{2}}}$ for any $l \in \{ 0, ... , d \}$. Then, we will need to integrate such term over the time $s$. When $s$ is small the best choice consists in taking $l = 0$, while for $s$ far away from the neighbourhood of $0$ is convenient to take $l= d$. As we will see in the proof of Proposition \ref{prop: bound variance}, it is possible to make the bound on the variance smaller by considering also the case that stands in between, for which $s$ is not zero but can be arbitrarily small. Here the best choice is to take $l= 2$, which provides the logarithm as in \eqref{eq: estim variance with log}. } \\
\\
{\modch To better understand how to choose the bandwidths whose contributions we will remove, let us see more in detail what happens for $d = 3$. In this case, the two strongest constraints are connected to the two smallest bandwidths and so we arbitrarily decide to remove their contributions. It follows that the upper bound on the variance will depend only the largest bandwidth between $h_1$, $h_2$ and $h_3$, up to a logarithmic term.} In particular, for $d= 3$, equation \eqref{eq: estim variance with log} in Proposition \ref{prop: bound variance} becomes
\begin{equation}
Var(\hat{\pi}_{h,T}(x)) \le \frac{c \, \sup_{j =1, 2, 3} |\log(h_j)|}{T}  \inf(\frac{1}{h_1}, \frac{1}{h_2}, \frac{1}{h_3} ), \qquad \mbox{for } h_1 h_2 < h_3^2.
\label{eq: var dim 3 con log}
\end{equation}
On the other side, when $h_1 h_2 \ge h_3^2$, we have  
\begin{equation}
Var(\hat{\pi}_{h,T}(x)) \le \frac{c}{T}  \inf(\frac{1}{h_1}, \frac{1}{h_2}, \frac{1}{h_3} ).
\label{eq: var dim 3 senza log}
\end{equation}

{\modch
As this final result is quite surprising, we decide to support it by presenting some simulations. Our goal is to illustrate that the variance will depend only on the largest bandwidth.
We consider the process $X$ solution of
$$X_t= X_0 + \int_0^t b(X_s) ds + W_t + \int_0^t \int_{\mathbb{R}^3 \backslash \left \{0 \right \}}  z \, \tilde{\mu}(ds, dz), $$
with $b(x) = - 4 \frac{x}{|x|} e^{- \frac{1}{4 |x| - 1}} 1_{|x| > \frac{1}{4}}$. The Brownian motion has variance $I_3$ and the jump process is a compound Poisson, with intensity $1$ and Gaussian jump law $N(0, I_3)$.
We evaluate the variance of the kernel estimator for different values of the bandwidth $h_1$, $h_2$ and $h_3$ over the interval $[0, T]$, where we choose $T= 100$. The process is simulated by an Euler scheme with discretization step $\Delta_n = 10^{- 7}$ and the integral in the definition of the kernel estimator is replaced by a Riemann sum whose discretization step is once again $10^{-7}$. We use a Monte Carlo method based on 2000 replications and we provide a $3$d graphic, in which on the $x$ and $y$-axis there are respectively the values of $\log_{10}(h_1)$ and $\log_{10}(h_2)$ while on the $z$-axis there is the value of $\log_{10} (Var(\hat{\pi}_{h,T}(x)))$. The idea is to fix $h_3$ bigger than $h_1$ and $h_2$ and to see how the variance of our estimator changes in function of $h_1$ and $h_2$, in a logarithmic scale. 

In particular, we ta take $h_3 = 10^{-0,5}$ and $h_1$ and $h_2$ that belong to $[10^{-2}, 10^{-3.4}]$. Therefore, $h_3$ is much larger than the other bandwidths and so the variance of the estimator should be, according with our results, dependent only on $h_3$. In particular, as it is $h_1 h_2 < h_3^2$, from \eqref{eq: var dim 3 con log} we obtain the theoretical variance is upper bounded by $ \frac{c}{T}\frac{ |\log(h_3)|}{h_3}$.

\begin{figure}[ht]
	\centering
		\includegraphics[scale=0.7]{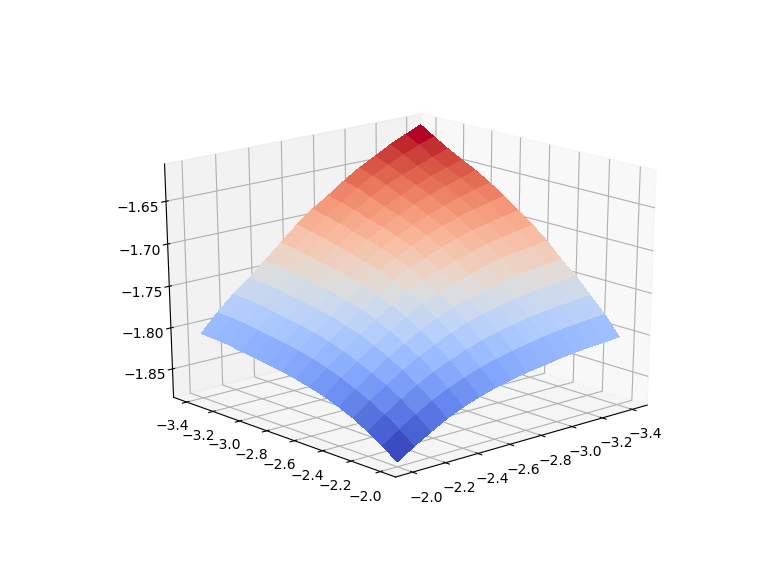}
		\caption{3d graphic for $\log_{10} (Var(\hat{\pi}_{h,T}(x)))$ when $h_3$ is big.}
\label{Fig:3d h3 big}
\end{figure}

Even if the 3d graphic reported in Figure \ref{Fig:3d h3 big} does not seem to represent a completely constant function, one can easily see looking at the $z$-axis that the variance is remotely dependent on $h_1$ and $h_2$. The minimal variance, indeed, is achieved for $h_1 = h_2 = 10^{-2}$ and its value is $10^{-1.88}$ while its maximal value is $10^{-1.61}$ and it is achieved for $h_1 = h_2 = 10^{-3.4}$. It means that the variance varies a little: for the kernel bandwidths which move from $(h_1, h_2, h_3) = (10^{-2}, 10^{-2}, 10^{-0.5}) $ to $(h_1, h_2, h_3) = (10^{-3.4}, 10^{-3.4}, 10^{-0.5}) $, the volume of the Kernel support is divided by $10
^{1.4} \times 10^{1.4} \simeq 631$ while the variance of the estimator is just multiplied by $10^{0.27} \simeq 1.86$.

Another evidence of the dependence of the variance of the estimator only on $h_3$ is given by Figure \ref{Fig:curve h3 big} below. 
To better understand the graphic below, we underline that the orange and blue curves correspond to the two edges, respectively. In particular, the orange curve corresponds to the variation of the variance for $h_1 = 10^{-0,5}$ fixed and $h_2$ which shifts from $10^{-0.5}$ to $10^{-2.4}$, while the blue curve represents the variation of the variance when $h_2$ is fixed equal to $ 10^{-0,5}$ and $h_1$ goes from $10^{-0.5}$ to $10^{-2.4}$. \\
The green curves corresponds to the diagonal of the 3d graphic and so it represents the variance of the estimator when $h_1=h_2$ moves from $10^{-0.5}$ to $10^{-2.4}$.
We start discussing the behaviour of the green curve. According with the theory we know the variance should not be dependent on $h_1 = h_2$. Therefore, the derivative of the log-variance function with respect to $\log_{10}(h_1)= \log_{10} (h_2)$ should be null.
The numerical results match with the theoretical ones, as the slope of the diagonal is quite weak, being equal to $-0.186$. \\
Regarding the edge curves, one can easily remark that the results provided by Figure \ref{Fig:curve h3 big} match with the theoretical results. The two edge curves are indeed totally flat: their slopes are $-0.048$ and $-0.051$. 

\begin{figure}[ht]
	\centering
		\includegraphics[scale=0.8]{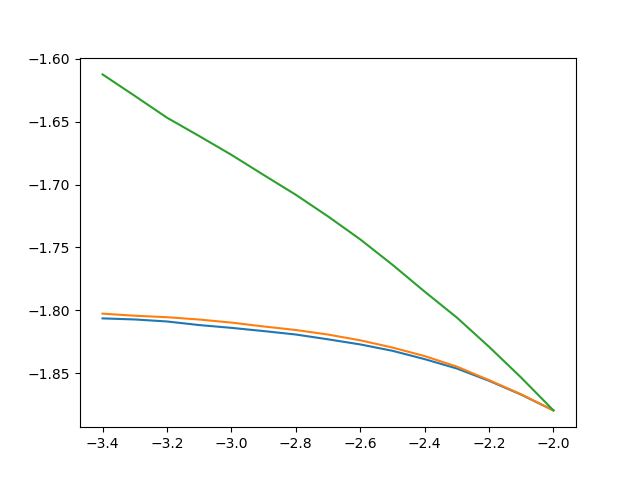}
		\caption{Slope representation for the edge curves and diagonal curve when $h_3$ is big.}
\label{Fig:curve h3 big}
\end{figure}
}

{ \modch Based on the upper bounded on the variance found in Proposition \ref{prop: bound variance} discussed above, we can now state the main result on the asymptotic behaviour of the estimator.} Its proof can be found in 
Section \ref{S: proof upper}.
\begin{theorem}
Suppose that A1 - A3 hold. If $\pi \in \mathcal{H}_d (\beta, \mathcal{L})$, then the estimator given in \eqref{eq: def estimator} satisfies, for $d \ge 3$, the following risk estimates:
\begin{equation}
\mathbb{E}[|\hat{\pi}_{h,T}(x) - \pi (x)|^2] \underset{\sim}{<} \sum_{l = 1}^d h_l^{2\beta_l} + \frac{1}{T} \frac{\sum_{j = 1}^d |\log(h_j)|}{\prod_{l \ge 3} h_l}.
\label{eq: MSQ with log}
\end{equation}
Taken $\beta_1 \le \beta_2 \le ... \le \beta_d $ and defined $\frac{1}{{\modch \barfix{\bar{\beta}_3}}} := \frac{1}{d -2} \sum_{l \ge 3} \frac{1}{\beta_l}$, the rate optimal choice for the bandwidth $h$ provided in \eqref{eq: scelta al} and \eqref{eq: scelta a1 a2} below
yields the convergence rate
$$\mathbb{E}[|\hat{\pi}_{h,T}(x) - \pi (x)|^2] \underset{\sim}{<} (\frac{\log T}{ T})^{ \frac{2{\modch \barfix{\bar{\beta}_3}}}{2{\modch \barfix{\bar{\beta}_3}} + d - 2}}. $$
Moreover, in the isotropic context $\beta_1 = \beta_2 = ... = \beta_d =: \beta$, the following convergence rate holds true:
\begin{equation}
\mathbb{E}[|\hat{\pi}_{h,T}(x) - \pi (x)|^2] \underset{\sim}{<} (\frac{1}{T})^{\frac{2 \beta}{2 \beta + d - 2}}.
\label{eq: MSQ without log}
\end{equation}
\label{th: upper bound}
\end{theorem}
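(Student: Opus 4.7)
The plan is to follow the classical bias-variance decomposition
\[
\mathbb{E}[|\hat{\pi}_{h,T}(x) - \pi(x)|^2] = \bigl(\mathbb{E}[\hat{\pi}_{h,T}(x)] - \pi(x)\bigr)^2 + \mathrm{Var}(\hat{\pi}_{h,T}(x)),
\]
and to combine a bias bound of size $\sum_{l=1}^d h_l^{\beta_l}$ with the variance bound of Proposition~\ref{prop: bound variance} to obtain \eqref{eq: MSQ with log}. The optimal anisotropic rate will then follow by tuning $h$. The only analytically non-trivial step is the bias; the rest is exponent bookkeeping.

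First I would handle the bias. Under stationarity ($X_0 \sim \pi$) one has $\mathbb{E}[\mathbb{K}_h(x - X_u)] = \int \mathbb{K}_h(x-y)\pi(y)\,dy$ for every $u$, and if the initial law differs from $\pi$ the exponential $\beta$-mixing from Lemma~2 of \cite{Chapitre 4} absorbs the transient correction at an exponentially small cost in $T$. After the change of variables $u_m = (x_m - y_m)/h_m$ the bias reads
\[
\int_{[-1,1]^d} \prod_{m=1}^d K(u_m)\bigl[\pi(x_1 - h_1 u_1,\ldots,x_d - h_d u_d) - \pi(x)\bigr]\,du.
\]
I would then telescope this difference coordinate by coordinate, writing $\pi(x_1 - h_1 u_1,\ldots,x_d - h_d u_d) - \pi(x)$ as a sum of $d$ one-dimensional increments each involving only a single shifted coordinate, and handle the $l$-th increment by a Taylor expansion of order $\lfloor \beta_l \rfloor$ in the $l$-th variable. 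Because $K$ kills the moments $1,\ldots,M$ with $M \ge \max_i \beta_i$, the polynomial part of the expansion drops out after integration, and the remainder is controlled by $\|D_l^{\lfloor\beta_l\rfloor}\pi(\cdot + te_l) - D_l^{\lfloor\beta_l\rfloor}\pi(\cdot)\|_\infty$; the anisotropic Hölder condition of Definition~\ref{def: Holder} then bounds this remainder by a constant times $h_l^{\beta_l}$. Summing over $l$ and squaring yields $\bigl(\mathbb{E}[\hat{\pi}_{h,T}(x)] - \pi(x)\bigr)^2 \lesssim \sum_{l=1}^d h_l^{2\beta_l}$, which, combined with \eqref{eq: estim variance with log}, gives \eqref{eq: MSQ with log}.

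To extract the rate I would set $h_l = (\log T / T)^{\gamma/\beta_l}$ for every $l$, with $\gamma := \bar{\beta}_3/(2\bar{\beta}_3 + d - 2)$. Each bias term $h_l^{2\beta_l}$ then equals $(\log T/T)^{2\gamma}$, while the variance in \eqref{eq: estim variance with log} is of order $(\log T/T)^{1 - \gamma(d-2)/\bar{\beta}_3}$; the defining identity $\gamma(2 + (d-2)/\bar{\beta}_3) = 1$ makes the two exponents coincide, producing the announced rate $(\log T/T)^{2\bar{\beta}_3/(2\bar{\beta}_3+d-2)}$. The admissibility condition $h_1 h_2 < (\prod_{l \ge 3} h_l)^{2/(d-2)}$ required by \eqref{eq: estim variance with log} is automatic from the ordering $\beta_1 \le \cdots \le \beta_d$, since $\tfrac{1}{2}(1/\beta_1 + 1/\beta_2) \ge \tfrac{1}{d-2}\sum_{l \ge 3} 1/\beta_l = 1/\bar{\beta}_3$. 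In the isotropic case $\beta_l \equiv \beta$, I would instead take $h_l \equiv T^{-1/(2\beta + d - 2)}$: here $h_1 h_2 = (\prod_{l \ge 3} h_l)^{2/(d-2)}$, so \eqref{eq: estim variance without log} applies, no logarithmic factor appears, and the direct computation gives \eqref{eq: MSQ without log}.

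The main obstacle will be the bias step: organizing the coordinate-by-coordinate telescoping so that each one-dimensional remainder exploits precisely the Hölder smoothness in its own direction (and no other), and so that the vanishing-moment property of the one-dimensional factors of $\mathbb{K}_h$ can be applied direction by direction, requires some care with intermediate remainders. Once this is set up cleanly, the bandwidth optimization and the matching with Proposition~\ref{prop: bound variance} are a routine exponent computation.
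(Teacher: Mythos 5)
Your proposal follows the same route as the paper: bias--variance decomposition, the standard anisotropic Taylor/telescoping bound $|\mathbb{E}[\hat{\pi}_{h,T}(x)]-\pi(x)|^2 \lesssim \sum_l h_l^{2\beta_l}$ (the paper cites \cite{Chapitre 4} for this step rather than re-deriving it), and then balancing exponents against the variance estimate of Proposition~\ref{prop: bound variance}. The single noteworthy difference is the treatment of $h_1, h_2$: you take the saturating exponents $a_l = \gamma/\beta_l$ for \emph{all} $l$, whereas the paper deliberately takes $a_1, a_2$ \emph{strictly larger} than $\gamma/\beta_1, \gamma/\beta_2$ (see \eqref{eq: scelta a1 a2}) so that the corresponding bias terms are asymptotically negligible rather than merely of target order; both choices produce the stated rate, and since both render $h_1, h_2$ the two smallest bandwidths, the admissibility condition $h_1 h_2 < (\prod_{l\ge3} h_l)^{2/(d-2)}$ holds with strict inequality precisely when the smoothness is not fully isotropic (your appeal to $\tfrac12(1/\beta_1+1/\beta_2)\ge 1/\bar{\beta}_3$ is correct, with equality if and only if $\beta_1=\cdots=\beta_d$, and you correctly switch to the log-free bound \eqref{eq: estim variance without log} in that degenerate case). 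In short: same decomposition, same key lemma, same exponent bookkeeping; the only divergence is a cosmetic choice of how much to over-smooth in the first two coordinates.
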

We recall that in \cite{Chapitre 4}, under the same assumptions, the following convergence rate has been found for the pointwise estimation of the invariant density for $d \ge 3$:
\begin{equation}
\mathbb{E} [|\hat{\pi}_{h,T}(x) - \pi (x)|^2] \underset{\sim}{<}
T^{- \frac{2 {\modch \barfix{\bar{\beta}}}}{2{\modch \barfix{\bar{\beta}}}+ d - 2}},
\label{eq: upper bound}
\end{equation}
where $\bar{\beta}$ is the harmonic mean smoothness of the invariant density over the $d$ different dimensions, such that 
$$\frac{1}{{\modch \barfix{\bar{\beta}}}} := \frac{1}{d} \sum_{l = 1}^d \frac{1}{\beta_l}.$$
We remark that the rate in \eqref{eq: upper bound} for $d \ge 3$ is the same Strauch found in \cite{Strauch} in absence of jumps, which is also the rate gathered in the isotropic context proposed in \cite{RD}, up to replacing the mean smoothness $\bar{\beta}$ with $\beta$, the common smoothness over the $d$ dimensions, as we did in \eqref{eq: MSQ without log}.\\
By construction, $\bar{\beta}_3$ is bigger than $\bar{\beta}$ and, therefore, the upper bound found in Theorem \ref{th: upper bound} is faster than the one reported in \eqref{eq: upper bound} in a general anisotropic context. \\
Now the following two questions arise. Can we improve the rate by using other density estimators? What is the best possible rate of convergence? 
To answer these questions it is useful to consider the
minimax risk $\mathcal{R}_T(\beta, \mathcal{L})$ associated to the anisotropic Holder class $\mathcal{H}_d (\beta, \mathcal{L})$ we defined in Definition \ref{def: Holder}, as we are going to explain in Section \ref{S: minimax}.

\section{Lower bounds}{\label{S: minimax}}
In this section, we wonder if it is possible to construct any estimator with a rate better than the one obtained in Theorem \ref{th: upper bound}. \\
For the computation of lower bounds, we introduce the following stochastic differential equation with jumps:
\begin{equation}
X_t= X_0 + \int_0^t b(X_s) ds + \int_0^t a \, dW_s + \int_0^t \int_{\mathbb{R}^d \backslash \left \{0 \right \}} \gamma \, \, z \, \tilde{\mu}(ds, dz),
\label{eq: model lower bound}
\end{equation}
where $a$ and $\gamma$ are constants, $\gamma$ is also invertible and $b$ is a Lipschitz and bounded function. We assume that the jump measure satisfies the conditions gathered in points 1,2,4 and 5 of A3. We moreover suppose that there exists $\lambda_1$ such that
$$\int_{\mathbb{R}^d} F(z) dz = \lambda_1 < \infty$$
and, $\forall i \in \left \{ 1, ... , d \right \}$,
$$|\sum_{j \neq i} (a a^T)_{i j}(a a^T)^{-1}_{j j} | \le \frac{1}{2}.$$
We underline that if the matrix $a \cdot a^T$ is diagonal, then the request here above is always satisfied. If it is not the case, such an assumption implies that the diagonal terms dominate on the others. \\
As the model satisfies A1, we know that the stochastic differential equation with jumps \eqref{eq: model lower bound} admits a solution. Moreover, as $\gamma$ is inversible, A3 is automatically true. If A2 also holds we know from Lemma 2 in \cite{Chapitre 4} that the process admits a unique stationary measure, that we note $\pi_b$. We omit in the notations the dependence on $a$ and $\gamma$ as they will be fixed in the sequel, while the connection between $b$ and $\pi_b$ will be made explicit in Section \ref{S: explicit link}. \\
If the invariant measure exists, we denote as $\mathbb{P}_b$ the law of a stationary solution $(X_t)_{t \ge 0}$ of \eqref{eq: model lower bound} and we note be $\mathbb{E}_b$ the corresponding expectation. Moreover we will note by $\mathbb{P}_b^{(T)}$ the law of $(X_t)_{t \in [0, T]}$, solution of \eqref{eq: model lower bound}.
\\
In order to write down an expression for the minimax risk of estimation, we have to consider a set of solutions to the equation \eqref{eq: model lower bound} which are stationary and whose stationary measure has the prescribed Holder regularity introduced in Definition \ref{def: Holder}. It leads us to the following definition.

\begin{definition}
Let $\beta = (\beta_1, ... , \beta_d)$, $\beta_i > 1$ and $\mathcal{L} =(\mathcal{L}_1, ... , \mathcal{L}_d)$, $\mathcal{L}_i > 0$. We define $\Sigma (\beta, \mathcal{L})$ the set of the Lipschitz and bounded functions $b: \mathbb{R}^d \rightarrow \mathbb{R}^d$ satisfying A2 and for which the density $\pi_b$ of the invariant measure associated to the stochastic differential equation \eqref{eq: model lower bound} belongs to $\mathcal{H}_d (\beta, 2 \mathcal{L})$.
\label{def: insieme sigma}
\end{definition}
We introduce the minimax risk for the estimation at some point. Let $x_0 \in \mathbb{R}^d$ and $\Sigma (\beta, \mathcal{L})$ as in Definition \ref{def: insieme sigma} here above. We define the minimax risk 
\begin{equation}
\mathcal{R}_T (\beta, \mathcal{L}) := \inf_{\tilde{\pi}_T} \sup_{b \in \Sigma (\beta, \mathcal{L})} \mathbb{E}_b^{(T)}[(\tilde{\pi}_T (x_0) - \pi_b (x_0))^2],
\label{eq: def minimax risk}
\end{equation}
where the infimum is taken on all possible estimators of the invariant density. Our main result is a lower bound for the minimax risk here above defined. {\modch The proof is based on the two hypotheses method, explained for example in Section 2.3 of \cite{Ts}. }

\begin{theorem}
There exists $c > 0$ such that, if $\hat{c} < c$ (recall: $\hat{c}$ is defined in the fifth point of A3), then
$${ \modch \mathcal{R}_T (\beta, \mathcal{L}) } \ge c\, T^{-{ \frac{ 2{\modch \barfix{\bar{\beta}_3}}}{ 2 {\modch \barfix{\bar{\beta}_3}} + d - 2 }}},$$
where we recall it is $\beta_1 \le \beta_2 \le ... \le \beta_d$ and $\frac{1}{{\modch \barfix{\bar{\beta}_3}}} := \frac{1}{d-2} \sum_{l \ge 3} \frac{1}{\beta_l}. $
\label{th: lower bound}
\end{theorem}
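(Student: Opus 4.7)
\textit{Strategy.} I would use Tsybakov's two-hypothesis method (Theorem~2.2 in \cite{Ts}): with the target rate $\psi_T:=T^{-\bar\beta_3/(2\bar\beta_3+d-2)}$, it suffices to exhibit $b_0,b_1\in\Sigma(\beta,\mathcal L)$ whose associated invariant densities $\pi_0,\pi_1$ satisfy
\[
|\pi_1(x_0)-\pi_0(x_0)|\ge 2c\,\psi_T \qquad\text{and}\qquad \mathrm{KL}\!\left(\mathbb P^{(T)}_{b_1}\,\|\,\mathbb P^{(T)}_{b_0}\right)\le\alpha<\infty,
\]
for then Tsybakov's lemma delivers $\mathcal R_T(\beta,\mathcal L)\gtrsim\psi_T^{\,2}$.

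\textit{Construction of the hypotheses.} Fix a smooth, positive, coercive reference density $\pi_0$ produced by some $b_0\in\Sigma(\beta,\mathcal L)$ (e.g.\ a Gaussian-like profile at $x_0$). After the reordering $\beta_1\le\cdots\le\beta_d$, choose the anisotropic bandwidths
\[
h_1=h_2=h_\star \;\;(\text{a small fixed constant}),\qquad h_\ell=\psi_T^{\,1/\beta_\ell}\;\;(\ell\ge 3),
\]
pick $\varphi_1,\ldots,\varphi_d\in C_c^\infty(\mathbb R)$ with $\varphi_\ell(0)=1$ and $\int\varphi_1=0$, and set
\[
B(x):=\psi_T\prod_{\ell=1}^{d}\varphi_\ell\!\left(\frac{x_\ell-x_{0,\ell}}{h_\ell}\right), \qquad \pi_1:=\pi_0+B.
\]
Direct inspection shows $B(x_0)\asymp\psi_T$ and $\int B=0$, so $\pi_1$ is a probability density; one also verifies $\pi_1\in\mathcal H_d(\beta,2\mathcal L)$, because for $\ell\ge 3$ the relevant norms give $\psi_T/h_\ell^{\beta_\ell}=C$, while along the two roughest directions $\ell=1,2$ all relevant norms are $O(\psi_T)\to 0$, so the Hölder constraints there are not binding. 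The drift $b_1$ is then recovered from $\pi_1$ via Proposition~\ref{prop: b g}, which inverts the stationary Fokker--Planck equation $A_{b_1}^*\pi_1=0$,
\[
\nabla\cdot(b_1\pi_1)=\tfrac12\mathrm{Tr}(aa^\top\nabla^2\pi_1)+J(\pi_1), \qquad J(\varphi)(x):=\int(\varphi(x-\gamma z)-\varphi(x)+\gamma z\cdot\nabla\varphi(x))\,F(dz).
\]
The diagonal-dominance hypothesis on $aa^\top$ stated just before the theorem is what makes this inversion well-posed, and the resulting $b_1$ is Lipschitz, bounded, and inherits~A2 from $b_0$ because $B$ is compactly supported; hence $b_1\in\Sigma(\beta,\mathcal L)$.

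\textit{Kullback--Leibler bound and main obstacle.} As $a,\gamma,F$ are common to $\mathbb P_{b_0}$ and $\mathbb P_{b_1}$, Girsanov in the stationary regime yields
\[
\mathrm{KL}\!\left(\mathbb P^{(T)}_{b_1}\,\|\,\mathbb P^{(T)}_{b_0}\right)=\tfrac{T}{2}\,\mathbb E_{\pi_1}\!\left[(b_1-b_0)^\top(aa^\top)^{-1}(b_1-b_0)\right]\le C\,T\,\|b_1-b_0\|_{L^2(\pi_1)}^2.
\]
Subtracting the Fokker--Planck equations for $(b_1,\pi_1)$ and $(b_0,\pi_0)$, the increment $V:=b_1\pi_1-b_0\pi_0$ satisfies $\nabla\cdot V=\tfrac12\mathrm{Tr}(aa^\top\nabla^2 B)+J(B)$, and the explicit formula from Proposition~\ref{prop: b g} selects one particular solution of this divergence equation whose $L^2$-norm is dominated by a \emph{single} directional derivative of $B$ (plus a jump remainder of order $\hat c$). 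Since $h_1=h_\star$ is fixed,
\[
\|\partial_1 B\|_{L^2}^2\;\asymp\;\psi_T^{\,2}\prod_{\ell\ge 3}h_\ell\;=\;\psi_T^{\,2+(d-2)/\bar\beta_3}\;=\;C/T
\]
exactly by the definition of $\psi_T$; the smallness assumption $\hat c<c$ absorbs the jump remainder, so $\mathrm{KL}\le\alpha$ and Tsybakov's lemma concludes. The main obstacle is Proposition~\ref{prop: b g} itself: the non-symmetric, non-local operator $A^*$ must be inverted explicitly with enough control that the resulting $b_1$ lies in $\Sigma(\beta,\mathcal L)$ and, crucially, that the $L^2$-bound on $b_1-b_0$ involves only one directional derivative of $B$. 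This asymmetric inversion, which has no analogue in the reversible continuous case $b=\tfrac12\nabla\log\pi$, is precisely what permits the two roughest smoothness indices $\beta_1,\beta_2$ to drop out of the rate.
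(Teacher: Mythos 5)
Your strategy is the one the paper follows --- Tsybakov's two-hypothesis scheme, a multiplicative exponential-type base density $\pi_0$, a bump $\pi_1=\pi_0+B$ with anisotropic bandwidths, the inversion of $A^*_b\pi_b=0$ via Proposition~\ref{prop: b g}, and a Girsanov/KL control that reduces to $T\,\|b_{\pi_1}-b_{\pi_0}\|^2_{L^2(\pi_0)}$. But there is a genuine flaw in the bandwidth calibration, and the intuition used to justify it is incorrect.

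You set $h_1=h_2=h_\star$ to a \emph{fixed} small constant, and you justify this by asserting that the $L^2$-norm of $b_{\pi_1}-b_{\pi_0}$ is dominated by a single directional derivative, $\|\partial_1 B\|_{L^2}$. This is not what the explicit inversion yields. From \eqref{eq: def b sol A*}, the $i$-th component of the drift difference on $K_T$ involves $\sum_j (a\,a^T)_{ij}\partial_j d_T$ (plus the jump remainder), and Proposition~\ref{prop: differenza drift function} (point~3) gives the bound $|b^i_{\pi_1}-b^i_{\pi_0}|\lesssim \frac{1}{M_T}\sum_{j=1}^d \frac{1}{h_j}$ for \emph{each} $i$. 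Squaring and integrating over the bump (volume $\prod_l h_l$) produces the constraint
\[
T\,\frac{1}{M_T^2}\Bigl(\prod_l h_l\Bigr)\Bigl(\sum_j \frac{1}{h_j^2}\Bigr)\le c,
\]
and the dominant term in $\sum_j 1/h_j^2$ is governed by the \emph{smallest} bandwidth. With your choice $h_1=h_2=h_\star$ fixed and $h_\ell=\psi_T^{1/\beta_\ell}\to 0$ for $\ell\ge 3$, that smallest bandwidth is $h_3(T)$, and the constraint becomes
\[
T\,\frac{1}{M_T^2}\,h_\star^2\,\frac{\prod_{l\ge 3}h_l}{h_3^2}\;=\;\frac{h_\star^2}{h_3^2}\cdot\Bigl(T\,\psi_T^2\prod_{l\ge 3}h_l\Bigr)\;=\;\frac{h_\star^2}{h_3(T)^2}\longrightarrow\infty,
\]
so the Kullback--Leibler divergence is not bounded and Tsybakov's lemma does not apply. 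Your identity $\|\partial_1 B\|_{L^2}^2 \asymp 1/T$ is correct but irrelevant: the KL bound requires the whole sum $\sum_i \|\partial_i B\|_{L^2}^2$, and with your calibration the $i=3$ term diverges relative to $1/T$.

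The paper's construction avoids exactly this. It saturates $1/M_T=h_j^{\beta_j}$ for all $j\neq 1$ and then takes $h_1(T)=h_2(T)=(1/M_T)^{1/\beta_2}\to 0$, so the smallest bandwidth is $h_1=h_2$ and the cancellation $(\prod_l h_l)\cdot(1/h_1^2)=\prod_{l\ge 3} h_l$ (because $h_1=h_2$) collapses the constraint to $T\,M_T^{-2}\prod_{l\ge 3}h_l\le c$, which yields $1/M_T=T^{-\bar\beta_3/(2\bar\beta_3+d-2)}$. The indices $\beta_1,\beta_2$ disappear from the rate because of this $h_1=h_2$ degeneracy, not because the inversion of $A^*_b$ ``selects a single directional derivative.'' If you keep the two-hypothesis skeleton but replace $h_\star$ by $(1/M_T)^{1/\beta_2}$, the rest of your argument goes through and matches the paper's proof.
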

The condition on $\hat{c}$ follows from the fact that, in our approach, the jumps have to be not too big. In this way it is possible to build ergodic processes where, in the analysis of the link between the invariant measure and the drift function, the continuous part of the generator dominates (see Lemma \ref{lemma: pi0 soddisfa Ad}). 

{\modch Regarding the choice of the model, it is worth noticing that our framework does not allow to consider continuous processes as well as jump diffusions simultaneously, as we need the coefficients to be always different from zero to get the mixing properties of our process. Hence, we choose to take into account the case where we have an additional information: we do have the jumps. In particular, we are looking for a lower bound on a class of processes where we know that the jumps really occurred, which is truly challenging. It is interesting to remark that it is possible to follow the schema provided in Section \ref{S: proof minimax} also when one aims at finding a lower bound on a class of continuous diffusion processes. The main difference would be the absence of the discrete part of the generator $A_d$, which would implies the absence of its adjoint $A^*_{d,i}$ in the definition of the coordinates of $b$ (see Equation \eqref{eq: def b sol A*}). As we will see, in the construction of the priors, the idea will be to provide a first density with the prescribed regularity and then to give the second as the first plus a bump. As we will need to consider the drifts associated to the built priors, we will need to evaluate the adjoint of the generator of the process in the bump. The main difficulty comes from the discrete part of the generator, being a non-local operator (see Points 1 and 2 of Proposition 4: without the jumps the difference between the drifts would be here just zero).}

It follows from Theorem \ref{th: lower bound} that, on a class of diffusions $X$ whose invariant density belongs to $\mathcal{H}_d (\beta, \mathcal{L})$ and starting from the observation of the process $(X_t)_{t \in [0, T]}$, it is impossible to find an estimator with a rate of estimation better than $T^{-{ \frac{{\modch \barfix{\bar{\beta}_3}}}{ 2 {\modch \barfix{\bar{\beta}_3}} + d - 2 }}}$, for the pointwise $L^2$ risk. Comparing the lower bound here above with the upper bound gathered in Theorem \ref{th: upper bound} we observe that, up to a logarithmic term, the two convergence rates we found are the same. {\modch Hence, the convergence rate we found by means of a kernel estimator is the best possible, but only up to a logarithmic term.}

\section{Proof upper bound}{\label{S: proof upper}}
This section is devoted to the proof of the upper bound gathered in Theorem \ref{th: upper bound}. To do that, we need first of all to prove Proposition \ref{prop: bound variance}. Before proving it we recall a result from \cite{Chapitre 4} that will be useful in the sequel. \\
From Lemma 1 in \cite{Chapitre 4}, {\modch which heavily relies on the first point of Theorem 1.1 in \cite{Chen},} we know that the following upper bound on the transition density holds true for $t \in [0,1]$:
$$p_{t} (y, y') \le c_0 (t^{- \frac{d}{2}} e^{- \lambda_0 \frac{|y - y'|^2}{t}} + \frac{t}{(t^\frac{1}{2} + |y - y'|)^{d + \alpha}})=: p_{t}^G (y, y') + p_{t}^J (y, y').$$
Such a bound is not uniform in $t$ big. Nevertheless, for $t \ge 1$, we have
$$p_t(y, y') = \int_{\mathbb{R}^d} p_{t - \frac{1}{2}} (y, \zeta) p_\frac{1}{2} (\zeta,y') d\zeta \le c \int_{\mathbb{R}^d} p_{t - \frac{1}{2}} (y, \zeta)(e^{- \lambda_0 (y' - \zeta)^2 \frac{1}{2}} + \frac{1}{(\sqrt{\frac{1}{2}} +|y' - \zeta|)^{d + \alpha}}) d\zeta \le $$
$$ \le c \int_{\mathbb{R}^d} p_{t - \frac{1}{2}} (y, \zeta) d\zeta \le c. $$
We deduce, for all $t$,
\begin{equation}
p_{t} (y, y') \le p_{t}^G (y, y') + p_{t}^J (y, y') + c.
\label{eq: decomp p}
\end{equation}

\begin{proof}\textit{Proposition \ref{prop: bound variance}} \\
In the sequel, the constant $c$ may change from line to line and it is independent of $T$. \\
From the definition \eqref{eq: def estimator} and the stationarity of the process we get
$$Var(\hat{\pi}_{h,T}(x)) = \frac{1}{T^2} \int_0^T \int_0^T k(t-s) dt \, ds,$$
where 
$$k(u) := Cov(\mathbb{K}_h(x - X_0), \mathbb{K}_h(x- X_u)).$$
We deduce that
$$Var(\hat{\pi}_{h,T}(x)) \le \frac{1}{T} \int_0^T | k(s)| ds. $$
In order to find an upper bound for the integral in the right hand side here above we will split the time interval $[0,T]$ into 4 pieces: 
$$[0,T]= [0, \delta_1) \cup [\delta_1, \delta_2) \cup[\delta_2, D) \cup[D,T],$$
where $\delta_1$, $\delta_2$ and $D$ will be chosen later, to obtain an upper bound which is as sharp as possible. \\
$\bullet$ For $s \in [0, \delta_1)$, from Cauchy -Schwartz inequality and the stationarity of the process we get
$$| k(s)| \le Var(\mathbb{K}_h(x -X_0))^\frac{1}{2}Var(\mathbb{K}_h(x -X_s))^\frac{1}{2} = Var(\mathbb{K}_h(x -X_0)).$$
The variance is smaller than 
$$\int_{\mathbb{R}^d} (\mathbb{K}_h (x -y))^2 \pi(y) dy.$$
Using the boundedness of $\pi$ and the definition of $\mathbb{K}_h$ given in \eqref{eq: def estimator} it follows
$$| k(s)| \le \frac{c}{\prod_{l =1}^d h_l}$$
which implies
\begin{equation}
\int_0^{\delta_1} | k(s)| \le  \frac{c\, \delta_1}{\prod_{l =1}^d h_l}.
\label{eq: stima su 0 delta1}
\end{equation}
$\bullet$ For $s \in [\delta_1, \delta_2)$, taking $\delta_2 < 1$, we use the definition of transition density, for which 
$$| k(s)| \le \int_{\mathbb{R}^d} |\mathbb{K}_h (x -y)| \int_{\mathbb{R}^d} |\mathbb{K}_h (x -y')| p_{s}(y, y') dy'  \pi (y) dy.$$
From \eqref{eq: decomp p} it follows 
\begin{equation}
| k(s)| \le k_1(s) + k_2(s) + c,
\label{eq: scomposizione k}
\end{equation}
with 
\begin{equation}
k_1(s) := \int_{\mathbb{R}^d} |\mathbb{K}_h (x -y)| \int_{\mathbb{R}^d} |\mathbb{K}_h (x -y')| p_{s}^G(y, y') dy'  \pi (y) dy,
\label{eq: def k1(s)}
\end{equation}
$$ k_2(s) := \int_{\mathbb{R}^d} |\mathbb{K}_h (x -y)| \int_{\mathbb{R}^d} |\mathbb{K}_h (x -y')| p_{s}^J(y, y') dy'  \pi (y) dy.$$
We now study $k_1(s)$. To this end we observe that, for $y' = (y'_1, ... y'_d)$, it is
$$p_{s}^G(y, y') \le \frac{c}{s} q_s^G(y'_3 ... y'_d| y'_1, y'_2, y),$$
where 
$$q_s^G(y'_3 ... y'_d| y'_1, y'_2, y) = e^{- \lambda_0 \frac{|y_1 - y'_1|^2}{s}} \times e^{- \lambda_0 \frac{|y_2 - y'_2|^2}{s}} \times \frac{1}{\sqrt{s}} e^{- \lambda_0 \frac{|y_3 - y'_3|^2}{s}} \times ... \times \frac{1}{\sqrt{s}} e^{- \lambda_0 \frac{|y_d - y'_d|^2}{s}}. $$
Let us stress that
\begin{equation}
\sup_{s \in (0,1)} \sup_{y'_1, y'_2, y \in \mathbb{R}^{d + 2}} \int_{\mathbb{R}^{d - 2}} q_s^G(y'_3 ... y'_d| y'_1, y'_2, y) dy'_3 ... dy'_d \le c < \infty. 
\label{eq: qs bounded}
\end{equation}
Then, from the definition of $k_1(s)$ given in \eqref{eq: def k1(s)}, we get
\begin{equation}
k_1(s) \le \frac{c}{s} \int_{\mathbb{R}^d} |\mathbb{K}_h (x -y)| (\int_{\mathbb{R}^d} |\mathbb{K}_h (x -y')| q_s^G(y'_3 ... y'_d| y'_1, y'_2, y) dy')  \pi (y) dy.
\label{eq: k1 start}
\end{equation}
Using the definition of $\mathbb{K}_h$ and \eqref{eq: qs bounded} we obtain
$$\int_{\mathbb{R}^d} |\mathbb{K}_h (x -y')| q_s^G(y'_3 ... y'_d| y'_1, y'_2, y) dy' $$
$$ \le \frac{c}{\prod_{j \ge 3} h_j} \int_{\mathbb{R}} \frac{1}{h_1} K(\frac{y'_1 - x_1}{h_1}) \int_{\mathbb{R}} \frac{1}{h_2} K(\frac{y'_2 - x_2}{h_2}) (\int_{\mathbb{R}^{d - 2}} q_s^G(y'_3 ... y'_d| y'_1, y'_2, y) dy'_3 ... dy'_d )  dy'_2 \, dy'_1 $$
$$\le \frac{c}{\prod_{j \ge 3} h_j} \int_{\mathbb{R}} \frac{1}{h_1} K(\frac{y'_1 - x_1}{h_1}) \int_{\mathbb{R}} \frac{1}{h_2} K(\frac{y'_2 - x_2}{h_2}) dy'_2 \, dy'_1 \le \frac{c}{\prod_{j \ge 3} h_j}. $$
We remark that in the reasoning here above it would have been possible to remove the contribution of no matter which couple of bandwidth. We choose to remove $h_1$ and $h_2$ because they are associated, in the bias term, to the smallest values of the smoothness ($\beta_1$ and $\beta_2$) and so they provide the strongest constraints.
Replacing the result here above in \eqref{eq: k1 start} and as $\int_{\mathbb{R}^d} |\mathbb{K}_h (x -y)| \pi (y) dy < c$, it implies 
\begin{equation}
\int_{\delta_1}^{\delta_2} k_1(s) ds \le \int_{\delta_1}^{\delta_2} \frac{c}{\prod_{j \ge 3} h_j} \frac{1}{s} ds = c \frac{\log (\delta_2) - \log (\delta_1)}{\prod_{j \ge 3} h_j}. 
\label{eq: stima k1}
\end{equation}
We want to act in the same way on $k_2(s)$. We observe it is 
$$ p_{t}^J (y, y') =c_0 t  \frac{1}{(t^\frac{1}{2} + |y - y'|)^{1 + \frac{\alpha}{d}}} \times ... \times \frac{1}{(t^\frac{1}{2} + |y - y'|)^{1 + \frac{\alpha}{d}}} $$
$$\le c_0 t  \frac{1}{(t^\frac{1}{2} + |y_1 - y_1'|)^{1 + \frac{\alpha}{d}}} \times ... \times \frac{1}{(t^\frac{1}{2} + |y_d - y_d'|)^{1 + \frac{\alpha}{d}}}$$
$$ \le c_0 t \, t^{- \frac{1}{2}(1 + \frac{\alpha}{d})} \, t^{- \frac{1}{2}(1 + \frac{\alpha}{d})} \frac{1}{(t^\frac{1}{2} + |y_3 - y_3'|)^{1 + \frac{\alpha}{d}}} \times ... \times \frac{1}{(t^\frac{1}{2} + |y_d - y_d'|)^{1 + \frac{\alpha}{d}}}=: c_0 t^{- \frac{\alpha}{d}} q_t^J(y'_3 ... y'_d|y'_1, y'_2, y).$$
We remark that
\begin{equation}
 \sup_{y'_1, y'_2, y \in \mathbb{R}^{d + 2}} \int_{\mathbb{R}^{d - 2}} q_s^J(y'_3 ... y'_d|y'_1, y'_2, y) dy'_3 ... dy'_d \le s^{- \frac{\alpha}{2d}(d - 2)}, 
\label{eq: qs J}
\end{equation}
as each of the $d-2$ multiplication factors can be seen as $\frac{1}{s^{(1 +\frac{\alpha}{d}) \frac{1}{2}}} \frac{1}{(1 + \frac{|y_j - y_j'|}{\sqrt{s}})^{1 + \frac{\alpha}{d}}} $ and we applied the change of variable $\frac{y_j - {\modch y'_j}}{\sqrt{s}} =: z$. \\
From the definition of $k_2(s)$ we have
$$k_2(s) \le c_0 s^{- \frac{\alpha}{d}} \int_{\mathbb{R}^d} |\mathbb{K}_h (x -y)| (\int_{\mathbb{R}^d} |\mathbb{K}_h (x -y')| q_s^J(y'_3 ... y'_d|y'_1, y'_2, y) dy')  \pi (y) dy.$$
Again, acting as on $k_1(s)$, we use the definition of the kernel function $\mathbb{K}_h$ and the integrability of $q_s^J$ gathered in \eqref{eq: qs J} to obtain
$$\int_{\mathbb{R}^d} |\mathbb{K}_h (x -y')| q_s^J(y'_3 ... y'_d|y'_1, y'_2, y) dy' $$
$$ \le \frac{c}{\prod_{j \ge 3} h_j} \int_{\mathbb{R}} \frac{1}{h_1} K(\frac{y'_1 - x_1}{h_1}) \int_{\mathbb{R}} \frac{1}{h_2} K(\frac{y'_2 - x_2}{h_2}) (\int_{\mathbb{R}^{d - 2}} q_s^J(y'_3 ... y'_d|y'_1, y'_2, y) dy'_d ... dy'_3)  dy'_2 dy'_1 $$
$$\le \frac{c s^{- \frac{\alpha}{2d}(d-2)}}{\prod_{j \ge 3}  h_j}. $$
Hence, as $1 - \frac{\alpha}{2} > 0$,
\begin{equation}
\int_{\delta_1}^{\delta_2} k_2(s) ds \le c \int_{\delta_1}^{\delta_2} s^{- \frac{\alpha}{d}} \frac{c s^{- \frac{\alpha}{2d}(d-2)}}{\prod_{j \ge 3}  h_j} ds = \frac{c}{\prod_{j \ge 3} h_j} \int_{\delta_1}^{\delta_2} s^{- \frac{\alpha}{2}} ds = \frac{c \delta_2^{1 - \frac{\alpha}{2}}}{\prod_{j \ge 3} h_j}. 
\label{eq: stima k2}
\end{equation}
From \eqref{eq: scomposizione k}, \eqref{eq: stima k1} and \eqref{eq: stima k2} it follows
$$\int_{\delta_1}^{\delta_2} |k(s)| ds \le \frac{c}{\prod_{j \ge 3} h_j} (|\log(\delta_1)| + |\log (\delta_2)|) + \frac{c \delta_2^{1 - \frac{\alpha}{2}}}{\prod_{j \ge 3} h_j} + c \delta_2$$
\begin{equation}
 \le \frac{c}{\prod_{j \ge 3} h_j} (|\log(\delta_1)| + |\log (\delta_2)|) + c \delta_2,
\label{eq: fine delta1 delta2}
\end{equation}
as $1 - \frac{\alpha}{2} > 0$ and so the term coming from $k_2$ is negligible compared to {\modch $\frac{|\log (\delta_2)|}{\prod_{j \ge 3} h_j}$, for $\delta_2$ small enough}. \\ 
$\bullet$ For $s \in [\delta_2, D)$ we still use \eqref{eq: decomp p} observing that, in particular, 
$$| k(s)| \le c \int_{\mathbb{R}^d} |\mathbb{K}_h (x -y)| \int_{\mathbb{R}^d} |\mathbb{K}_h (x -y')| (s^{- \frac{d}{2}} + s^{1 - \frac{d + \alpha}{2}} + 1) dy'  \pi (y) dy $$
$$\le c (s^{- \frac{d}{2}} + s^{1 - \frac{d + \alpha}{2}} + 1).$$
We therefore get
$$\int_{\delta_2}^D |k(s)| ds \le c \int_{\delta_2}^D (s^{- \frac{d}{2}} + s^{1 - \frac{d + \alpha}{2}} + 1) ds $$
$$\le c (\delta_2^{1 - \frac{d}{2}} + \delta_2^{2 - \frac{d + \alpha}{2}} 1_{\left \{ d > 4 - \alpha \right \}} + D^{2 - \frac{d + \alpha}{2}} 1_{\left \{ d < 4 - \alpha \right \}} + (|\log D| + |\log \delta_2|)1_{\left \{ d = 4 - \alpha \right \}} + D) $$
\begin{equation}
\le c (\delta_2^{1 - \frac{d}{2}} + D^{2 - \frac{d + \alpha}{2}} 1_{\left \{ d < 4 - \alpha \right \}} + D)
\label{eq: fine delta2 D}
\end{equation}
where we have used that, as $d \ge 3$, $1 - \frac{d}{2}<0$. The exponent of the second term in the integral here above, after having integrated, is $2 - \frac{d + \alpha}{2}$. It is more than zero if $d < 4 - \alpha$, which is possible only if $\alpha \in (0,1)$ and $d =3$, less then zero otherwise. Moreover, $\delta_2^{2 - \frac{d + \alpha}{2}} 1_{\left \{ d \ge 4 - \alpha \right \}}$ is negligible compared to $\delta_2^{1 - \frac{d}{2}}$ as $\alpha < 2$ and so $2 - \frac{d + \alpha}{2} > 1 - \frac{d}{2}$. Moreover, the logarithmic terms are negligible compared to the others, {\modch for $\delta_2$ small enough and $D$ large enough}. \\
$\bullet$ For $s \in [D,T]$ our main tool is Lemma 2 in \cite{Chapitre 4}. As the process $X$ is exponentially $\beta$- mixing, indeed, the following control on the covariance holds true:
$$|k(s)| \le c \left \| \mathbb{K}_h (x - \cdot ) \right \|_\infty^2 e^{- \rho s} \le c (\frac{1}{\prod_{j = 1}^d h_j})^2 e^{- \rho s}, $$
for $\rho$ and $c$ positive constants as given in Definition 1 of exponential ergodicity in \cite{Chapitre 4}. It entails 
\begin{equation}
\int_{D}^T |k(s)| ds \le c (\frac{1}{\prod_{j = 1}^d h_j})^2 e^{- \rho D}. 
\label{eq: fine D T}
\end{equation}
Collecting together \eqref{eq: stima su 0 delta1}, \eqref{eq: fine delta1 delta2}, \eqref{eq: fine delta2 D} and \eqref{eq: fine D T} we deduce
\begin{equation}
Var(\hat{\pi}_{h,T}(x)) \le \frac{c}{T}( \frac{ \delta_1}{\prod_{l =1}^d h_l} + \frac{1}{\prod_{j \ge 3} h_j} (|\log(\delta_1)| + |\log (\delta_2)|) +   \delta_2 + \delta_2^{1 - \frac{d}{2}}
\label{eq: variance con parametri}
\end{equation}
$$ +  D + (\frac{1}{\prod_{j = 1}^d h_j})^2 e^{- \rho D}),$$
{\modch where we have also used that $D^{2 - \frac{d + \alpha}{2}} 1_{\left \{ d < 4 - \alpha \right \}} \le D$. Indeed, since $d \ge 3$ and $\alpha \in (0,2)$, we always have $2 - \frac{d + \alpha}{2} \le 1$. Moreover we know that $D \ge 1$ by definition. When $d < 4 - \alpha$, the power is positive, thus $D^{2 - \frac{d + \alpha}{2}} 1_{\left \{ d < 4 - \alpha \right \}} \le D$. } \\
We now want to choose $\delta_1$, $\delta_2$ and $D$ for which the estimation here above is as sharp as possible. To do that, if $h_1 h_2 < (\prod_{j \ge 3} h_j)^{\frac{2}{d-2}}$ we take 
$\delta_1 := h_1 h_2$, $\delta_2 := (\prod_{j \ge 3} h_j)^{\frac{2}{d-2}}$ and $D:= [\max (- \frac{2}{\rho} \log (\prod_{j = 1}^d h_j), 1) \land T]$. Replacing them in \eqref{eq: variance con parametri} we obtain
$$Var(\hat{\pi}_{h,T}(x)) \le \frac{c}{T}(\frac{1}{\prod_{j \ge 3} h_j} + \frac{\sum_{j = 1}^d |\log(h_j)|}{\prod_{j \ge 3} h_j}  + (\prod_{j \ge 3} h_j)^{\frac{2}{d-2}} + \frac{1}{\prod_{j \ge 3} h_j} $$
$$ + \sum_{j = 1}^d |\log(h_j)| + 1 ) \le \frac{c}{T} \frac{\sum_{j = 1}^d |\log(h_j)|}{\prod_{j \ge 3} h_j} ,$$
where the last inequality is a consequence of the fact that, as for any $j \in \left \{ 1, ... , d \right \}$ $h_j$ is small and in particular they are smaller than $\frac{1}{2}$, all the other terms are bounded by $c\, \frac{\sum_{j = 1}^d |\log(h_j)|}{\prod_{j \ge 3} h_j}$ and \eqref{eq: estim variance with log} is therefore proved. \\
If otherwise $ h_1 h_2 \ge (\prod_{j \ge 3} h_j)^{\frac{2}{d-2}}$, we estimate directly $|k(s)|$ as in \eqref{eq: stima su 0 delta1} between $0$ and $\delta_2$. Using also \eqref{eq: fine delta2 D} and \eqref{eq: fine D T} we get
$$Var(\hat{\pi}_{h,T}(x)) \le \frac{c}{T}( \frac{ \delta_2}{\prod_{l =1}^d h_l} +  \delta_2^{1 - \frac{d}{2}}+ D + (\frac{1}{\prod_{j = 1}^d h_j})^2 e^{- \rho D}).$$
Choosing once again $\delta_2 := (\prod_{j \ge 3} h_j)^{\frac{2}{d-2}}$ and $D:= [\max (- \frac{2}{\rho} \log (\prod_{j = 1}^d h_j), 1) \land T]$ and recalling also that $\delta_2 = (\prod_{j \ge 3} h_j)^{\frac{2}{d-2}} \le h_1 h_2$, we get
$$Var(\hat{\pi}_{h,T}(x)) \le \frac{c}{T}(\frac{1}{\prod_{j \ge 3} h_j} + \frac{1}{\prod_{j \ge 3} h_j}  + \sum_{j = 1}^d |\log(h_j)| + 1 ) \le \frac{c}{T} \frac{1}{\prod_{j \ge 3} h_j} ,$$
as we wanted.

\end{proof}

\subsection{Proof of Theorem \ref{th: upper bound}}
\begin{proof}
We write the usual bias-variance decomposition
\begin{equation}
\mathbb{E}[|\hat{\pi}_{h,T}(x) - \pi (x)|^2] \le |\mathbb{E}[\hat{\pi}_{h,T}(x)] - \pi (x)|^2 + Var(\hat{\pi}_{h,T}(x)).
\label{eq: decomposition bias variance}
\end{equation}
{\modch Regarding the bias, a standard computation (see for example the proof of Proposition 2 of \cite{Chapitre 4}) provides 
\begin{equation}
|\mathbb{E}[\hat{\pi}_{h,T}(x)] - \pi (x)|^2 \le c  \sum_{j = 1}^d h_j^{\beta_j}.
\label{eq: stima bias fine}
\end{equation}
An analogous computation can be found in Proposition 1.2 of \cite{Ts} or in Proposition 1 of \cite{Decomp}.\\
It is here important to remark that the constant $c$ does not depend on $x$.
For $h_1 h_2 < (\prod_{l \ge 3} h_l)^{\frac{2}{d-2}}$, the estimation \eqref{eq: stima bias fine} here above together with the decomposition \eqref{eq: decomposition bias variance} and the upper bound on the variance gathered in \eqref{eq: estim variance with log} of Proposition \ref{prop: bound variance}, gives us \eqref{eq: MSQ with log}.\\}
In order to choose the rate optimal bandwidth, we define {\modch 
$h_l (T) := (\frac{\log T}{T})^{a_l}$ for $l \in \left \{ 1, ... , d \right \}$ and we look for $a_1$, ... $a_d$ such that the upper bound of the mean-squared error in the right hand side of \eqref{eq: MSQ with log} is as small as possible. We remark that
\begin{align*}
Var(\hat{\pi}_{h,T}(x)) & \le \frac{c}{T} \frac{\sum_{j = 1}^d |\log(h_j)|}{\prod_{j \ge 3} h_j} \\
& \le \frac{c}{T} \log T (\frac{T}{\log T})^{\sum_{l \ge 3} a_l} \\
&= c (\frac{\log T}{T})^{1 - \sum_{l \ge 3} a_l}
\end{align*}
Therefore, after having replaced $h_l(T)$, the right hand side of \eqref{eq: MSQ with log} is
\begin{equation}
c \sum_{l =1}^d (\frac{\log T}{T})^{2 a_l \beta_l} + c (\frac{\log T}{T})^{1 - \sum_{l \ge 3} a_l}.
\label{eq: upper after replacement}
\end{equation}}
To get the balance we have to solve the following system in $a_3$, ... , $a_d$:
$$
\begin{cases}
\beta_i a_i = \beta_{i + 1} a_{i + 1} \qquad \forall i \in \left \{ 3, ... , d-1 \right \} \\
2 \beta_d a_d = 1 - \sum_{l \ge 3} a_l,
\end{cases}   
$$
while $a_1$ and $a_2$ have to be big enough to ensure that both $(\frac{1}{T})^{2 \beta_1 a_1}$ and $(\frac{1}{T})^{2 \beta_2 a_2}$ are negligible compared to the other terms.
We observe that, as a consequence of the first $d-3$ equations, we can write 
\begin{equation}
a_l =\frac{\beta_d}{\beta_l} a_d, \qquad \forall l \in \left \{ 3, ... , d-1 \right \}.
\label{eq: al tramite a_d}
\end{equation}
Hence, the last equation becomes
$$2 \beta_d a_d = 1 - \beta_d a_d \sum_{l \ge 3} \frac{1}{\beta_l} = 1 - \beta_d a_d \frac{d-2}{{\modch \barfix{\bar{\beta}_3}}}, $$
where $\bar{\beta}_3$ is the mean smoothness over $\beta_3$, ... , $\beta_d$ and it is such that $\frac{1}{{\modch \barfix{\bar{\beta}_3}}} = \frac{1}{d-2} \sum_{l \ge 3} \frac{1}{\beta_l}$. It follows
\begin{equation}
a_l =\frac{{\modch \barfix{\bar{\beta}_3}}}{\beta_l (2 {\modch \barfix{\bar{\beta}_3}} + d -2 ) } \qquad \forall l \in \left \{ 3, ... , d-1 \right \}.
\label{eq: scelta al}
\end{equation}
Regarding $a_1$ and $a_2$, we take them big enough to ensure that
\begin{equation}
a_1 > \frac{{\modch \barfix{\bar{\beta}_3}}}{\beta_1 (2 {\modch \barfix{\bar{\beta}_3}} + d -2 ) },  \qquad a_2 > \frac{{\modch \barfix{\bar{\beta}_3}}}{\beta_2 (2 {\modch \barfix{\bar{\beta}_3}} + d -2 ) }.
\label{eq: scelta a1 a2}
\end{equation}
Plugging them in \eqref{eq: upper after replacement} we get
{\modch 
$$\mathbb{E}[|\hat{\pi}_{h,T}(x) - \pi (x)|^2] \le  c (\frac{\log T}{T})^{\frac{ 2 {\modch \barfix{\bar{\beta}_3}}}{2 {\modch \barfix{\bar{\beta}_3}} + d -2  }}, $$
} as we wanted. \\
{ \modch We now observe that, in the anisotropic case, the multi bandwidth $h$ always satisfies $h_1 h_2 < (\prod_{l \ge 3} h_l)^{\frac{2}{d-2}}$ while it is possible to improve the convergence rate in the isotropic case, by removing the logarithm. Indeed, $h_1 h_2 < (\prod_{l \ge 3} h_l)^{\frac{2}{d-2}}$ holds true if and only if
$$(\frac{1}{T})^{a_1 +a_2} < (\frac{1}{T})^{(a_3 + ... + a_d) \frac{2}{d-2}}.$$
Because of the choice of $a_1, ... , a_d$ gathered in \eqref{eq: scelta al} and \eqref{eq: scelta a1 a2}, it holds true if 
\begin{equation}
\frac{1}{2}(\frac{1}{\beta_1} + \frac{1}{\beta_2}) > \frac{1}{{\modch \barfix{\bar{\beta}_3}}}.
\label{eq: cond beta }
\end{equation}
As $\beta_1 \le \beta_2 \le ... \le \beta_d$, equation \eqref{eq: cond beta } always holds true, in the anisotropic context. \\
However, in the isotropic context, we have $h_1 h_2 = (\prod_{l \ge 3} h_l)^{\frac{2}{d-2}} = h^2$. Here estimation \eqref{eq: stima bias fine} together with decomposition \eqref{eq: decomposition bias variance} and the upper bound on the variance gathered in \eqref{eq: estim variance without log} of Proposition \ref{prop: bound variance} gives us, remarking also that $\beta_1 = ... = \beta_d =: \beta$,
$$\mathbb{E}[|\hat{\pi}_{h,T}(x) - \pi (x)|^2] \le c h^{2 \beta} + \frac{c}{T} \frac{1}{h^{d - 2}}.$$ }
It leads us to the rate optimal choice $h(T)= (\frac{1}{T})^{\frac{1}{2 \beta + d - 2}}$, which yields 
$$\mathbb{E}[|\hat{\pi}_{h,T}(x) - \pi (x)|^2] \le (\frac{c}{T})^{\frac{2 \beta}{2 \beta + d - 2}}, $$
as we wanted.

\end{proof}

\section{Proof lower bound}{\label{S: proof minimax}}
We want to prove Theorem \ref{th: lower bound} using the two hypothesis method, as explained for example in Section 2.3 of Tsybakov \cite{Ts}. The idea is to introduce two drift functions $b_0$ and $b_1$ which belong to $\Sigma (\beta, \mathcal{L})$ and for which the laws $\mathbb{P}_{{b_0}}$ and $\mathbb{P}_{{b_1}}$ are close. To do it, the knowledge of the link between $b$ and $\pi_b$ is crucial. {\modch In particular, we will study in detail the above mentioned link in Section \ref{S: explicit link} while we will provide two priors in Section \ref{S: construction priors}. In Section \ref{S: proof teo minimax} we will use these preliminaries in order to prove the lower bound for the pointwise minimax risk gathered in Theorem \ref{th: lower bound}. } 

\subsection{Explicit link between the drift and the stationary measure}{\label{S: explicit link}}
{\modch In absence of jumps, most of the times, reversible diffusion processes with unit diffusion processes are considered in order to estimate the invariant density (see \cite{RD} and \cite{Strauch}). In this case, the connection between the drift function and the invariant measure is explicit:
$$b (x) = - \nabla V (x) = \frac{1}{2} \nabla (\log \pi_{b}) (x),$$
 where $V$ is a $C^2(\mathbb{R}^d)$ function, which we refer to as potential.}
Adding the jumps, it is no longer true and so, in our framework, it is challenging to get a relation between $b$ and $\pi_b$.
We need to introduce $A$, the generator of the diffusion $X$ solution of \eqref{eq: model lower bound}. It is composed by a continuous part and a discrete one:  $A = A_c + A_d$, with
\begin{equation}
A_c f (x) := \frac{1}{2} \sum_{i,j = 1}^d (a \cdot a^T)_{ij} \frac{\partial^2}{\partial x_i \partial x_j } f(x) + \sum_{i = 1}^d b^i(x) \frac{\partial }{\partial x_i } f(x), 
\label{eq: definition generator}
\end{equation}
$$A_d f(x) := \int_{\mathbb{R}^d} [f (x + \gamma \cdot z) - f(x) - \gamma \cdot z \cdot \nabla f(x) ] F(z) dz.$$
We now introduce a class of function that will be useful in the sequel:
\begin{equation*}
\begin{split}
\mathcal{C} := & \left \{ f : \mathbb{R}^d \rightarrow \mathbb{R}, \,  f \in C^2(\mathbb{R}^d) \mbox{ such that } \forall i \in \left \{ 1, ..., d \right \} \, \lim_{x_i \rightarrow \pm \infty} f(x) = 0, \right.\\
&  \left. \lim_{x_i \rightarrow \pm \infty} \frac{\partial}{ \partial x_i} f(x) = 0 \mbox{  and } \int_{\mathbb{R}^d} f(x) dx < \infty  \right \}.
\end{split}
\end{equation*}
We denote furthermore as $A_b^*$ the adjoint operator of $A$ on $L^2(\mathbb{R}^d)$ which is such that, for $f, g \in \mathcal{C}$,
$$\int_{\mathbb{R}^d} A f (x) g(x) dx = \int_{\mathbb{R}^d} f(x) A^*_b g (x) dx.$$
The following lemma, that will be proven in Section \ref{S: other proofs}, makes explicit the form of $A_b^*$.
\begin{lemma}
Let $A_b^*$ the adjoint operator on $L^2(\mathbb{R}^d)$ of $A$, generator of the diffusion $X$ solution of \eqref{eq: model lower bound}, where the subscript $b$ is to underline its dependence on the drift function. Then, for $g \in \mathcal{C}$, it is
$$ A_b^*g (x) = \frac{1}{2} \sum_{i = 1}^d \sum_{j = 1}^d (a \, a^T)_{i j} \frac{\partial^2}{\partial x_i \partial x_j } g (x) - (\sum_{i = 1}^d \frac{\partial \,  b^i}{\partial x_i } g (x) + b^i \frac{\partial \, g}{\partial x_i }  (x)) +$$
$$ + \int_{\mathbb{R}^d }[g(x - \gamma \cdot z) - g(x) + \gamma \cdot z \cdot \nabla g(x)] F(z) dz. $$
\label{lemma: adjoint operator}
\end{lemma}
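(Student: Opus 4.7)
The plan is to split $A = A_c + A_d$ and compute the two adjoints separately by direct integration by parts (for $A_c$) and by a combination of Fubini, translation change of variables and integration by parts (for $A_d$). The decay conditions built into the class $\mathcal{C}$ are exactly what is needed to kill all boundary contributions.

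For the continuous part, I would write, for $f,g \in \mathcal{C}$,
\[
\int_{\mathbb{R}^d} A_c f(x)\, g(x)\, dx = \frac{1}{2}\sum_{i,j}(aa^T)_{ij}\int \partial^2_{ij}f\,g\,dx + \sum_i \int b^i\,\partial_i f\,g\,dx.
\]
Because $aa^T$ is a constant matrix, integrating by parts twice in the second-order term transfers both derivatives onto $g$ and returns the same expression acting on $g$, with the two minus signs cancelling; the vanishing of $f$ and $\nabla f$ at infinity makes the boundary terms disappear. For the drift term, one integration by parts yields $-\int f\, \partial_i(b^i g)\,dx = -\int f\,(\partial_i b^i\cdot g + b^i \partial_i g)\,dx$. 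Summing over $i$ reproduces the first two lines of the claimed formula for $A^*_b g$.

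For the jump part, I would start from
\[
\int_{\mathbb{R}^d}\! A_d f(x)\,g(x)\,dx = \int_{\mathbb{R}^d}\! F(z)\,dz\int_{\mathbb{R}^d}\! \bigl[f(x+\gamma z) - f(x) - \gamma z\cdot\nabla f(x)\bigr] g(x)\,dx,
\]
where Fubini is justified by the integrability built into $\mathcal{C}$ and the moment/tail conditions on $F$ in A3. For the first inner term I perform the translation $y=x+\gamma z$, which turns $\int f(x+\gamma z) g(x)\,dx$ into $\int f(y)\, g(y-\gamma z)\,dy$; the second term is left as $\int f(x)g(x)\,dx$; and for the third term one integration by parts (again with no boundary contribution thanks to $\mathcal{C}$) gives $\int \gamma z\cdot\nabla f(x)\,g(x)\,dx = -\int f(x)\,\gamma z \cdot \nabla g(x)\,dx$. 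Putting the three pieces back together and factoring $f$ out produces precisely $\int f(x)\bigl[g(x-\gamma z)-g(x)+\gamma z\cdot\nabla g(x)\bigr]F(z)\,dz\,dx$, which is the jump part of the claimed $A_b^*$.

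I expect no real conceptual obstacle: everything reduces to integration by parts and a linear change of variables. The only points requiring a bit of care are (i) verifying that the decay in $\mathcal{C}$ together with the exponential moment in A3, point 5, legitimise both Fubini and the vanishing of all boundary terms after the changes of variable, and (ii) keeping track of signs in the jump part, where the adjoint naturally flips $+\gamma z$ into $-\gamma z$ in the translation and the integration by parts changes the sign of the gradient correction, leaving the formula as stated.
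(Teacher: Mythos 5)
Your proof is correct and follows essentially the same route as the paper: split $A=A_c+A_d$, treat $A_c$ by repeated integration by parts (all boundary terms vanish thanks to $\mathcal{C}$), and treat $A_d$ by a change of variable for the shifted term plus one integration by parts for the gradient term. The one minor stylistic difference is that you apply Fubini before the substitution so that the change of variable $y=x+\gamma z$ becomes a pure translation with Jacobian $1$; the paper instead changes variables $u=x+\gamma z$ in the inner $z$-integral first, which introduces the Jacobian $|\gamma^{-1}|$ and the density $F(\gamma^{-1}(u-x))$, and then undoes this by a second substitution at the end — your ordering is marginally cleaner but it is the same argument.
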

If $g : \mathbb{R}^d \rightarrow \mathbb{R}$ is a probability density of class $\mathcal{C}^2$, solution of $A_b^* g = 0$, then it is an invariant density for the process we are considering. When the stationary distribution $\pi_b$ is unique, therefore, it can be computed as solution of the equation $A_b^* \pi_b = 0$. As one can see from Lemma \ref{lemma: adjoint operator}, the adjoint operator has a pretty complicate form. Hence, it seems impossible to find explicit solutions $g$ of $A_b^* g = 0$ for any $b$ and consequently it seems impossible to write $\pi_b$ as an explicit function of $b$. \\
However, it can be seen that if one consider $\pi_{b}$ as fixed and $b$ as the unknown variable, then finding solutions in $b$ is simpler. Moreover, the adjoint of the discrete part of the generator does not depend on $b$ and therefore the solution in $b$ is the same it would have been in absence of jumps, plus a second term which derives from the contribution of the jumps. In order to compute a function $b = b_g$ solution of $A_b^* g = 0$, we need to introduce some notations. \\
For $g \in \mathcal{C}$ we denote as $A^*_d \, g$ the adjoint operator of $A_d \, g$ which is, for all $x \in \mathbb{R}^d$,
$$A^*_d \, g (x) = \int_{\mathbb{R}^d }[g(x - \gamma \cdot z) - g(x) + \gamma \cdot z \cdot \nabla g(x)] F(z) dz.$$
Moreover, we introduce the following quantity, that will be useful in the sequel:
\begin{equation}
A^*_{d,i} \, g (x) = \int_{\mathbb{R}^d }[g(x_1 - (\gamma \cdot z)_1, ... , x_i - (\gamma \cdot z)_i , x_{i + 1}, ... , x_d) 
\label{eq: definition Adi}
\end{equation}
$$- g(x_1 - (\gamma \cdot z)_1, ... , x_{i-1} - (\gamma \cdot z)_{i - 1} , x_{i}, ... , x_d) + (\gamma \cdot z)_i  \frac{\partial g (x)}{\partial x_i}] F(z) dz.$$
To make easier the notation here above, we denote as $\bar{x}_i$ the vector $(x_1 - (\gamma \cdot z)_1, ... , x_i - (\gamma \cdot z)_i , x_{i + 1}, ... , x_d)$ for $i \in \left \{ 1, ... , d \right \}$ while $\bar{x}_0$ is simply $x$. Clearly, it implies that $\bar{x}_d= x - (\gamma \cdot z)$ and so it is easy to prove that the sum of $A^*_{d,i} \, g (x)$ on $i$ is $A^*_{d} \, g (x)$:
\begin{equation}
\sum_{i = 1}^d A^*_{d,i} \, g (x) = \sum_{i = 1}^d \int_{\mathbb{R}^d }[g(\bar{x}_i)- g(\bar{x}_{i -1}) + (\gamma \cdot z)_i  \frac{\partial g (x)}{\partial x_i}] F(z) dz
\label{eq: sum Adi}
\end{equation}
$$= \int_{\mathbb{R}^d } [g(\bar{x}_d)- g(\bar{x}_0) + \sum_{i = 1}^d (\gamma \cdot z)_i  \frac{\partial g (x)}{\partial x_i}] F(z) dz = \int_{\mathbb{R}^d }[g(x - \gamma \cdot z) - g(x) + \gamma \cdot z \cdot \nabla g(x)] F(z) dz = A^*_{d} \, g (x).$$
Then, for $g \in \mathcal{C}$ and $g > 0$, we introduce for all $x \in \mathbb{R}^d$ and for all $i \in \left \{1, ... , d \right \}$,
\begin{equation}
b^i_g (x) = \frac{1}{g (x)} \int_{- \infty}^{x_i} \big( \frac{1}{2} \sum_{j = 1}^d (a \cdot a^T)_{i j} \frac{\partial^2 g}{\partial x_i \partial x_j} (w_i) + A^*_{d,i} \, g (w_i) \big) d w = 
\label{eq: def b sol A*}
\end{equation}
$$= \frac{1}{g (x)} \frac{1}{2} \sum_{j = 1}^d (a \cdot a^T)_{i j} \frac{\partial g}{\partial x_j} (x) + \frac{1}{g (x)}  \int_{- \infty}^{x_i} A^*_{d,i} \, g (w_i) d w,  \quad \mbox{if } x_i < 0;$$
$$b^i_g (x) = - \frac{1}{g (x)} \int_{x_i}^{\infty} \big( \frac{1}{2} \sum_{j = 1}^d (a \cdot a^T)_{i j} \frac{\partial^2 g}{\partial x_i \partial x_j} (w_i) + A^*_{d,i} \, g (w_i) \big) d w$$
$$=   \frac{ 1}{g (x)} \frac{1}{2}  \sum_{j = 1}^d (a \cdot a^T)_{i j} \frac{\partial g}{\partial x_j} (x) - \frac{1}{g (x)}  \int_{x_i}^{\infty} A^*_{d,i} \, g (w_i) d w, \quad \mbox{if } x_i \ge 0;$$
where $w_i = (x_1, ... , x_{i - 1}, w, x_{i + 1}, ... , x_d)$. We observe that, by the definition of $A^*_{d, i}$ and the fact that the function $g$ is integrable, $b^i$ is well defined. Moreover, as both $g$ and its derivatives goes to zero at infinity and using that the Lebesgue measure is invariant on $\mathbb{R}$, it is
$$ \int_{- \infty}^{\infty} \frac{1}{2} \sum_{j = 1}^d (a \cdot a^T)_{i j} \frac{\partial^2 g}{\partial x_i \partial x_j} (w_i) + A^*_{d,i} \, g (w_i)  d w =0. $$
Hence, the two definitions of $b^i$ given here above are equivalent on $\mathbb{R}$. We finally denote as $b_g : \mathbb{R}^d \rightarrow \mathbb{R}^d$ the function such that, for all $x \in \mathbb{R}^d$, $b_g(x) = (b^1_g (x), ... , b^d_g(x))$. \\
We show that the function $b_g$ here above introduced is actually solution of $A_{b}^* g (x) = 0$.
\begin{proposition}
\begin{enumerate}
    \item Let $g$ a positive function in $\mathcal{C}$. Then, 
    $$A_{b_g}^* g (x) = 0, \qquad \forall x \in \mathbb{R}^d.$$
    \item Let $\pi : \mathbb{R}^d \rightarrow \mathbb{R}$ a probability density such that $\pi \in \mathcal{C}$ and $\pi > 0$. If $b_\pi$, defined as in \eqref{eq: def b sol A*}, is a bounded Lipschitz function which satisfies A2, then $\pi$ is the unique stationary probability of the stochastic differential equation \eqref{eq: model lower bound} with drift coefficient $b = b_\pi$.
\end{enumerate}
\label{prop: b g}
\end{proposition}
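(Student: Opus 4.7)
\textbf{Plan for Proposition \ref{prop: b g}.} For part 1, the idea is to rewrite the drift contribution in $A_b^*$ as a divergence and then exploit the fundamental theorem of calculus in the definition of $b_g^i$. Starting from the expression for $A_b^*$ given in Lemma \ref{lemma: adjoint operator}, I would first observe that
\[
\sum_{i=1}^d\Bigl(\frac{\partial b^i}{\partial x_i}\,g + b^i\,\frac{\partial g}{\partial x_i}\Bigr)(x) \;=\; \sum_{i=1}^d \frac{\partial}{\partial x_i}\bigl(b^i\,g\bigr)(x),
\]
so that
\[
A_{b}^*\,g(x) \;=\; \frac{1}{2}\sum_{i,j=1}^d (a\,a^T)_{ij}\frac{\partial^2 g}{\partial x_i\partial x_j}(x)\;-\;\sum_{i=1}^d \frac{\partial}{\partial x_i}\bigl(b^i\,g\bigr)(x)\;+\;A_d^*\,g(x).
\]

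Now I would specialize $b=b_g$. By the definition \eqref{eq: def b sol A*} (taking for instance the form valid on $\{x_i<0\}$, the other case being symmetric),
\[
(b_g^i\,g)(x) \;=\; \int_{-\infty}^{x_i}\Bigl(\frac{1}{2}\sum_{j=1}^d(a\,a^T)_{ij}\frac{\partial^2 g}{\partial x_i\partial x_j}(w_i)+A^*_{d,i}\,g(w_i)\Bigr)\,dw,
\]
so the fundamental theorem of calculus gives
\[
\frac{\partial}{\partial x_i}\bigl(b_g^i\,g\bigr)(x) \;=\; \frac{1}{2}\sum_{j=1}^d(a\,a^T)_{ij}\frac{\partial^2 g}{\partial x_i\partial x_j}(x)+A^*_{d,i}\,g(x).
\]
Summing in $i$ and using the telescoping identity \eqref{eq: sum Adi} which gives $\sum_i A^*_{d,i}\,g = A^*_d\,g$, I would obtain
\[
\sum_{i=1}^d\frac{\partial}{\partial x_i}\bigl(b_g^i\,g\bigr)(x)\;=\;\frac{1}{2}\sum_{i,j=1}^d(a\,a^T)_{ij}\frac{\partial^2 g}{\partial x_i\partial x_j}(x)+A^*_d\,g(x),
\]
and plugging this into the formula for $A_{b_g}^*\,g$ makes the three terms cancel pairwise, yielding $A_{b_g}^*\,g(x)=0$ for every $x\in\mathbb{R}^d$. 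The only point that needs a little care is checking that the two integral representations in \eqref{eq: def b sol A*} define the same function on $\{x_i=0\}$ and hence a $C^1$ function globally; this follows from the observation already made in the paper that the total integral over $\mathbb{R}$ of the integrand vanishes, thanks to the decay of $g$ and its partial derivatives at infinity and to the translation invariance of Lebesgue measure.

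For part 2, assuming the hypotheses on $\pi$ and $b_\pi$, part 1 yields $A_{b_\pi}^*\,\pi=0$ pointwise. Integrating against any test function $f\in\mathcal{C}$ (using the adjointness relation and the integrability/decay built into the class $\mathcal{C}$ to justify the integration by parts) I would get $\int A_{b_\pi}f\cdot\pi\,dx=0$, so $\pi$ is an invariant density in the weak sense for the SDE \eqref{eq: model lower bound} with drift $b_\pi$. Since $b_\pi$ is Lipschitz, bounded and satisfies A2, and since $\gamma$ is invertible (so that A3 is automatic as already noticed right after Definition \ref{def: insieme sigma}), Lemma 2 of \cite{Chapitre 4} applies and the SDE with drift $b_\pi$ admits a unique invariant probability measure. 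This unique invariant probability must then coincide with $\pi\,dx$, which proves the claim.

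\textbf{Main obstacle.} The computational core is essentially routine once one sees the right decomposition; the actual subtle point is the justification of the pointwise identity $A_{b_\pi}^*\pi=0\Rightarrow\pi$ invariant, because the weak formulation requires an integration-by-parts for the non-local operator $A_d$, and one must verify that the boundary terms vanish (this is precisely why the class $\mathcal{C}$ is designed to impose decay both on $g$ and on its first derivatives). A second delicate point, worth flagging, is to make sure that $b_g$ is well defined and sufficiently regular on all of $\mathbb{R}^d$: this requires checking that the $x_i$-antiderivative of $\tfrac12\sum_j(aa^T)_{ij}\partial_i\partial_j g + A^*_{d,i}g$ over the whole line is zero, a fact that hinges on Fubini and on the translation invariance of the Lévy integral inside $A^*_{d,i}$.
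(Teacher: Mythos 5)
Your proof is correct and follows essentially the same route as the paper's. The only cosmetic difference is in part~1: you differentiate the product $b_g^i\,g$ directly (so the fundamental theorem of calculus gives the antiderivative at once), whereas the paper applies the quotient rule to $b_g^i=\tfrac{1}{g}\int_{-\infty}^{x_i}(\cdots)\,dw$ and then substitutes both $b_g^i$ and $\partial_i b_g^i$ into the formula from Lemma~\ref{lemma: adjoint operator}; the cancellation is the same. For part~2, you phrase the invariance step via the weak (adjointness) formulation while the paper invokes It\^o's formula, but both then appeal identically to Lemma~2 of \cite{Chapitre 4} for uniqueness of the invariant measure given that $b_\pi$ is bounded, Lipschitz and satisfies A2.
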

\begin{proof}
\textit{1.} For $b^i_g (x)$ defined as in \eqref{eq: def b sol A*}, we get
$$\frac{\partial \,  b_g^i}{\partial x_i } (x) = \frac{1}{g (x)} (\frac{1}{2} \sum_{j = 1}^d (a \cdot a^T)_{i j} \frac{\partial^2 g}{\partial x_i \partial x_j} (x) + A^*_{d, i} \, g (x)) - \frac{b^i_g (x)}{g(x)} \frac{\partial \,  g}{\partial x_i } (x).$$
Replacing $b^i_g (x)$ and $\frac{\partial \,  b_g^i}{\partial x_i } (x)$ in $A_{b_g}^* g (x)$ given by Lemma \ref{lemma: adjoint operator} and using \eqref{eq: sum Adi}, we easily obtain $A_{b_g}^* g (x) = 0$. \\
\textit{2.} From Ito's formula, one can check that any $\pi$ solution of $A_{b}^* \pi (x) = 0$ is a stationary measure for the process $X$ solution of \eqref{eq: model lower bound}. From point 1 we know that $\pi$ is solution to $A_{b_\pi}^* \pi (x) = 0$ and so it is a stationary measure for the process $X$ whose drift is $b_\pi$. However, we have assumed $b_\pi$ to be a bounded Lipschitz function which satisfies A2 and, from Lemma 2 of \cite{Chapitre 4}, we know it is enough to ensure the existence of a Lyapounov and to show that the stationary measure of the equation with drift coefficient $b_\pi$ is unique. It follows it is equal to $\pi$.
\end{proof}
We recall that our purpose, in this section, is to clarify the link between the drift coefficient $b_\pi$ of the stochastic differential equation \eqref{eq: model lower bound} and the unique stationary distribution $\pi$. As a consequence of the second point of Proposition \ref{prop: b g}, it is achieved when $b_\pi$ is a bounded Lipschitz function which satisfies A2. We now introduce some assumptions on $\pi$ for which the associated drift $b_\pi$ has the wanted properties.\\
\\
\textbf{Ad:} Let $\pi : \mathbb{R}^d \rightarrow \mathbb{R}$ a probability density with regularity $\mathcal{C}^2$ such that, for any $x = (x_1, ... , x_d) \in \mathbb{R}^d$, $\pi (x) = c_n \prod_{j = 1}^d \pi_j(x_j) > 0$, where $c_n$ is a normalization constant. We suppose moreover that the following holds true for each $j \in \left \{ 1 , ... , d \right \}$:
\begin{enumerate}
    \item $\lim_{y \rightarrow \pm \infty} \pi_j (y) = 0$ and $\lim_{y \rightarrow \pm \infty} \pi'_j (y) = 0$.
    \item We denote $k_1:= \max_h ((a \cdot a^T)^{- 1})_{hh}$. There exists $\epsilon > 0$ such that $\epsilon < \frac{\epsilon_0}{|\gamma_j| d \, k_1}$, (with $|\gamma_j|$ the euclidean norm of the j-th line of the matrix $\gamma$ and $\epsilon_0$ the value appearing in the fifth point of Assumption A3), for which for any $y, z \in \mathbb{R}$, 
    $$\pi_j(y \pm z) \le c_2 e^{\epsilon ((a \cdot a^T)^{- 1})_{jj} |z| }\pi_j (y), $$
    where $c_2$ is some constant $> 0$.
    \item For $\epsilon > 0$ as in point 2 there exists $c_3(\epsilon) > 0$ such that 
    $$\sup_{y < 0} \frac{1}{\pi_j(y)} \int_{- \infty}^y \pi_j (w) dw < c_3(\epsilon), $$
     $$\sup_{y >0} \frac{1}{\pi_j(y)} \int_{y}^\infty \pi_j (w) dw < c_3(\epsilon). $$
    \item We denote  $k_2 := \max_{ l,h \in \left \{ 1, ... d \right \}} |(\gamma^T \cdot \gamma)_{l h}|$ and we recall that $\hat{c}$ is the constant appearing in the fifth point of A3.
    There exists $0 <\tilde{\epsilon} < \frac{ 1}{4 \, k_1^2 \, k_2 \, \hat{c} \,c_3(\epsilon) c_2^d c_5 }$, where $c_5$ is the constant that will be introduced below, in the fifth point of Ad, and there exists $R$ such that, for any $y$ : $|y| > \frac{R}{\sqrt{d}}$, 
    $$\frac{\pi'_j(y)}{\pi_j(y)} \le - \tilde{\epsilon} ((a \cdot a^T)^{- 1})_{jj} \, \sgn(y). $$
    Moreover, there exists a constant $c_4$ such that, for any $y \in \mathbb{R}$,
    $$|\frac{\pi'_j(y)}{\pi_j(y)}| \le c_4. $$
    \item For each $i \in \left \{ 1 , ... , d \right \}$, for any $x \in \mathbb{R}^d$ and for $\tilde{\epsilon}$ as in point 4 there exists a constant $c_5$ such that
    $$|\frac{\partial^2 \pi (x)}{ \partial x_i \partial x_j}| \le c_5 (a \cdot a^T)^{- 1})_{jj} (a \cdot a^T)^{- 1})_{ii} \, \tilde{\epsilon}^2 \pi (x).$$
    Moreover, there exists a constant $\tilde{c}_5$ such that, for any $x \in \mathbb{R}^d$, it is 
    $$|\frac{\partial^2 \pi (x)}{ \partial x_i \partial x_j}| \le \tilde{c}_5.$$
\end{enumerate}
Even though the just listed properties do not seem very natural and they have been introduced especially to make the associated drift function such that we can use the second point of Proposition \ref{prop: b g}, they are all satisfied by choosing a probability density in an exponential form, as we will see better in Lemma \ref{lemma: pi0 soddisfa Ad}.

The proof of the following proposition will be given in Section \ref{S: other proofs}.
\begin{proposition}
Suppose that $\pi$ satisfies Ad. Then $b_\pi$, defined as in \eqref{eq: def b sol A*}, is a bounded Lipschitz function which satisfies A2.
\label{prop: b pi proprieta}
\end{proposition}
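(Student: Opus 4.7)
The plan is to split each coordinate $b^i_\pi(x)$ into a \emph{continuous} part
$$b^{i,c}_\pi(x):=\frac{1}{2\pi(x)}\sum_{j=1}^d (a a^T)_{ij}\frac{\partial \pi}{\partial x_j}(x)=\frac{1}{2}\sum_{j=1}^d (a a^T)_{ij}\frac{\pi'_j(x_j)}{\pi_j(x_j)},$$
using the product structure $\pi(x)=c_n\prod_j \pi_j(x_j)$ from Ad, and a \emph{jump} part
$$b^{i,d}_\pi(x):=\pm\frac{1}{\pi(x)}\int_{\mp\infty}^{\pm\infty}\!\! A^*_{d,i}\pi(w_i)\,dw,$$
with the sign chosen as in \eqref{eq: def b sol A*}. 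I will verify boundedness, Lipschitz continuity and the drift condition A2 on each piece separately, the main obstacle being A2 where the jump perturbation must be controlled by the continuous part.

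For \emph{boundedness}, $b^{i,c}_\pi$ is immediately bounded by $\tfrac12\sum_j|(aa^T)_{ij}|c_4$ thanks to the uniform bound $|\pi'_j/\pi_j|\le c_4$ in point 4 of Ad. For $b^{i,d}_\pi$, I expand the integrand of $A^*_{d,i}\pi$ by a second order Taylor formula in the direction $(\gamma\cdot z)_i\mathbf e_i$: this replaces $A^*_{d,i}\pi(w_i)$ by an integral in $z$ of $(\gamma\cdot z)_i^2$ times a second derivative of $\pi$ evaluated at a shifted point. Using point 5 of Ad to bound the second derivative by a constant multiple of $\pi$ itself, the exponential shift estimate of point 2 to control $\pi_j$ at shifted arguments by $e^{\epsilon ((aa^T)^{-1})_{jj}|z|}\pi_j(x_j)$, and the integrability $\int |z|^2 e^{\epsilon_0|z|}F(z)\,dz\le\hat c$ from A3.5, we obtain $|A^*_{d,i}\pi(w_i)|\le C\,\pi(w_i)$ with $C$ independent of $w$. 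Dividing by $\pi(x)$ factorises into a product over $j\ne i$ (equal to one) and a univariate ratio $\pi_i(w)/\pi_i(x_i)$, which after integration over $w$ is bounded by $c_3(\epsilon)$ via point 3. Hence $b^{i,d}_\pi$ is bounded uniformly in $x$.

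\emph{Lipschitz continuity} follows by differentiating the explicit formula for $b^i_\pi$. The derivative $\partial_{x_k}b^{i,c}_\pi$ brings in second derivatives of $\pi$ divided by $\pi$, again controlled by point 5 of Ad. Differentiating $b^{i,d}_\pi$ with respect to $x_k$ for $k\ne i$ keeps the same integral structure and is handled by the same estimates as above, while differentiation with respect to $x_i$ additionally produces the boundary term $A^*_{d,i}\pi(x)/\pi(x)$, bounded by the same Taylor argument. In every case the result is a bounded function of $x$, giving a global Lipschitz constant.

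For the \emph{drift condition A2}, write $\langle x,b_\pi(x)\rangle=\sum_i x_i b^{i,c}_\pi(x)+\sum_i x_i b^{i,d}_\pi(x)$. In the continuous part,
$$\sum_i x_i b^{i,c}_\pi(x)=\frac12\sum_{j}\frac{\pi'_j(x_j)}{\pi_j(x_j)}\Bigl((aa^T)_{jj}x_j+\sum_{i\ne j}(aa^T)_{ij}x_i\Bigr).$$
Using the diagonal domination assumption on $aa^T$ and the sign and lower bound $\pi'_j(x_j)/\pi_j(x_j)\le -\tilde\epsilon\,((aa^T)^{-1})_{jj}\sgn(x_j)$ from point 4 (valid for $|x_j|>R/\sqrt d$), together with the crude bound $|\pi'_j/\pi_j|\le c_4$ on the complementary region, one obtains $\sum_i x_i b^{i,c}_\pi(x)\le -c\,\tilde\epsilon |x|+O(1)$ for $|x|\ge\tilde\rho$ with $\tilde\rho$ large enough that at least one coordinate exceeds $R/\sqrt d$. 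For the jump contribution, the Taylor/exponential estimate of the boundedness step actually gives $|b^{i,d}_\pi(x)|\le C\,k_1^2 k_2\,\hat c\,c_3(\epsilon)c_2^d c_5\,\tilde\epsilon^2$, so that $|\sum_i x_i b^{i,d}_\pi(x)|\le C'\tilde\epsilon^2 |x|$. The hypothesis $\tilde\epsilon<1/(4k_1^2 k_2\hat c\,c_3(\epsilon)c_2^d c_5)$ of point 4 of Ad is precisely tuned so that $C'\tilde\epsilon^2$ is strictly smaller than the constant in front of $\tilde\epsilon|x|$ from the continuous part, giving $\langle x,b_\pi(x)\rangle\le -C_1|x|$ for all $|x|\ge\tilde\rho$, which is A2. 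The main difficulty throughout is the dominance argument in this last step: it is exactly the reason the quantitative smallness conditions on $\epsilon$ and $\tilde\epsilon$ were built into Ad.
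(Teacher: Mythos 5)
Your proof follows essentially the same route as the paper's. Both decompose $b^i_\pi$ into the continuous term $\frac{1}{2}\sum_j (aa^T)_{ij}\,\pi'_j(x_j)/\pi_j(x_j)$ (controlled by Ad.4) plus the jump contribution $\frac{1}{\pi(x)}\int A^*_{d,i}\pi(w_i)\,dw$, which is then bounded by a second-order Taylor estimate on $A^*_{d,i}\pi$ combined with the exponential-shift inequality Ad.2, the second-derivative bound Ad.5, the moment condition A3.5, and the integral bound Ad.3 — producing exactly the $k_1^2 k_2\hat c\,c_3(\epsilon)c_2^d c_5\,\tilde\epsilon^2$ constant. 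For A2 both arguments show the continuous piece contributes $-c\tilde\epsilon|x_i|$ while the jump piece contributes at most $C'\tilde\epsilon^2|x_i|$, with the quantitative smallness of $\tilde\epsilon$ in Ad.4 making the continuous part win. The one organizational difference is that you symmetrize the double sum by summing over $j$ first, while the paper bounds $x_i b^i_\pi(x)$ coordinate by coordinate before summing over $i$; the estimates used are identical (note: both treatments are a bit cavalier about the sign subtlety in the off-diagonal terms when $\sgn(x_j)$ varies across $j$, but that looseness is already present in the paper). A genuine addition on your side: you sketch the Lipschitz verification by differentiating the formula for $b^i_\pi$ and invoking Ad.5 again, whereas the paper dispatches this requirement in a single unsupported sentence (``As $b$ is also Lipschitz, the result follows''). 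Since Lipschitz continuity is part of the claim, your sketch fills a gap the paper leaves implicit.
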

From Proposition \ref{prop: b pi proprieta} here above and the second point of Proposition \ref{prop: b g} it follows that, if we choose carefully the probability density such that all the properties gathered in Assumption Ad hold true, then $\pi$ is the unique stationary probability of the stochastic differential equation \eqref{eq: model lower bound} with drift coefficient $b_\pi$. \\
The next subsection is devoted to the building of two densities which satisfy the properties listed in Ad.

\subsection{Construction of the priors}{\label{S: construction priors}}
The proof of the lower bound is made by a comparison between the minimax risk introduced in \eqref{eq: def minimax risk} and some Bayesian risk where the Bayesian prior is supported on a set of two elements. We want to provide two drift functions belonging to $\Sigma(\beta, \mathcal{L})$ and, to do it, we introduce two probability densities defined on the purpose to make Ad hold true. We set 
$$\pi_0(x) := c_\eta \prod_{k = 1}^d \pi_{k,0} (x_k),$$
where $c_\eta$ is the constant that makes $\pi_0$ a probability measure. For any $y \in \mathbb{R}$ we define $\pi_{k,0} (y): = f(\eta (a \cdot a^T)^{-1}_{kk} |y|)$, where
\begin{equation*}
f (x) := \begin{cases} e^{- |x| } \qquad & \mbox{if } |x| \ge 1  \\
\in[1, e^{-1}] \qquad & \mbox{if } \frac{1}{2} < |x| < 1 \\
1 \qquad & \mbox{if } |x| \le \frac{1}{2}
\end{cases}
\end{equation*}
{ \modch and $\eta$ is a constant in $(0, \frac{1}{2})$ which plays the same role as $\epsilon$ and $\tilde{\epsilon}$ did in Ad, as it can be chosen as small as we want.} In particular we choose $\eta$ small enough to get $\pi_0 \in \mathcal{H}_d (\beta, \mathcal{L})$.
Moreover, $f$ is a $C^\infty$ function and it is such that, for any $x \in \mathbb{R}$,
\begin{itemize}
    \item[$\bullet$] $\frac{1}{2} e^{- |x|} \le f (x) \le 2 e^{- |x|}$, 
    \item[$\bullet$] $|f'(x)| \le 2 e^{- |x|}$, 
    \item[$\bullet$] $|f''(x)| \le 2 e^{- |x|}$. 
\end{itemize}
 The function $f$ has been introduced with the purpose of making $\pi_{k,0} (y) $ a $\mathcal{C}^\infty$ function for which all the conditions in Ad are satisfied. We state the following lemma, which will be proven in Section \ref{S: other proofs}.
\begin{lemma}
Let $\eta \in (0, \frac{1}{2})$. We suppose that the constant $\hat{c}$ introduced in the fifth point of A3 satisfies 
\begin{equation}
\hat{c} \le \frac{k_3}{4^{d + 4} k_1^2 k_2},
\label{eq: cond c hat}
\end{equation}
where $k_3 := \min_h ((a \cdot a^T)^{-1})_{h h}$ and $k_1$ and $k_2$ are as defined in Ad.
Then, taking $\eta = \epsilon = \tilde{\epsilon}$, the probability density $\pi_0$ satisfies Assumption Ad.
\label{lemma: pi0 soddisfa Ad}
\end{lemma}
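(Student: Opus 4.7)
The strategy is to check each of the five properties in assumption Ad by reducing everything to the three explicit quantitative controls on $f$ listed in the paper, namely
$\tfrac{1}{2}e^{-|x|}\le f(x)\le 2e^{-|x|}$, $|f'(x)|\le 2e^{-|x|}$, $|f''(x)|\le 2e^{-|x|}$,
together with the fact that $f\equiv e^{-|x|}$ on $\{|x|\ge 1\}$, where the verifications of point 4 will be cleanest. Throughout I would set $\eta=\epsilon=\tilde\epsilon$ and simply write $\lambda_k:=\eta((a\cdot a^T)^{-1})_{kk}$ to shorten the formulas, so that $\pi_{k,0}(y)=f(\lambda_k|y|)$. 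Smoothness and strict positivity of $\pi_0$, as well as the product-with-normalizing-constant form, are immediate from the construction of $f$.

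Point 1 follows at once from the upper bound $f(x)\le 2e^{-|x|}$ applied to $\pi_{k,0}$ and to $\pi_{k,0}'(y)=\lambda_k\,\sgn(y)\,f'(\lambda_k|y|)$. For Point 2, I would combine $|y\pm z|\ge |y|-|z|$ with the two-sided exponential envelope of $f$: this gives $\pi_{k,0}(y\pm z)\le 2e^{-\lambda_k(|y|-|z|)}\le 4e^{\lambda_k|z|}\pi_{k,0}(y)$, so that the constant $c_2=4$ works and the exponent $\epsilon\lambda_k$ is exactly that required. For Point 3, I would split the integral according to whether $|w|\ge\lambda_k^{-1}$ or not, use the envelope on the tail and the boundedness of $f$ on the compact part, and estimate $\int_{-\infty}^{y}\pi_{k,0}(w)dw\lesssim \lambda_k^{-1}e^{-\lambda_k|y|}$ for $y<0$; dividing by $\pi_{k,0}(y)\ge \tfrac12 e^{-\lambda_k|y|}$ yields $c_3(\epsilon)=O(1/\lambda_k)$.

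Point 4 is the place where the choice $\eta=\tilde\epsilon$ becomes critical. For $|y|\ge \lambda_k^{-1}$ one has $f\equiv e^{-|x|}$, so a direct computation gives $\pi_{k,0}'(y)/\pi_{k,0}(y)=-\lambda_k\sgn(y)$, which is exactly $-\tilde\epsilon((a\cdot a^T)^{-1})_{kk}\sgn(y)$; thus one just picks $R$ so that $|y|>R/\sqrt d$ forces $\lambda_k|y|\ge 1$ for every $k$. The global bound $|\pi_{k,0}'/\pi_{k,0}|\le 4\lambda_k$ comes from the ratio of the $f'$ upper bound to the $f$ lower bound and produces $c_4$. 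For Point 5, I would differentiate the product $\pi_0=c_\eta\prod_k\pi_{k,0}$: when $i\ne j$, $\partial_{x_i}\partial_{x_j}\pi_0=\pi_0\cdot(\pi_{i,0}'/\pi_{i,0})(\pi_{j,0}'/\pi_{j,0})$ and the previous step yields the factor $\tilde\epsilon^2 ((a\cdot a^T)^{-1})_{ii}((a\cdot a^T)^{-1})_{jj}$; when $i=j$, one uses the $f''$-bound in the same way, producing a similar quadratic-in-$\lambda_i$ factor. The uniform bound $\tilde c_5$ follows because $\pi_0$, $f'$ and $f''$ are all bounded.

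The main obstacle is keeping the book-keeping of constants coherent in Point 4: the inequality $\tilde\epsilon<(4k_1^2k_2\hat c\,c_3(\epsilon)\,c_2^d c_5)^{-1}$ couples $\tilde\epsilon=\eta$ with $c_3(\epsilon)$, which by the previous paragraph is itself of order $1/\eta$, so that the product $\tilde\epsilon\,c_3(\epsilon)$ reduces to a purely geometric constant comparable to $1/k_3$. Once this cancellation is made explicit together with $c_2=4$ and an explicit upper bound on $c_5$ coming from the $f''$ control, the required inequality becomes a condition on $\hat c$ alone, and one checks by direct numerical comparison that the hypothesis $\hat c\le k_3/(4^{d+4}k_1^2k_2)$ gives enough margin to close it. This is the only nontrivial step; the other four properties are straightforward once the envelope estimates on $f$ are in hand.
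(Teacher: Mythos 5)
Your proposal is correct and follows essentially the same route as the paper: verify each of the five points of Ad via the two-sided exponential envelope of $f$, extracting the explicit constants $c_2=4$, $c_3(\eta)\sim 4/(\eta k_3)$, $c_5=16$, and then noting that the crucial cancellation $\tilde\epsilon\,c_3(\epsilon)=O(1/k_3)$ reduces the condition on $\tilde\epsilon$ to the stated arithmetic bound $\hat c\le k_3/(4^{d+4}k_1^2k_2)$. The only cosmetic difference is your unnecessary split of the integral in Point 3 (the global envelope $f(x)\le 2e^{-|x|}$ suffices directly), which the paper avoids.
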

We remark that, as a consequence of the fifth point of A3 and of \eqref{eq: cond c hat} here above, the assumption required on the jumps in order to make $\pi_0$ satisfy Ad is that there exists $\epsilon_0$ such that 
$$\int_{\mathbb{R}^d \backslash \left \{ 0 \right \}}|z|^2 e^{\epsilon_0 |z|} F(z) dz \le \frac{k_3}{4^{d + 4} k_1^2 k_2}.$$
It means that the jumps have to behave well. In particular, they have to integrate an exponential function and such an integral has to be upper bounded by a constant which depends on the model. \\
\\
From Proposition \ref{prop: b pi proprieta}, we know that $b_{\pi_0}$ is a bounded lipschitz function which satisfies A2 and, using also the second point of Proposition \ref{prop: b g} it follows that $\pi_0$ is the unique stationary probability of $X^{(0)}$ solution of 
\begin{equation}
X^{(0)}_t= X^{(0)}_0 + \int_0^t b_{\pi_0}(X^{(0)}_s) ds + \int_0^t a \, dW_s + \int_0^t \int_{\mathbb{R}^d \backslash \left \{0 \right \}} \gamma \, \, z \, \tilde{\mu}(ds, dz).
\label{eq: EDS con pi0}
\end{equation}
It yields $b_{\pi_0} \in \Sigma (\beta, \mathcal{L})$, according to Definition \ref{def: insieme sigma}. To provide the second drift function belonging to $\Sigma (\beta, \mathcal{L})$ on which we want to apply the two hypothesis method, we introduce the probability measure $\pi_1$. We are given it as $\pi_0$ to which we add a bump: let $K: \mathbb{R} \rightarrow \mathbb{R}$ be a $C^\infty$ function with support on $[-1, 1]$ and such that
\begin{equation}
K(0)= 1, \qquad \int_{-1}^1 K(z) dz = 0.
\label{eq: proprieta K}
\end{equation}
We set
\begin{equation}
\pi_1 (x) := \pi_0 (x) + \frac{1}{M_T} \prod_{l = 1}^d K(\frac{x_l - x_0^l}{h_l(T)}),
\label{eq: def pi1}
\end{equation}
where $x_0 = (x_0^1, ... , x_0^d) \in \mathbb{R}^d$ is the point in which we are evaluating the minimax risk, as defined in \eqref{eq: def minimax risk}, $M_T$ and $h_l(T)$ will be calibrated later and satisfy $M_T \rightarrow \infty$ and, $\forall l \in \left \{ 1, ... , d \right \}$, $h_l(T) \rightarrow 0$ as $T \rightarrow \infty$. From the properties of the kernel function given in \eqref{eq: proprieta K} we obtain
$$\int_{\mathbb{R}^d} \pi_1(x) dx = \int_{\mathbb{R}^d} \pi_0(x) dx = 1.$$
Moreover, as $\pi_0 > 0$, $K$ has support compact and $\frac{1}{M_T} \rightarrow 0$, for T big enough we can say that $\pi_1 > 0$ as well. The fact of the matter consists of calibrating $M_T$ and $h_l(T)$ such that both the densities $\pi_0$ and $\pi_1$ belong to the anisotropic Holder class $\mathcal{H}_d (\beta, 2 \mathcal{L})$ (according with Definition \ref{def: insieme sigma} of $\Sigma(\beta, \mathcal{L})$) and the laws $\mathbb{P}_{b_{\pi_0}}$ and $\mathbb{P}_{b_{\pi_1}}$ are close. It will provide us some constraints, under which we will choose $M_T$ and $h_l(T)$ such that the lower bound on the minimax risk is as large as possible. In order to make the here above mentioned constraints explicit, we first of all need to evaluate how the two proposed drift functions differ in a neighbourhood of $x_0 \in \mathbb{R}^d$, as stated in the next proposition. Its proof will be given in Section \ref{S: other proofs}.
\begin{proposition}
Let us define the compact set of $\mathbb{R}^d$
$$K_T := [x_0^1 - h_1 (T), x_0^1 + h_1 (T)] \times ... \times [x_0^d - h_d (T), x_0^d + h_d (T)].$$
Then, for $T$ large enough, 
\begin{enumerate}
    \item For any $x \in K_T^c$ and $\forall i \in \left \{ 1, ... , d \right \}$: $|b^i_{{\pi_1}}(x) - b^i_{{\pi_0}} (x)| \le \frac{c}{M_T} $.
    \item For any $ i \in \left \{ 1, ... , d \right \}$: $\int_{K_T^c} |b^i_{{\pi_1}}(x) - b^i_{{\pi_0}} (x)| \pi_0 (x) dx \le \frac{c }{M_T} \prod_{l = 1}^d h_l (T)  $.
    \item For any $x \in K_T$ and $\forall i \in \left \{ 1, ... , d \right \}$: $|b^i_{{\pi_1}}(x) - b^i_{{\pi_0}} (x)| \le \frac{c}{M_T}\sum_{j = 1}^d \frac{1}{h_j(T)}$, where $c$ is a constant independent of $T$.
\end{enumerate}
\label{prop: differenza drift function}
\end{proposition}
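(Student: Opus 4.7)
Set $\Delta := \pi_1 - \pi_0 = \frac{1}{M_T}\prod_{l=1}^d K\bigl(\frac{x_l - x_0^l}{h_l(T)}\bigr)$; this bump is supported in $K_T$ and satisfies $\|\Delta\|_\infty \le c/M_T$, $\|\partial_j \Delta\|_\infty \le c/(M_T h_j(T))$, and $\|\Delta\|_1 \le (c/M_T)\prod_l h_l(T)$. Since the map $g \mapsto g(x)\,b^i_g(x)$ appearing in \eqref{eq: def b sol A*} is linear in $g$, I get the decomposition
$$b^i_{\pi_1}(x) - b^i_{\pi_0}(x) = \frac{1}{\pi_1(x)}\Bigl[\tfrac{1}{2}\sum_j (a\cdot a^T)_{ij}\partial_j \Delta(x) + \int_{-\infty}^{x_i} A^*_{d,i}\Delta(w_i)\,dw\Bigr] + \Bigl(\tfrac{1}{\pi_1(x)} - \tfrac{1}{\pi_0(x)}\Bigr)\,\pi_0(x)\,b^i_{\pi_0}(x),$$
treating $x_i<0$; the case $x_i \ge 0$ is symmetric. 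Each of the three claims reduces to a suitable bound on the three pieces.

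The analytic core is a non-local identity for $\int_{-\infty}^{x_i} A^*_{d,i}\Delta(w_i)\,dw$. Writing $\bar w_i(w) = \bar w_{i-1}(w - (\gamma\cdot z)_i)$, the change of variable $u = w - (\gamma\cdot z)_i$ and Fubini give
$$\int_{-\infty}^{x_i} A^*_{d,i}\Delta(w_i)\,dw = \Delta(x)\int_{\mathbb{R}^d}(\gamma\cdot z)_i F(z)\,dz - \int_{\mathbb{R}^d} F(z)\!\int_{x_i - (\gamma\cdot z)_i}^{x_i}\!\Delta\bigl(\bar w_{i-1}(u)\bigr)\,du\,dz,$$
where point A3.4 takes care of the $\alpha=1$ case. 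The inner $u$-integral runs over a set of length $|(\gamma\cdot z)_i|$ and is bounded crudely by $c|z|/M_T$; a Taylor expansion in $s = x_i - u$ around $u = x_i$, together with A3.5, yields the sharper size $|\gamma z|^2\|D^2\Delta\|_\infty$ needed for small $|z|$.

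For Part 1, on $K_T^c$ both $\Delta(x)$ and $\partial_j\Delta(x)$ vanish and $\pi_1 = \pi_0$, so only the non-local term survives. Its integrand is nonzero only when $(\gamma\cdot z)_k$ exceeds $|x_k - x_0^k| - h_k(T)$ for at least one coordinate $k$ placing $x$ outside $K_T$; hence the effective $z$-range is $|z| \gtrsim |x - x_0|$, and balancing the exponential tail of $F$ from A3.5 against the exponential lower bound $\pi_0(x) \ge c\exp\bigl(-\eta \sum_k ((a\cdot a^T)^{-1})_{kk}|x_k|\bigr)$ from Ad gives the uniform bound $c/M_T$. For Part 2 I multiply by $\pi_0(x)$ first and swap the orders: the inner double integral factors as $|(\gamma\cdot z)_i|\cdot\|\Delta\|_1 \le c|z|\prod_l h_l(T)/M_T$, and integration against $F(z)\,dz$ (with the Taylor compensation used for small $|z|$) produces the announced factor $\prod_l h_l(T)/M_T$. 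For Part 3, on $K_T$ the densities $\pi_0$ and $\pi_1 = \pi_0 + \Delta$ are both bounded below by a positive constant (continuity of $\pi_0$ at $x_0$ dominates $\|\Delta\|_\infty$ for $T$ large), so the prefactors are $O(1)$ and $O(1/M_T)$ respectively; the gradient piece contributes $\sum_j c/(M_T h_j(T))$, the non-local piece is $O(1/M_T)$ by the identity above, and the correction term is $O(1/M_T)$ using boundedness of $b^i_{\pi_0}$ from Proposition \ref{prop: b pi proprieta}.

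The genuine obstacle is the pointwise uniformity in Part 1: the exponential rate of decay of $\pi_0$ must be dominated by that of $F$, which is why the smallness assumption on $\hat c$ is built into Lemma \ref{lemma: pi0 soddisfa Ad}. The non-local identity itself is an algebraic manipulation, but recognising that the change of variable $u = w - (\gamma\cdot z)_i$ aligns $\bar w_i$ with $\bar w_{i-1}$ is what exposes the compensator $(\gamma\cdot z)_i\partial_i\Delta$ and permits the cancellations producing the right sizes in all three regimes.
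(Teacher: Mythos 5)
Your proposal is correct and follows essentially the same route as the paper: the same linearity-in-$g$ decomposition of $b^i_{\pi_1}-b^i_{\pi_0}$ into a $\frac{1}{\pi_1}\tilde I^i[\Delta]$ term plus a correction proportional to $\Delta/\pi_1$, the same exploitation of the compact support of $\Delta$, Fubini for Part 2, and the same balancing of the exponential tail of $F$ (A3.5, with $\eta$ small) against the exponential decay of $\pi_0$ for Part 1. Your ``non-local identity'' is a clean compression of what the paper does term-by-term: the paper splits $\tilde I^i_2[\Delta]$ into $\tilde I^i_{2,1},\tilde I^i_{2,2},\tilde I^i_{2,3}$ and analyses each; your change of variable $u=w-(\gamma\cdot z)_i$ recombines the first two into the short integral of width $|(\gamma\cdot z)_i|$ and identifies the third as $\Delta(x)\int(\gamma\cdot z)_iF(z)\,dz$ by the fundamental theorem of calculus. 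This is a genuine streamlining, though it does not change the substance of the estimates.

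Two minor imprecisions worth flagging. First, the appeal to ``Taylor compensation'' and to A3.4 for the $\alpha=1$ case is unnecessary: in the lower-bound framework of Section~\ref{S: minimax} the jump intensity is assumed finite ($\int F=\lambda_1<\infty$), and together with A3.5 this already gives $\int|z|F(z)\,dz<\infty$, so the crude bound $|(\gamma\cdot z)_i|\cdot\|\Delta\|_1$ integrates directly. Second, your description ``the effective $z$-range is $|z|\gtrsim|x-x_0|$'' is accurate only for the offending coordinates $j\le i$; for $j>i$ the argument of $\Delta(\bar w_{i-1}(u))$ contains $x_j$ itself (not $x_j-(\gamma\cdot z)_j$), so when $x_j$ leaves $[x_0^j-h_j,x_0^j+h_j]$ the integrand vanishes outright rather than constraining $z$. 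This is exactly the case-split the paper handles via the set $\mathcal{J}$; your conclusion is unaffected but the justification as phrased glosses over it.
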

Using Proposition \ref{prop: differenza drift function} it is possible to show that also $b_{\pi_1}$ belongs to $\Sigma (\beta, \mathcal{L})$, up to calibrate properly $M_T$ and $h_i(T)$, for $i \in \left \{ 1, ... , d \right \}$. 
\begin{lemma}
Let $\epsilon > 0$ and assume that, for all $T$ large, 
$$\frac{1}{M_T} \le \epsilon h_i(T)^{\beta_i} \qquad \forall i \in \left \{ 1, ... , d \right \}.$$
We suppose moreover that $\sum_{j = 1}^d \frac{1}{h_j(T)} = o(M_T)$ as $T \rightarrow \infty$. Then, if $\epsilon > 0$ is small enough, we have 
$$b_{\pi_1} \in \Sigma (\beta, \mathcal{L}),$$
for all $T$ sufficiently large.
\label{lemma: bpi1 in Sigma}
\end{lemma}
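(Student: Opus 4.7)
The plan is to verify, in turn, the three properties defining $\Sigma(\beta,\mathcal{L})$ for $b_{\pi_1}$: (i) anisotropic Hölder regularity $\pi_1 \in \mathcal{H}_d(\beta,2\mathcal{L})$; (ii) boundedness of $b_{\pi_1}$ together with the drift condition A2; (iii) Lipschitz continuity of $b_{\pi_1}$. All through the proof I will use the companion facts that $\pi_0$ satisfies Ad (Lemma \ref{lemma: pi0 soddisfa Ad}), so that $b_{\pi_0}$ is bounded, Lipschitz and satisfies A2 (Proposition \ref{prop: b pi proprieta}), and that $b_{\pi_1}$ is close to $b_{\pi_0}$ in the sense of Proposition \ref{prop: differenza drift function}.

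For (i) I would write $\pi_1 = \pi_0 + \psi_T$ with $\psi_T(x) := M_T^{-1}\prod_{l=1}^{d} K\big((x_l - x_0^l)/h_l(T)\big)$. Since $\pi_0 \in \mathcal{H}_d(\beta,\mathcal{L})$ by construction, it suffices to show $\psi_T \in \mathcal{H}_d(\beta,\mathcal{L})$ for $\epsilon$ small. Differentiating $k$ times in direction $i$ produces a factor $h_i(T)^{-k}$, so $\|D_i^k\psi_T\|_\infty \le C M_T^{-1} h_i(T)^{-k} \le C\epsilon\, h_i(T)^{\beta_i - k}$ by hypothesis, which is bounded by $C\epsilon$ as soon as $k \le \lfloor\beta_i\rfloor < \beta_i$. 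For the Hölder modulus at the top derivative, I would split into two regimes: for $|t| \le h_i(T)$, apply the mean value inequality using $\|D_i^{\lfloor\beta_i\rfloor+1}\psi_T\|_\infty \le C M_T^{-1} h_i(T)^{-\lfloor\beta_i\rfloor-1}$; for $|t| > h_i(T)$, use twice the pointwise bound $\|D_i^{\lfloor\beta_i\rfloor}\psi_T\|_\infty$. Both regimes give a bound of order $\epsilon\,|t|^{\beta_i-\lfloor\beta_i\rfloor}$, and choosing $\epsilon$ small absorbs the constants into $\mathcal{L}_i$; hence $\pi_1 = \pi_0 + \psi_T \in \mathcal{H}_d(\beta,2\mathcal{L})$ by the triangle inequality.

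For (ii), Proposition \ref{prop: differenza drift function} gives $|b_{\pi_1}^i(x) - b_{\pi_0}^i(x)| \le c/M_T$ on $K_T^c$ (point 1) and $|b_{\pi_1}^i(x) - b_{\pi_0}^i(x)| \le c M_T^{-1}\sum_{j=1}^d h_j(T)^{-1} = o(1)$ on $K_T$ (point 3, using the hypothesis $\sum_j h_j(T)^{-1} = o(M_T)$). Combined with boundedness of $b_{\pi_0}$, this yields a uniform bound on $b_{\pi_1}$ for $T$ large. For A2, note that $K_T$ is contained in a neighbourhood of $x_0$ shrinking with $T$, so for any $|x| \ge \tilde\rho' := \max(\tilde\rho, |x_0|+1)$ and $T$ large we have $x \in K_T^c$; then $|\langle x, b_{\pi_1}(x)-b_{\pi_0}(x)\rangle| \le c|x|/M_T$ and combining with A2 for $b_{\pi_0}$ gives $\langle x, b_{\pi_1}(x)\rangle \le -(C_1 - c/M_T)|x| \le -(C_1/2)|x|$ for $T$ large, which is A2.

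The main obstacle is (iii), the Lipschitz property of $b_{\pi_1}$ for each fixed (large) $T$. For each such $T$, $\pi_1$ is a strictly positive $C^\infty$ probability density agreeing with $\pi_0$ outside the compact $K_T$, so the explicit formula \eqref{eq: def b sol A*} produces a smooth $b_{\pi_1}$ on compacts provided $\pi_1$ stays bounded away from $0$; this holds since $\pi_0$ is bounded below on $K_T$ and the bump is $O(M_T^{-1})$, whence $\pi_1 \ge \pi_0/2$ on $K_T$ and $\pi_1 = \pi_0$ outside. Smoothness on compacts gives local Lipschitz continuity, and the global Lipschitz control at infinity reduces, since $\pi_1 \equiv \pi_0$ there, to a decay estimate on the tail contribution $\int_{-\infty}^{x_i} A_{d,i}^*\psi_T(w_i)\,dw$ coming from the bump. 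Splitting the non-local integral into small jumps (bounded via a second-order Taylor expansion against the integrable $|z|^2 F(z)\,dz$) and large jumps (bounded using the exponential moment from point 5 of A3), and combining with the Lipschitz property of $b_{\pi_0}$, yields the Lipschitz property of $b_{\pi_1}$. Hence $b_{\pi_1} \in \Sigma(\beta,\mathcal{L})$ for all $T$ sufficiently large.
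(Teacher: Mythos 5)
Your proposal follows the same overall strategy as the paper: show $\pi_1 \in \mathcal{H}_d(\beta,2\mathcal{L})$ by direct computation on the bump term, and transfer boundedness, A2, and Lipschitz continuity from $b_{\pi_0}$ to $b_{\pi_1}$ via Proposition~\ref{prop: differenza drift function} together with $\sum_j h_j(T)^{-1}=o(M_T)$. The two small differences are cosmetic in one case and a genuine improvement in the other. For the H\"older modulus, the paper uses the algebraic interpolation $|K^{\lfloor\beta_i\rfloor}(u)-K^{\lfloor\beta_i\rfloor}(v)|\le |K^{\lfloor\beta_i\rfloor}(u)-K^{\lfloor\beta_i\rfloor}(v)|^{\beta_i-\lfloor\beta_i\rfloor}(2\|K^{\lfloor\beta_i\rfloor}\|_\infty)^{1-(\beta_i-\lfloor\beta_i\rfloor)}$ combined with the mean value bound, whereas you split into $|t|\le h_i(T)$ and $|t|>h_i(T)$; these are two standard ways of writing the same estimate and yield identical bounds. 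For the Lipschitz property of $b_{\pi_1}$, you are in fact more careful than the paper, which simply asserts that since $b_{\pi_1}=b_{\pi_0}+o(1)$ ``the same goes for $b_{\pi_1}$''; this does not by itself imply Lipschitz continuity, and you correctly flag it as the main obstacle and sketch the structural argument (smoothness of $\pi_1$ on compacts, $\pi_1\equiv\pi_0$ outside $K_T$, decay of the nonlocal tail contribution from $A^*_{d,i}\psi_T$). Your proposal is correct and, if anything, fills a gap the paper glosses over.
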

\begin{proof}
From the first point of Proposition \ref{prop: differenza drift function} here above we know that, $\forall i \in \left \{ 1, ... , d \right \}$,
\begin{equation}
b^i_{\pi_1} = b^i_{\pi_0} + O(\frac{1}{M_T}) = b^i_{\pi_0} + o(1) \qquad \forall x \in K_T^c,
\label{eq: diff b su KT c}
\end{equation}
while the third point of Proposition \ref{prop: differenza drift function} provides us, 
\begin{equation}
b^i_{\pi_1} = b^i_{\pi_0} + O(\frac{1}{M_T} \sum_{j = 1}^d \frac{1}{h_j(T)} ) = b^i_{\pi_0} + o(1) \qquad \forall x \in K_T, 
\label{eq: diff b su KT}
\end{equation}
being the last equality a consequence of $\sum_{j = 1}^d \frac{1}{h_j(T)} = o(M_T)$, for $T$ going to $\infty$. We recall that we have built the density $\pi_0$ especially to apply Proposition \ref{prop: b pi proprieta} on $b_{\pi_0}$. Therefore, $b_{\pi_0}$ is a bounded Lipschitz function which satisfies A2 and, as a consequence of \eqref{eq: diff b su KT} and \eqref{eq: diff b su KT c}, the same goes for $b_{\pi_1}$. From the second point of Proposition \ref{prop: b g} it follows that $\pi_1$ is the unique stationary measure associated to $b_{\pi_1}$. As $\Sigma (\beta, \mathcal{L})$ is defined as in Definition \ref{def: insieme sigma}, the proof of the lemma is complete as soon as $\pi_1 \in \mathcal{H}_d (\beta, 2 \mathcal{L})$. Let us check the H\"older condition with respect to the i-th component. 
We first of all introduce the following notation: $\forall x \in \mathbb{R}^d$
\begin{equation}
d_T (x) := \pi_1 (x) - \pi_0 (x) = \frac{1}{M_T} \prod_{l = 1}^d K(\frac{x_l - x_0^l}{h_l(T)}).
\label{eq: def dT}
\end{equation}
For all $x \in \mathbb{R}^d$ and $t \in \mathbb{R}$ it is
$$|D_i^{\lfloor \beta_i \rfloor} \pi_1(x + t e_i) - D_i^{\lfloor \beta_i \rfloor} \pi_1(x)| \le |D_i^{\lfloor \beta_i \rfloor} \pi_0(x + t e_i) - D_i^{\lfloor \beta_i \rfloor} \pi_0(x)| + |D_i^{\lfloor \beta_i \rfloor} d_T (x + t e_i) - D_i^{\lfloor \beta_i \rfloor} d_T(x)| \le  $$
$$\le \mathcal{L}_i |t|^{\beta_i - \lfloor \beta_i \rfloor} + \frac{\left \| K \right \|_\infty^{d - 1}}{M_T (h_i (T))^{\lfloor \beta_i \rfloor}} |K^{\lfloor \beta_i \rfloor}(\frac{x_i + t - x_0^i}{h_i(T)}) - K^{\lfloor \beta_i \rfloor}(\frac{x_i - x_0^i}{h_i(T)}) |,$$
where we have used the definition of $d_T$ and the fact that $\pi_0 \in \mathcal{H}_d (\beta, \mathcal{L})$. We now observe that 
$$|K^{\lfloor \beta_i \rfloor}(\frac{x_i + t - x_0^i}{h_i(T)}) - K^{\lfloor \beta_i \rfloor}(\frac{x_i - x_0^i}{h_i(T)}) | \le$$
$$ \le |K^{\lfloor \beta_i \rfloor}(\frac{x_i + t - x_0^i}{h_i(T)}) - K^{\lfloor \beta_i \rfloor}(\frac{x_i - x_0^i}{h_i(T)}) |^{\beta_i - \lfloor \beta_i \rfloor} (2 \left \| K^{\lfloor \beta_i \rfloor} \right \|_{\infty})^{1 - (\beta_i - \lfloor \beta_i \rfloor)} \le $$
$$\le \left \| K^{\lfloor \beta_i \rfloor + 1} \right \|_{\infty}^{\beta_i - \lfloor \beta_i \rfloor} (\frac{|t|}{|h_i(T)|})^{\beta_i - \lfloor \beta_i \rfloor} (2 \left \| K^{\lfloor \beta_i \rfloor} \right \|_{\infty})^{1 - (\beta_i - \lfloor \beta_i \rfloor)}.$$
Therefore, defining 
$$c_K := \left \| K \right \|_\infty^{d - 1} \left \| K^{\lfloor \beta_i \rfloor + 1} \right \|_{\infty}^{\beta_i - \lfloor \beta_i \rfloor}(2 \left \| K^{\lfloor \beta_i \rfloor} \right \|_{\infty})^{1 - (\beta_i - \lfloor \beta_i \rfloor)},$$
it follows
$$|D_i^{\lfloor \beta_i \rfloor} \pi_1(x + t e_i) - D_i^{\lfloor \beta_i \rfloor} \pi_1(x)| \le \mathcal{L}_i |t|^{\beta_i - \lfloor \beta_i \rfloor} + \frac{c_K}{M_T (h_i (T))^{\beta_i}} |t|^{\beta_i - \lfloor \beta_i \rfloor}.$$
We have assumed that, $\forall i \in \left \{ 1, ... , d \right \}$, $\frac{1}{M_T } \le \epsilon h_i(T)^{\beta_i}$. We can choose $\epsilon$ small enough to ensure that $\epsilon \le \frac{\mathcal{L}_i}{c_K}$, obtaining
$$|D_i^{\lfloor \beta_i \rfloor} \pi_1(x + t e_i) - D_i^{\lfloor \beta_i \rfloor} \pi_1(x)| \le 2 \mathcal{L}_i |t|^{\beta_i - \lfloor \beta_i \rfloor}.$$
Moreover, from the definition of $\pi_1$ and the fact that $\pi_0 \in \mathcal{H}_d (\beta, \mathcal{L})$ we also get, for any $x \in \mathbb{R}^d$ and $k = 0, ... , \lfloor \beta_i \rfloor$
$$|D_i^{k} \pi_1(x)| \le \mathcal{L}_i + \frac{\left \| K \right \|_\infty^{d - 1} \left \| K^k \right \|_\infty}{M_T (h_i (T))^{k}}= : \mathcal{L}_i + \frac{c_K'}{M_T (h_i (T))^{k}}. $$
As $k \le \lfloor \beta_i \rfloor \le \beta_i$ it follows $h_i(T)^k \ge h_i(T)^{\beta_i}$ and so
$$\frac{1}{M_T } \le \epsilon h_i(T)^{\beta_i} \le \epsilon h_i(T)^{k}.$$
Again, it is enough to choose $\epsilon$ such that $\epsilon \le \frac{\mathcal{L}_i}{c_K'}$ to get
$$|D_i^{k} \pi_1(x)| \le 2 \mathcal{L}_i.$$
We have proven the required H\"older controls on the derivatives of $\pi_1$, the lemma follows.
\end{proof}
{\modch We remark that the two conditions on the calibration parameters provide $$\sum_{j = 1}^d \frac{h_j^{\beta_j}(T)}{h_j(T)} = o(1),$$
which is always true as we have asked $\beta_j > 1$ for any $j \in \{ 1, ... , d \}$ in Definition \ref{def: insieme sigma}.}

\subsection{Proof of Theorem \ref{th: lower bound}}{\label{S: proof teo minimax}}
\begin{proof}

We first of all recall the notations previously introduced. We denote $\mathbb{P}_b$ the law of the stationary solution of \eqref{eq: model lower bound} on the canonical space $C([0, \infty), \mathbb{R}^d)$ and $\mathbb{E}_b$ the corresponding expectation; we also denote as $\mathbb{P}_b^{(T)}$ and $\mathbb{E}_b^{(T)}$ their restrictions on $C([0, T], \mathbb{R}^d)$.
For any measurable function $\tilde{\pi}_T (x_0): C([0, T], \mathbb{R}^d) \rightarrow \mathbb{R}$ we will estimate by below, for $T$ large, its risk
$$R(\tilde{\pi}_T (x_0)) := \sup_{b \in \Sigma (\beta, \mathcal{L})} \mathbb{E}_b^{(T)}[(\tilde{\pi}_T (x_0) - \pi_b (x_0))^2].$$
We want to use the two hypothesis method based on the two drift functions $b_{\pi_0}$ and $b_{\pi_1}$ which, therefore, have to belong to $\Sigma (\beta, \mathcal{L})$. It is $ b_{\pi_0} \in \Sigma (\beta, \mathcal{L})$ by construction. Moreover, from Lemma \ref{lemma: bpi1 in Sigma} we know in detail the constraints required on the calibrations $M_T$ and $h_i(T)$ in order to get $b_{\pi_1}$ belonging to $\Sigma (\beta, \mathcal{L})$. We therefore assume that the following conditions hold true:
\begin{equation}
\frac{1}{M_T} \le \epsilon h_i(T)^{\beta_i} \qquad \forall i \in \left \{ 1, ... , d \right \},
\label{eq: calib holder}
\end{equation}
\begin{equation}
\sum_{j = 1}^d \frac{1}{h_j(T)} = o(M_T), \quad \mbox{as } T \rightarrow \infty.
\label{eq: calib b}
\end{equation}
As $ b_{\pi_0}, b_{\pi_1}  \in \Sigma (\beta, \mathcal{L})$ we have 
$$R(\tilde{\pi}_T (x_0)) \ge \frac{1}{2} \mathbb{E}_{b_{\pi_1}}^{(T)}[(\tilde{\pi}_T (x_0) - \pi_1 (x_0))^2] + \frac{1}{2} \mathbb{E}_{b_{\pi_0}}^{(T)}[(\tilde{\pi}_T (x_0) - \pi_0 (x_0))^2].$$
In order to lower bound the right hand side we need the following lemma, which will be showed in Section \ref{S: other proofs}.
\begin{lemma}
\begin{enumerate}
    \item The measure $\mathbb{P}^{(T)}_{b_{\pi_1}}$ is absolutely continuous with respect to $\mathbb{P}^{(T)}_{b_{\pi_0}}$.
    \item We denote $Z^{(T)} := \frac{d \mathbb{P}^{(T)}_{b_{\pi_1}}}{d \mathbb{P}^{(T)}_{b_{\pi_0}}}$ and we assume that
    \begin{equation}
    \sup_{T \ge 0} T \int_{\mathbb{R}^d} |b_{\pi_1} (x) - b_{\pi_0}(x)|^2 \pi_0(x) \, dx < \infty.
    \label{eq: calib int b}
    \end{equation}
    Then, there exist $C$ and $\lambda > 0$ such that, for all $T$ large enough,
    $$\mathbb{P}^{(T)}_{b_{\pi_0}} (Z^{(T)} \ge \frac{1}{\lambda}) \ge C.$$
\end{enumerate}
\label{lemma: Girsanov}
\end{lemma}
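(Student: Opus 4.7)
For part (1), the plan is to invoke Girsanov's theorem for stochastic differential equations with jumps. The crucial observation is that under both $\mathbb{P}_{b_{\pi_0}}$ and $\mathbb{P}_{b_{\pi_1}}$ the diffusion matrix $a$, the jump coefficient $\gamma$, and the Lévy measure $F$ are identical; only the drift changes. Consequently the jump part of the likelihood ratio is trivial and the change of measure affects only the Brownian component. Writing $H(x) := a^{-1}\bigl(b_{\pi_1}(x) - b_{\pi_0}(x)\bigr)$, which is bounded because $a$ is a constant invertible matrix and both drifts lie in $\Sigma(\beta,\mathcal{L})$ (so are bounded), Novikov's condition is automatic and the Radon--Nikodym derivative takes the classical exponential form
\begin{equation*}
Z^{(T)} = \exp\!\left(\int_0^T H(X_s)^\top dW_s - \tfrac{1}{2}\int_0^T |H(X_s)|^2 \, ds\right),
\end{equation*}
where $W$ denotes the Brownian motion driving the SDE under $\mathbb{P}_{b_{\pi_0}}$. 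This gives $\mathbb{P}^{(T)}_{b_{\pi_1}} \ll \mathbb{P}^{(T)}_{b_{\pi_0}}$.

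For part (2), set $L_T := \int_0^T H(X_s)^\top dW_s$, so that $\log Z^{(T)} = L_T - \tfrac{1}{2}\langle L\rangle_T$ with $\langle L\rangle_T = \int_0^T |H(X_s)|^2 ds$. The first step is to control $\mathbb{E}_{b_{\pi_0}}[\langle L\rangle_T]$ by using the stationarity of $X$ under $\mathbb{P}_{b_{\pi_0}}$ together with Fubini: this gives
\begin{equation*}
\mathbb{E}_{b_{\pi_0}}\!\left[\langle L\rangle_T\right] = T \int_{\mathbb{R}^d} |H(x)|^2 \pi_0(x)\, dx \le C_0,
\end{equation*}
uniformly in $T$, thanks to assumption \eqref{eq: calib int b} and the boundedness of $a^{-1}$.

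The second step is a soft concentration argument. Markov's inequality applied to $\langle L \rangle_T$ gives $\mathbb{P}_{b_{\pi_0}}(\langle L\rangle_T \le 4C_0) \ge 3/4$. Since $L$ is a square-integrable martingale with $\mathbb{E}_{b_{\pi_0}}[L_T^2] = \mathbb{E}_{b_{\pi_0}}[\langle L\rangle_T] \le C_0$, Chebyshev's inequality with $K = 4\sqrt{C_0}$ yields $\mathbb{P}_{b_{\pi_0}}(|L_T| \le 4\sqrt{C_0}) \ge 15/16$. A union bound on the complements shows that both events hold simultaneously with probability at least $11/16$. On this intersection, $\log Z^{(T)} \ge -4\sqrt{C_0} - 2C_0$, which yields $Z^{(T)} \ge 1/\lambda$ for $\lambda := \exp(4\sqrt{C_0} + 2C_0)$. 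Choosing $C := 11/16$ proves the claim.

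The only delicate point is the formal justification of Girsanov in the jump-SDE framework: one must verify that the presence of $\int\!\!\int \gamma z \tilde{\mu}(ds,dz)$ truly contributes nothing to the density since it is pathwise identical under both measures. Everything else is elementary. Note that the argument uses the assumption $T \int |b_{\pi_1} - b_{\pi_0}|^2 \pi_0 \, dx = O(1)$ only through its effect on $\mathbb{E}_{b_{\pi_0}}[\langle L\rangle_T]$; no pathwise control of $\langle L\rangle_T$ is needed, which is why the Markov step suffices.
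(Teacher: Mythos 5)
Your strategy coincides with the paper's (Girsanov change of drift, then a Markov/Chebyshev concentration on the log-likelihood), and the arithmetic in part (2) is clean. However, there is one genuine gap in part (1) that propagates into part (2).

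The laws $\mathbb{P}^{(T)}_{b_{\pi_0}}$ and $\mathbb{P}^{(T)}_{b_{\pi_1}}$ are laws of \emph{stationary} solutions of \eqref{eq: model lower bound}, so the initial conditions $X_0$ are not fixed nor identically distributed: under $\mathbb{P}^{(T)}_{b_{\pi_0}}$ the law of $X_0$ is $\pi_0$, and under $\mathbb{P}^{(T)}_{b_{\pi_1}}$ it is $\pi_1$. The Girsanov exponential you wrote is the density of the path law \emph{conditional on a common initial point}; the full Radon--Nikodym derivative must therefore carry the prefactor coming from the marginal of $X_0$, i.e.
\begin{equation*}
Z^{(T)} = \frac{\pi_1}{\pi_0}(X_0)\,\exp\!\left(\int_0^T H(X_s)^\top dW_s - \tfrac{1}{2}\int_0^T |H(X_s)|^2\,ds\right).
\end{equation*}
Your proof of part (2) never accounts for this factor. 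The fix is easy and is exactly what the paper does: since $d_T := \pi_1 - \pi_0$ is supported in a compact set of bounded size and $\|d_T\|_\infty \le c/M_T \to 0$, the ratio $\pi_1/\pi_0$ converges uniformly to $1$ and in particular is bounded below by a constant $c_0 > 0$ for $T$ large. One can then split $\mathbb{P}(Z^{(T)} \ge 1/\lambda)$ into the event $\{\pi_1/\pi_0(X_0) \ge c_0\}$ (which has probability one for $T$ large) intersected with the exponential-part event you controlled; the constant $\lambda$ picks up the extra factor $1/c_0$. With that one line added, your argument is complete and is in substance identical to the paper's, which bounds $|\log \mathcal{E}^{(T)}| \le |M_T| + |I_T|$ and applies Markov and Chebyshev with $\mathbb{E}[M_T^2] = 2\mathbb{E}[I_T]$, exactly mirroring your decomposition into $L_T$ and $\langle L\rangle_T$.

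One further minor caveat: the justification of absolute continuity needs a reference appropriate to SDEs with jumps (the paper cites Theorems III.5.19 and IV.4.39 of Jacod--Shiryaev). Your remark that the jump part contributes nothing because $\gamma$ and $F$ are unchanged is the right intuition, but it is this general-semimartingale result, not the classical Novikov condition for Brownian diffusions alone, that makes it rigorous in the jump setting.
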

From \eqref{eq: calib int b} it turns out another condition on the calibration quantities. Indeed, using all the three points of Proposition \ref{prop: differenza drift function}, it is
$$\int_{\mathbb{R}^d} |b_{\pi_1} (x) - b_{\pi_0}(x)|^2 \pi_0(x) \, dx = \int_{K_T} |b_{\pi_1} (x) - b_{\pi_0}(x)|^2 \pi_0(x)\, dx + \int_{K_T^c} |b_{\pi_1} (x) - b_{\pi_0}(x)|^2 \pi_0(x) \, dx$$
$$ \le \frac{c}{M_T^2} (\prod_{l = 1}^d h_l(T)) (\sum_{j = 1}^d \frac{1}{h_j^2(T)}) + \frac{c}{M_T} \frac{1}{M_T} \prod_{l = 1}^d h_l(T) \le \frac{c}{M_T^2} (\prod_{l = 1}^d h_l(T)) (\sum_{j = 1}^d \frac{1}{h_j^2(T)}),$$
as $h_j(T)$ goes to $0$ for $T$ going to infinity and so the second term here above is negligible compared to the first one.
It provides us the constraints on the calibration
\begin{equation}
\sup_{T \ge 0} T \frac{c}{M_T^2} (\prod_{l = 1}^d h_l(T)) (\sum_{j = 1}^d \frac{1}{h_j^2(T)}) < \infty,
\label{eq: calib finale}
\end{equation}
that we need to require in order to apply Lemma \ref{lemma: Girsanov} here above. From Lemma \ref{lemma: Girsanov}, as $Z^{(T)}$ exists, we can write
$$R(\tilde{\pi}_T (x_0)) \ge \frac{1}{2} \mathbb{E}_{b_{\pi_0}}^{(T)}[(\tilde{\pi}_T (x_0) - \pi_1 (x_0))^2 Z^{(T)}] + \frac{1}{2} \mathbb{E}_{b_{\pi_0}}^{(T)}[(\tilde{\pi}_T (x_0) - \pi_0 (x_0))^2]  \ge$$
$$\ge \frac{1}{2 \lambda} \mathbb{E}_{b_{\pi_0}}^{(T)}[(\tilde{\pi}_T (x_0) - \pi_1 (x_0))^2 1_{\left \{ Z^{(T)} \ge \frac{1}{\lambda} \right \}}] + \frac{1}{2} \mathbb{E}_{b_{\pi_0}}^{(T)}[(\tilde{\pi}_T (x_0) - \pi_0 (x_0))^2 1_{\left \{ Z^{(T)} \ge \frac{1}{\lambda} \right \}}] = $$
$$ = \frac{1}{2 \lambda} \mathbb{E}_{b_{\pi_0}}^{(T)}\big[[(\tilde{\pi}_T (x_0) - \pi_1 (x_0))^2+ (\tilde{\pi}_T (x_0) - \pi_0 (x_0))^2] 1_{\left \{ Z^{(T)} \ge \frac{1}{\lambda} \right \}}\big],$$
for all $\lambda> 1$. We remark it is 
$$(\tilde{\pi}_T (x_0) - \pi_1 (x_0))^2+ (\tilde{\pi}_T (x_0) - \pi_0 (x_0))^2 \ge (\frac{\pi_1 (x_0) - \pi_0 (x_0)}{2})^2$$
and so we obtain 
$$R(\tilde{\pi}_T (x_0)) \ge \frac{1}{8 \lambda}(\pi_1 (x_0) - \pi_0 (x_0))^2 \mathbb{P}_{b_{\pi_0}}^{(T)}(Z^{(T)} \ge \frac{1}{\lambda}).$$
We recall that $\pi_0$ and $\pi_1$ have been built in Section \ref{S: construction priors} and in particular, since $\pi_1$ has been defined as below \eqref{eq: proprieta K}, it is
$$\pi_1 (x_0) - \pi_0 (x_0) = \frac{1}{M_T} \prod_{l = 1}^d K(0) = \frac{1}{M_T},$$
where we have also used that $K(0) = 1$, as stated in \eqref{eq: proprieta K}. Moreover from Lemma \ref{lemma: Girsanov} we know that for some $\lambda$, as soon as \eqref{eq: calib finale} holds,
$$\mathbb{P}_{b_{\pi_0}}^{(T)}(Z^{(T)} \ge \frac{1}{\lambda}) > 0. $$ We deduce that, if \eqref{eq: calib holder}, \eqref{eq: calib b} and \eqref{eq: calib finale} are satisfied, then
\begin{equation}
R(\tilde{\pi}_T (x_0)) \ge \frac{c}{M_T^2},
\label{eq: risk avec MT}
\end{equation}
for $c > 0$. Hence, we have to find the largest choice for $\frac{1}{M_T^2}$, subject to the constraints \eqref{eq: calib holder}, \eqref{eq: calib b} and \eqref{eq: calib finale}.

{\modch We observe that \eqref{eq: calib b} can be seen as $\sum_{j = 1}^d \frac{h_j(T)^{\beta_j}}{h_j (T)} =  o(1)$, which holds true as in Definition \ref{def: Holder} of H\"older space we have asked $\beta_j > 1$ $\forall j \in \left \{ 1, ... , d \right \}$. Regarding the other conditions, we suppose at the beginning to saturate \eqref{eq: calib holder} for any $j \in \left \{ 1, ... , d \right \}$. From the order of $\beta$ we obtain 
\begin{equation}
h_1 (T) = h_2 (T)^{\frac{\beta_2}{\beta_1}} \le h_2(T) \le ... \le h_d(T).
\label{eq: order h}
\end{equation}
We plug it in \eqref{eq: calib finale} and we observe that the biggest term in the sum is $ \frac{\prod_{l \neq 1} h_l(T)}{h_1(T)}$.}
In order to make it as small as possible, we decide to increment $h_1(T)$, such that condition \eqref{eq: calib holder} is no longer saturated for $j=1$. In particular, we increase $h_1(T)$ up to get $h_1(T) = h_2(T)$, remarking that it is not an improvement to take $h_1(T)$ also bigger than $h_2(T)$ because otherwise $ \frac{\prod_{l \neq 2} h_l(T)}{h_2(T)}$ would be the biggest term, and it would be larger than $ \frac{\prod_{l \neq 1} h_l(T)}{h_1(T)}$ for $h_1(T) = h_2(T)$. Then, we have the possibility to no longer saturate condition \eqref{eq: calib holder} also for other $j$, which means to increase some $h_j(T)$. However, it implies the worse term $ \frac{\prod_{l \neq 1} h_l(T)}{h_1(T)}$ to be bigger, and so it does not consist in a good choice. Finally, we take $h_j(T)$ which saturates \eqref{eq: calib holder} for any $j \neq 1$ and $h_1(T) = h_2(T)$. Replacing them in \eqref{eq: calib finale}, we get the following condition:
\begin{equation}
\sup_{T} T \frac{1}{M_T^2} \prod_{l \ge 3} h_l(T) \le c.
\label{eq: cond finale scelta calib}
\end{equation}
Now it is
\begin{equation}
\prod_{l \ge 3} h_l(T) = (\frac{1}{M_T})^{\sum_{l \ge 3} \frac{1}{\beta_l}} =(\frac{1}{M_T})^{\frac{d-2}{{\modch \barfix{\bar{\beta}_3}}}}.
\label{eq: beta bar 3}
\end{equation}
Replacing \eqref{eq: beta bar 3} in \eqref{eq: cond finale scelta calib}, it leads us to the choice
$$\frac{1}{M_T} = (\frac{1}{T})^{\frac{{\modch \barfix{\bar{\beta}_3}} }{2 {\modch \barfix{\bar{\beta}_3}}  + d - 2  }}.$$
Plugging the value of $M_T$ in \eqref{eq: risk avec MT} we obtain that, for any possible estimator $\tilde{\pi}_T$ of the invariant density, it is 
$$R(\tilde{\pi}_T (x_0)) \ge  (\frac{1}{T})^{\frac{2 \, {\modch \barfix{\bar{\beta}_3}}}{2 {\modch \barfix{\bar{\beta}_3}}  + d - 2  }}.$$
The wanted lower bound on the minimax risk defined in \eqref{eq: def minimax risk} follows.
\end{proof}

\section{Proofs}{\label{S: other proofs}}
This section is devoted to the proofs of the technical results we have introduced in the previous section.
\subsection{Proof of Lemma \ref{lemma: adjoint operator}}
\begin{proof}
We aim at making explicit the adjoint operator of $A$, {\modch the generator of the process solution to \eqref{eq: model lower bound}}, on $L^2(\mathbb{R}^d)$. It is such that, for $f, g$ belonging to the set $\mathcal{C}$ as introduced in Section \ref{S: explicit link},
$$\int_{\mathbb{R}^d} A f (x) g(x) dx = \int_{\mathbb{R}^d} f(x) A^*_b g (x) dx.$$
We start analysing the continuous part of the generator of \eqref{eq: model lower bound}, $A$. From \eqref{eq: definition generator}, a repeated use of integration by parts and the fact that the function $g$ vanishes for $x_i$ going to $\pm \infty$ for any $i \in \left \{ 1, ... , d \right \}$ we get 
$$\int_{\mathbb{R}^d} A_c f (x) g(x) dx = \frac{1}{2} \sum_{i,j = 1}^d  (a \cdot a^T)_{ij} \int_{\mathbb{R}^d} \frac{\partial^2 f(x)}{\partial x_i \partial x_j } g(x) dx + \sum_{i = 1}^d \int_{\mathbb{R}^d} b^i(x) \frac{\partial f(x)}{\partial x_i}   g(x) dx  $$
$$= \frac{1}{2} \sum_{i,j = 1}^d  (a \cdot a^T)_{ij} \int_{\mathbb{R}^d} f(x) \frac{\partial^2  g(x)}{\partial x_i \partial x_j } dx - \sum_{i = 1}^d \int_{\mathbb{R}^d} f(x) \frac{\partial}{\partial x_i} (b^i g) (x) dx $$
\begin{equation}
= \int_{\mathbb{R}^d} f(x) \big[ \frac{1}{2} \sum_{i,j = 1}^d  (a \cdot a^T)_{ij} \frac{\partial^2 g(x)}{\partial x_i \partial x_j }  dx - \sum_{i = 1}^d (g(x) \frac{\partial  b^i (x)}{\partial x_i} + \frac{\partial g (x)}{\partial x_i}  b^i(x) ) \big] dx = :\int_{\mathbb{R}^d} f(x) A^*_{c,b} g(x) dx.
\label{eq: A*c}
\end{equation}
We now look for the adjoint operator of the discrete part of the generator $A_d$ as defined in \eqref{eq: definition generator}. It is
$$\int_{\mathbb{R}^d} A_d f (x) g(x) dx  = \int_{\mathbb{R}^d} (\int_{\mathbb{R}^d} f (x + \gamma \cdot z) F(z) dz) g(x) dx $$
$$ - \int_{\mathbb{R}^d}f(x) (\int_{\mathbb{R}^d} F(z) dz) g(x) dx -  \int_{\mathbb{R}^d} (\int_{\mathbb{R}^d} \gamma \cdot z \cdot \nabla f(x)  F(z) dz) g(x) dx = : I_1 + I_2 + I_3$$
We evaluate first of all $I_1$, on which we operate the change of variable $u := x + \gamma \cdot z$. It provides us
$$I_1 = \int_{\mathbb{R}^d} \big( \int_{\mathbb{R}^d } |\gamma^{-1}| f(u) F(\gamma^{-1} (u - x)) du \big) g(x) dx = \int_{\mathbb{R}^d} f(u) \big( \int_{\mathbb{R}^d } |\gamma^{-1}| F(\gamma^{-1} (u - x)) g(x) dx \big) du, $$
with the last equality which follows from Fubini theorem. We recall that $|\gamma^{-1}|$ stands for the absolute value of the determinant of the matrix $\gamma^{-1}$.
Regarding $I_2$, one can clearly isolate the adjoint part without further computations as $( - \int_{\mathbb{R}^d} F(z) dz) g(x)$. The last term left to deal with is $I_3$. From integration by parts and once again the fact that $g$ vanishes for $x_i$ going to $\pm \infty$ we obtain
$$I_3 = \int_{\mathbb{R}^d} (\int_{\mathbb{R}^d} \gamma \cdot z \cdot \nabla g(x) F(z) dz) f(x) dx.$$
Therefore
$$A^*_d g (x)=  \int_{\mathbb{R}^d } |\gamma^{-1}|  F(\gamma^{-1} \cdot (x - y))g(y) dy - \int_{\mathbb{R}^d } F(z) dz g(x) + \int_{\mathbb{R}^d} \gamma \cdot z \cdot \nabla g(x)  F(z) dz$$
\begin{equation}
 = \int_{\mathbb{R}^d }[g(x - \gamma \cdot z) - g(x) + \gamma \cdot z \cdot \nabla g(x)] F(z) dz,
\label{eq: A*d}
\end{equation}
where we have also changed the variable $\gamma^{-1} \cdot (x - y) = z$ in the first integral.
From \eqref{eq: A*c} and \eqref{eq: A*d} the lemma follows.
\end{proof}

\subsection{Proof of Proposition \ref{prop: b pi proprieta}}
\begin{proof}
We start proving that $b_{\pi}$ is bounded. We can assume WLOG $x_i < 0$, if $x_i > 0$ an analogous reasoning applies.
As $\pi$ is in a multiplicative form, we can compute
$$\frac{1}{\pi (x)} \int_{- \infty}^{x_i} \frac{1}{2} \sum_{j = 1}^d (a \cdot a^T)_{i j} \frac{\partial^2 \pi}{\partial x_i \partial x_j} (w_i) dw = \frac{1}{\pi (x)} \frac{1}{2} \sum_{j = 1}^d (a \cdot a^T)_{i j}(\frac{\partial \pi}{ \partial x_j} (x) - \frac{\partial \pi}{ \partial x_j} (x_1, ... , -\infty, .. , x_d))  $$
$$= \frac{1}{2} \sum_{j = 1}^d (a \cdot a^T)_{i j} (\frac{\pi'_j(x_j)}{\pi_j (x_j)} - \frac{\pi'_j( - \infty)}{\pi_j (x_j)}) = \frac{1}{2} \sum_{j = 1}^d (a \cdot a^T)_{i j} \frac{\pi'_j(x_j)}{\pi_j (x_j)},$$
where we have also used that, for the first point of $Ad$, $\pi'_j( - \infty) = 0$. Comparing the equation here above with the definition \eqref{eq: def b sol A*} of $b^i$ one can see that, for all $x \in \mathbb{R}^d$, $x_i <0$,
$$b^i_{\pi} (x) = \frac{1}{2} \sum_{j = 1}^d (a \cdot a^T)_{i j} \frac{\pi'_j(x_j)}{\pi_j (x_j)} + \frac{1}{\pi(x)} \int_{- \infty}^{x_i} A^*_{d, i} \pi(w_i) dw =: I_1[\pi] + I_2[\pi].$$
From the fourth point of Ad it easily follows that there exists a constant $c > 0$ for which
\begin{equation}
 |I_1[\pi] | \le c.
 \label{eq: I1 bounded}
\end{equation}
Regarding $I_2[\pi]$, we start evaluating, for any $x \in \mathbb{R}^d$ 
$$A^*_{d, i} \pi(x) = \int_{\mathbb{R}^d }[\pi(\bar{x}_i) - \pi(\bar{x}_{i-1}) + (\gamma \cdot z)_i  \frac{\partial }{\partial x_i} \pi(x)] F(z) dz,$$
where $\bar{x}_i = (x_1 - (\gamma \cdot z)_1, ... , x_i - (\gamma \cdot z)_i, x_{i + 1}, ... , x_d )$.
From intermediate value theorem we have 
$$|A^*_{d, i} \pi(x)| \le \int_{\mathbb{R}^d} \int_0^1 (\gamma \cdot z)^2_i |\frac{\partial^2}{\partial x_i^2} \pi (x_1 - (\gamma \cdot z)_1, ... , x_i - s (\gamma \cdot z)_i, x_{i + 1}, ... , x_d)  F(z) | ds dz.$$
The fifth point of Ad (with the notation introduced in the fouth one) provides us an upper bound on the second derivative of $\pi$ which yields, using also that $\pi$ is in a multiplicative form,
$$|A^*_{d, i} \pi(x)| \le c_n k_1^2 \tilde{\epsilon}^2 c_5 \int_{\mathbb{R}^d} \int_0^1 (\gamma \cdot z)^2_i  \prod_{j < i} \pi_j(x_j - (\gamma \cdot z)_j) \pi_i (x_i - s (\gamma \cdot z)_i) \prod_{j > i} \pi_j (x_j) ds F(z) dz.$$
\begin{equation}
\le c_n d\, k_1^2 \, k_2 \, c_5 \, \tilde{\epsilon}^2  \int_{\mathbb{R}^d} \int_0^1 |z|^2 \prod_{j < i} \pi_j(x_j - (\gamma \cdot z)_j) \pi_i (x_i - s (\gamma \cdot z)_i) \prod_{j > i} \pi_j (x_j) ds F(z) dz.
\label{eq: stima A*d pi}    
\end{equation}
The second point of Ad yields
$$\pi_i(x_i - s (\gamma \cdot z)_i)  \le c_2 e^{\epsilon (a \cdot a^T)^{-1}_{ii} |s \sum_{k = 1}^d \gamma_{i k} z_k|}\pi_i (x_i).$$
We apply exactly the same on $\pi_j(x_j - (\gamma \cdot z)_j)$, for $j < i$ and we replace them in the right hand side of \eqref{eq: stima A*d pi}, recalling that $|s| < 1$. We obtain it is upper bounded by
$$ d \, k_1^2 \, k_2 \,c_2^i \, c_n \, c_5  \tilde{\epsilon}^2 \prod_{j = 1}^d \pi_j (x_j) \int_{\mathbb{R}^d}  |z|^2 e^{\epsilon (a \cdot a^T)^{-1}_{jj} | \sum_{j \le i } \sum_{  k = 1}^d \gamma_{j k} z_k|} F(z) dz.$$
Now, by the definition of $\epsilon$ given in the third point of Ad, we know it satisfies $\epsilon (a \cdot a^T)^{-1}_{jj}  |  \sum_{j \le i }  \sum_{k = 1}^d \gamma_{j k } z_k| \le \epsilon_0 d|z| \frac{1}{d} = \epsilon_0  |z|$. It follows that the integral in $z$ is bounded by $\hat{c}$ and so
$$|A^*_{d, i} \pi(x)| \le d \, k_1^2 \, k_2 \,c_2^i \, c_n \, c_5 \, \hat{c} \tilde{\epsilon}^2 \prod_{j = 1}^d \pi_j (x_j).$$
We plug it in $I_2[\pi]$, getting
\begin{equation}
|I_2[\pi]| \le \frac{k_1^2 \, k_2 \, \hat{c} \, c_2^d \, c_5 \tilde{\epsilon}^2}{\pi_i (x_i)} \int_{- \infty}^{x_i} \pi_i (w) dw \le k_1^2 \, k_2 \, \hat{c} \, c_3(\epsilon) \, c_2^d \, c_5 \, \tilde{\epsilon}^2,
\label{eq: stima finale A*d pi}
\end{equation}
as the integral on $w$ is upper bounded by $c_3(\epsilon)$, from the third point of Ad.
We have proved \eqref{eq: I1 bounded} and \eqref{eq: stima finale A*d pi} and, therefore, $b^i$ is clearly bounded. \\
We now want to prove the drift condition A2 on $b^i_\pi$. To do it, we investigate the behavior of $x_i b^i_\pi (x)$. From the fourth point of Ad, which holds true for any $x_i$ such that $|x_i| > \frac{R}{\sqrt{d}}$, it is 
$$x_i I_1 [\pi] \le  \frac{x_i}{2} \sum_{j = 1}^d (a \cdot a^T)_{i j} (- \tilde{\epsilon} (a \cdot a^T)_{j j}^{- 1} \sgn (x_j)) = - \frac{\tilde{\epsilon}}{2} x_i \sgn (x_i) - \frac{\tilde{\epsilon}}{2} x_i \sum_{j \neq i} (a \cdot a^T)_{i j} (a \cdot a^T)_{j j}^{- 1} \sgn (x_j).$$
As we have assumed $|\sum_{j \neq i} (a \cdot a^T)_{i j} (a \cdot a^T)_{j j}^{- 1}| < \frac{1}{2}$, it is 
$$ | - \frac{\tilde{\epsilon}}{2} x_i \sum_{j \neq i} (a \cdot a^T)_{i j} (a \cdot a^T)_{j j}^{- 1} \sgn (x_j) | < \frac{\tilde{\epsilon}}{4} | x_i |.$$
It follows 
$$x_i I_1 [\pi] \le - \frac{\tilde{\epsilon}}{2}| x_i | + \frac{\tilde{\epsilon}}{4} | x_i | = - \frac{\tilde{\epsilon}}{4} | x_i |.$$
Using \eqref{eq: stima finale A*d pi} we also get
$$|x_i I_2 [\pi] | \le k_1^2 \, k_2 \, \hat{c} \,c_3(\epsilon)\, c_2^d \, c_5 \, \tilde{\epsilon}^2 | x_i |.$$
Hence, for $x_i$ such that $|x_i| > \frac{R}{\sqrt{d}}$, there exists $\tilde{c} > 0$ such that
\begin{equation}
x_i b^i_\pi (x) \le x_i I_1 [\pi] + |x_i I_2 [\pi] | \le (- \frac{1}{4} + k_1^2 \, k_2 \, \hat{c} \,c_3(\epsilon)\, c_2^d \, c_5\, \tilde{\epsilon}) \tilde{\epsilon} | x_i | \le - \tilde{c}| x_i |, 
\label{eq: A2 per i}
\end{equation}
where the last inequality is a consequence of the fact we have assumed $\tilde{\epsilon} < \frac{ 1}{4 \, k_1^2 \, k_2 \, \hat{c} \,c_3(\epsilon)\, c_2^d \, c_5}$. 
From \eqref{eq: A2 per i}, using also the boundedness of $b^i_\pi$ showed before, it follows
$$x \cdot b_\pi(x) = \sum_{i = 1}^d x_i b^i_\pi (x) = \sum_{x_i : |x_i| > \frac{R}{\sqrt{d}}}^d x_i b^i_\pi (x) + \sum_{x_i : |x_i| \le \frac{R}{\sqrt{d}}}^d x_i b^i_\pi (x) $$
$$\le - \tilde{c} \sum_{x_i : |x_i| > \frac{R}{\sqrt{d}}}^d | x_i | + \frac{R}{\sqrt{d}} c \le - c_1 |x| + c_2, $$
where the last inequality is a consequence of the fact that, for $|x| > R$, there has to be at least a component $x_i$ such that $|x_i| > \frac{R}{\sqrt{d}}$. Hence, we can use the sup norm and compare it with the euclidean one. Moreover, as $|x|$ is lower bounded by $R$, it exists a constant $C_1 > 0$ such that
$$- c_1 |x| + c_2 \le -C_1 |x|.$$
The drift condition on $b_\pi$ clearly holds.\\
As $b$ is also Lipschitz, the result follows.
\end{proof}

\subsection{Proof of Lemma \ref{lemma: pi0 soddisfa Ad}}
\begin{proof}
We recall that $\pi_0$ has been defined as 
$$\pi_0(x) := c_\eta \prod_{k = 1}^d \pi_{k,0} (x_k),$$
with $\pi_{k,0} (y): = f(\eta (a \cdot a^T)^{-1}_{kk} |y|)$ and
\begin{equation*}
f (x) := \begin{cases} e^{- |x| } \qquad & \mbox{if } |x| \ge 1  \\
\in[1, e^{-1}] \qquad & \mbox{if } \frac{1}{2} < |x| < 1 \\
1 \qquad & \mbox{if } |x| \le \frac{1}{2}.
\end{cases}
\end{equation*}
By construction, $\pi_0$ is clearly in a multiplicative form and always positive. Moreover, point 1 of Ad directly hold true from the definition of $\pi_{j,0} (y) $. To show the second point to hold,
we observe it is 
$$\pi_{k,0} (y \pm z) = f(\eta (a \cdot a^T)^{-1}_{kk} |y \pm z|) \le 2 e^{- \eta (a \cdot a^T)^{-1}_{kk} |y \pm z| }$$
$$\le 2 e^{- \eta (a \cdot a^T)^{-1}_{kk} |y|} e^{\eta (a \cdot a^T)^{-1}_{kk} |z|} \le 4 \pi_{k, 0} (y) e^{\eta (a \cdot a^T)^{-1}_{kk} |z|}. $$
It implies that point 2 of Ad holds with $c_2 = 4$ and $\epsilon = \eta$, as we can choose $\eta$ small enough to make also the condition in the definition of $\epsilon$ satisfied. \\
In order to prove that the third point Ad holds true, we need to show that, for any $y < 0$,
\begin{equation}
\frac{1}{\pi_{k, 0}(y)} \int_{- \infty}^y \pi_{k, 0}(w) dw < \infty.
\label{eq: fourth Ad}
\end{equation}
By the lower and upper bounds on $\pi_{k,0}$ provided through the first property of $f$ we know it is 
$$\frac{1}{\pi_{k, 0}(y)} \int_{- \infty}^y \pi_{k, 0}(w) dw \le 2 e^{\eta (a \cdot a^T)^{-1}_{kk} |y|} \int_{- \infty}^y 2 e^{- \eta (a \cdot a^T)^{-1}_{kk} |w|} dw $$
$$= 4 e^{\eta (a \cdot a^T)^{-1}_{kk} |y|} \frac{e^{- \eta (a \cdot a^T)^{-1}_{kk} |y|}}{\eta (a \cdot a^T)^{-1}_{kk}} \le \frac{4}{\eta \, k_3} =: c_3 (\eta).$$
For $y> 0$ an analogous reasoning applies, thus the third point of Ad follows with $c_3(\epsilon) = c_3 (\eta) = \frac{4}{k_3 \eta}$. It is easy to check that also the fourth point of Ad hold true as, for $|y| > \frac{1}{\eta (a \cdot a^T)^{-1}_{kk} }$, it is
$$\pi'_{k,0} (y) = -\eta(a \cdot a^T)^{-1}_{k k} \sgn(y) \pi_{k,0} (y) \quad \forall k \in \left \{ 1, .. , d \right \}.$$
It means that the fourth point of Ad holds true for $|y| > \frac{R}{\sqrt{d}}$, up to take $R = \frac{\sqrt{d}}{\eta k_3}$.
Moreover, in order to prove that also the fifth point of Ad holds, we observe it is 
$$\frac{\partial^2 \pi_0 (x)}{ \partial x_i \partial x_j} = c_\eta \prod_{l \neq i, j} \pi_{l,0} (x_l)( \pi'_{i,0} (x_i) \pi'_{j,0} (x_j) 1_{i \neq j} +  \pi''_{j,0} (x_j) 1_{i = j}). $$ 
From the definition of $\pi_0$ and the properties of $f$ we have that, for any $ j \in  \left \{ 1, .. , d \right \}$ and for $k=1, 2$, 
$$| \pi^{(k)}_{j,0} (x_j)| = |f^{(k)} (\eta (a \cdot a^T)^{-1}_{jj} |x_j|)| \le 2 (\eta (a \cdot a^T)^{-1}_{jj})^k e^{- \eta (a \cdot a^T)^{-1}_{jj} |x_j| } \le 4 (\eta (a \cdot a^T)^{-1}_{jj})^k \pi_{j,0} (x_j).$$
It follows 
$$|\frac{\partial^2 \pi_0 (x)}{ \partial x_i \partial x_j}| \le 16  \eta^2 (a \cdot a^T)^{-1}_{ii} (a \cdot a^T)^{-1}_{jj} \pi_0 (x). $$
It provides us that condition five of Ad holds true with $c_5 = 16 $ and $\tilde{\epsilon} = \eta$. Finally, we have to check that, according with the definition of $\tilde{\epsilon}$ given in the fifth point of Ad, it is 
$$\tilde{\epsilon} = \eta < \frac{1}{4 k_1^2 k_2 \hat{c} c_3 (\eta) c_2^d c_5} = \frac{ \eta k_3}{4 k_1^2 k_2 \hat{c} \,4 \, 4^d  4^2 },$$
where we have also replaced the values of the constants we have found. It holds true of and only if 
$$1 < \frac{ k_3}{4^{d + 4} k_1^2 k_2 \hat{c}},$$
which is equivalent to ask
$$\hat{c} < \frac{k_3}{4^{d + 4} k_1^2 k_2 }.$$
Being it exactly the condition assumed in the statement of this lemma, all the points gathered in Ad are satisfied.
\end{proof}

\subsection{Proof of Proposition \ref{prop: differenza drift function}}
\begin{proof} \textit{Point 1} \\
We suppose $x_i < 0$. If otherwise it is $x_i \ge  0$ it is enough to act in the same way on the integral between $x_i$ and $\infty$ to get the same result.
We first of all introduce the following quantities 
$$\tilde{I}^i_1[\pi_0 ] (x) := \frac{1}{2} \sum_{j = 1}^d (a \cdot a^T)_{i j}\frac{\partial \pi_0}{ \partial x_j} (x), $$
$$\tilde{I}^i_2[\pi_0 ] (x) =  \int_{- \infty}^{x_i} A^*_{d, i} \pi_0(w_i) dw. $$
We moreover introduce the notation
$$\tilde{I}^i [\pi_0]  (x) = \tilde{I}^i_1[\pi_0 ] (x) + \tilde{I}^i_2[\pi_0 ] (x).$$
According with the definition \eqref{eq: def b sol A*}, we have 
$$b^i_{\pi_0} (x) = \frac{1}{\pi_0 (x)}\tilde{I}^i[\pi_0 ] (x), \qquad b^i_{\pi_1} (x) = \frac{1}{\pi_1 (x)}\tilde{I}^i[\pi_1 ] (x).$$
Let us also recall the notation presented in \eqref{eq: def dT} for which 
$$d_T (x) = \pi_1 (x) - \pi_0 (x) = \frac{1}{M_T} \prod_{l = 1}^d K(\frac{x_l - x_0^l}{h_l(T)}).$$
Since the operator $f \rightarrow \tilde{I}^i [f]$ is linear, we can deduce 
\begin{equation}
b^i_{\pi_1} (x) = \frac{1}{\pi_1 (x)}\tilde{I}^i[\pi_1]  (x) = \frac{1}{\pi_1 (x)}\tilde{I}^i[\pi_0 ] (x) + \frac{1}{\pi_1 (x)}\tilde{I}^i[d_T ] (x).
\label{eq: b pi1 primo paragone}
\end{equation}
As the support of $K$ is included in $[-1, 1]$, if 
$$x \notin K_T = [x_0^1 - h_1 (T), x_0^1 + h_1 (T)] \times ... \times [x_0^d - h_d (T), x_0^d + h_d (T)]$$
then 
$$d_T (x) = \frac{1}{M_T} \prod_{l = 1}^d K(\frac{x_l - x_0^l}{h_l(T)}) = 0.$$
It implies that, on $K_T^c$, $\pi_0 (x) = \pi_1 (x)$ which, together with \eqref{eq: b pi1 primo paragone}, provides us
\begin{equation}
b^i_{\pi_1} (x) =  \frac{1}{\pi_0 (x)}\tilde{I}^i[\pi_0 ] (x) + \frac{1}{\pi_0 (x)}\tilde{I}^i[d_T ] (x) = b^i_{\pi_0} (x) + \frac{1}{\pi_0 (x)}\tilde{I}^i[d_T ] (x).
\label{eq: b start diff}
\end{equation}
It follows that the first point of proposition will be proven as soon as we show that, for 
$$x \in K_T^c, \qquad | \frac{1}{ \pi_0}\tilde{I}^i[d_T ] (x)| \le \frac{c}{M_T}.$$ We start considering $\tilde{I}^i_1 [d_T ] (x)$:
\begin{equation}
\forall x\in K_T^c \qquad \tilde{I}^i_1 [d_T ] (x) = \frac{1}{2} \sum_{j = 1}^d (a \cdot a^T)_{i j}\frac{\partial d_T}{ \partial x_j} (x) = 0.
\label{eq: I1 fine}
\end{equation}
Indeed, by its definition, the function $d_T$ and all its derivatives vanish outside of the compact set $K_T$. 
Regarding $\tilde{I}^i_2 [d_T ] (x)$, by the form of $A^*_{d, i}$ as defined in \eqref{eq: definition Adi} it is
\begin{equation}
\tilde{I}^i_2 [d_T ] (x) = \int_{- \infty}^{x_i} [\int_{\mathbb{R}^d }\big( d_T (\bar{w}_i) - d_T(\bar{w}_{i-1}) + (\gamma \cdot z)_i \frac{\partial}{\partial x_i} d_T(w_i) \big) F(z) dz] dw 
\label{eq: I21, I22}
\end{equation}
$$= \tilde{I}^i_{2,1} [d_T ] (x) + \tilde{I}^i_{2,2} [d_T ] (x) + \tilde{I}^i_{2,3} [d_T ] (x),$$
where it is $\bar{w}_i = (x_1 - (\gamma \cdot z)_1, ... ,x_{i - 1} - (\gamma \cdot z)_{i - 1}, w -(\gamma \cdot z)_i , x_{i + 1}, ... , x_d)$, $\bar{w}_{i-1} = (x_1 - (\gamma \cdot z)_1, ... ,x_{i - 1} - (\gamma \cdot z)_{i - 1}, w , x_{i + 1}, ... , x_d)$ and $w_i = (x_1, ... , x_{i -1}, w, x_{i + 1}, ... , x_d)$. \\
We are going to show that for any $x \in K_T^c$, $ \tilde{I}^i_{2,3} [d_T ] (x) =0$ while $|\frac{1}{\pi_0} \tilde{I}^i_{2,1} [d_T ] (x) | + |\frac{1}{\pi_0} \tilde{I}^i_{2,2} [d_T ] (x) | \le \frac{c}{M_T}$. By the definition \eqref{eq: def dT} of $d_T$ and the fact that its support is included in $K_T$ it is, for any $x \in \mathbb{R}^d$,
$$\tilde{I}^i_{2,1} [d_T ] (x) = \frac{1}{ M_T} \int_{- \infty}^{x_i} \int_{\mathbb{R}^d} \prod_{l < i} K(\frac{x_l - (\gamma \cdot z)_l - x_0^l}{h_l (T)}) K(\frac{w - (\gamma \cdot z)_i - x_0^i}{h_i (T)}) \prod_{l > i} K(\frac{x_l - x_0^l}{h_l (T)}) F(z) dz   dw $$
\begin{equation}
= \frac{1}{ M_T} \int_{- \infty}^{x_i} \int_{\left \{ z : \, (\bar{w}_i - \gamma \cdot z) \in K_T \right \}} \prod_{l < i} K(\frac{x_l - (\gamma \cdot z)_l - x_0^l}{h_l (T)}) K(\frac{w - (\gamma \cdot z)_i - x_0^i}{h_i (T)}) \prod_{l > i} K(\frac{x_l - x_0^l}{h_l (T)}) F(z) dz   dw.
\label{eq: su cui usare fubini}
\end{equation}
With the purpose to use Fubini theorem, we analyse more in detail the condition $(\bar{w}_i - \gamma \cdot z) \in K_T$. It means that, 
$$\forall j < i, \quad x_0^j - h_j(T) \le x_j - (\gamma \cdot z)_j \le x_0^j + h_j(T),$$
\begin{equation}
\forall j > i, \quad x_0^j - h_j(T) \le x_j  \le x_0^j + h_j(T),
\label{eq: contraint x for j big}
\end{equation}
 and for $j = i$
\begin{equation}
x_0^i - h_i(T) \le w - (\gamma \cdot z)_i \le x_0^i + h_i(T),
\label{eq: per fubini calcolo}
\end{equation}
which gives us
$$x_0^i - h_i(T) + (\gamma \cdot z)_i \le w \le x_0^i + h_i(T) + (\gamma \cdot z)_i.$$
Moreover, for $- \infty < w < x_i$, \eqref{eq: per fubini calcolo} also provides $$- \infty < w - x_0^i - h_i(T) \le  (\gamma \cdot z)_i \le w - x_0^i + h_i(T) < x_i - x_0^i + h_i(T).$$
We define the set
\begin{equation*}
\begin{split}
 G^i_z (x) :=  & \left\{ z \in \mathbb{R}^d :   \quad \forall j < i \quad x_j - x_0^j - h_j(T) \le (\gamma \cdot z)_j \le  x_j - x_0^j + h_j(T), \right.\\
    &  \left.  - \infty < (\gamma \cdot z)_i < x_i - x_0^i + h_i(T) \mbox{  and, } \quad \forall j > i, \, \, - \infty < (\gamma \cdot z)_j < \infty \right\}.
\end{split}
\end{equation*}
The use of Fubini theorem on \eqref{eq: su cui usare fubini} provides us
\begin{equation}
\frac{\prod_{l > i} K(\frac{x_l - x_0^l}{h_l (T)})}{ M_T} \int_{z \in G^i_z (x)} \prod_{l < i} K(\frac{x_l - (\gamma \cdot z)_l - x_0^l}{h_l (T)}) (\int_{x_0^i - h_i(T) + (\gamma \cdot z)_i}^{x_i} K(\frac{w - (\gamma \cdot z)_i - x_0^i}{h_i (T)}) dw ) F(z) dz.
\label{eq: I21 after Fubini}
\end{equation}
We observe that the supremum value in the innermost integral should have been $\min(x_i\, , \, x_0^i + h_i(T) + (\gamma \cdot z)_i)$ but when $x_i > x_0^i + h_i(T) + (\gamma \cdot z)_i$, the integral is 
$$\int_{x_0^i - h_i(T) + (\gamma \cdot z)_i}^{ x_0^i + h_i(T) + (\gamma \cdot z)_i} K(\frac{w - (\gamma \cdot z)_i - x_0^i}{h_i (T)}) dw = h_i (T) \int_{-1}^{1} K(u) du = 0, $$
where we have used the change of variable $u := \frac{w - (\gamma \cdot z)_i - x_0^i}{h_i (T)}$ and the property \eqref{eq: proprieta K} of the kernel function $K$.
When $x_i < x_0^i + h_i(T) + (\gamma \cdot z)_i$ it is $(\gamma \cdot z)_i > x_i - x_0^i - h_i(T) $. 
We can introduce such a constraint in the set $G^i_z$, which becomes 
\begin{equation*}
\begin{split}
 \tilde{G}^i_z (x) :=  & \left\{ z \in \mathbb{R}^d : \, \forall j \le i \quad   x_j - x_0^j - h_j(T) \le (\gamma \cdot z)_j \le  x_j - x_0^j + h_j(T),  \right.\\
    &  \left.   \quad \forall j > i \quad - \infty < (\gamma \cdot z)_j < \infty \right\}.
\end{split}
\end{equation*}
We now observe that, defining
\begin{equation}
\mathcal{J} := \left \{ j \in \left \{ 1, ... , d \right \} : \quad x_j \notin [x_0^j - h_j(T), x_0^j + h_j(T)]  \right \},
\label{eq: def J}
\end{equation}
and, remarking that for $k \notin \mathcal{J} $ the density $\pi_k (x_k)$ is lower bounded away from zero, it is
\begin{equation}
\frac{1}{\pi_0 (x)} \le \frac{c}{\prod_{j \in \mathcal{J}}^d \pi_j (x_j)} = c e^{\eta \sum_{j \in \mathcal{J}}(a \cdot a^T)^{-1}_{j j}  |x_j|}.
\label{eq: stima 1 su pi}
\end{equation}
We recall that $\mathcal{J}$ is not empty as $x \in K_T^c$ but if $j > i$ is such that $j \in \mathcal{J}$, then $\tilde{I}^i_{2,1} [d_T](x) = 0$ as a consequence of \eqref{eq: contraint x for j big}, which derives from the definition of $d_T$ and the properties of the kernel function $K$.
Moreover, from the definition of $\tilde{G}^i_z(x)$, if $j \le i$ is such that $j \in \mathcal{J}$, then it must be $x_j \in [ (\gamma \cdot z)_j + x_0^j - h_J(T), (\gamma \cdot z)_j + x_0^j + h_j(T)] $. Therefore, 
$$e^{\eta (a \cdot a^T)^{-1}_{j j}  |x_j|} \le e^{\eta (a \cdot a^T)^{-1}_{j j} \max (|(\gamma \cdot z)_j + x_0^j - h_J(T)|, |(\gamma \cdot z)_j + x_0^j + h_j(T)|)} $$
$$ \le c\, e^{\eta (a \cdot a^T)^{-1}_{j j} |(\gamma \cdot z)_j|} \le c\, e^{\eta (a \cdot a^T)^{-1}_{j j} |\gamma | |z|}.$$
We define $\bar{j} := \arg \max_{j \in \mathcal{J}} (a \cdot a^T)^{-1}_{j j}$ and so we get
$$\frac{1}{\pi_0 (x)} \le c e^{\eta |\mathcal{J}| (a \cdot a^T)^{-1}_{\bar{j} \bar{j}} |\gamma | |z|} \le c e^{\eta d (a \cdot a^T)^{-1}_{\bar{j} \bar{j}} |\gamma | |z|}.$$
Using also the form of $\tilde{I}^i_{2,1} [d_T ] (x)$ given by \eqref{eq: I21 after Fubini}, but on $\tilde{G}^i_z(x)$ instead of on $G^i_z (x)$ as now we are considering $x_i < x_0^i + h_i(T) + (\gamma \cdot z)_i$, we get
$$|\frac{1}{\pi_0 (x)} \tilde{I}^i_{2,1} [d_T ](x) | \le \frac{c \left \| K \right \|_\infty^{d}}{ M_T} \int_{z \in \tilde{G}^i_z(x)} e^{\eta d (a \cdot a^T)^{-1}_{\bar{j} \bar{j}} |\gamma | |z|} |\int_{x_0^i - h_i(T) + (\gamma \cdot z)_i}^{x_i} dw | F(z) dz $$
$$\le \frac{c}{ M_T} \int_{z \in \tilde{G}^i_z} e^{\eta d (a \cdot a^T)^{-1}_{\bar{j} \bar{j}} |\gamma | |z|} |x_i - x_0^i + h_i(T) - (\gamma \cdot z)_i | F(z) dz$$
$$\le \frac{c}{ M_T} \int_{z \in \tilde{G}^i_z (x)} e^{\eta d (a \cdot a^T)^{-1}_{\bar{j} \bar{j}} |\gamma | |z|} (1 +| h_i(T)| + |(\gamma \cdot z)_i |) F(z) dz. $$
The integral in $z$ here above is upper bounded. Indeed, we enlarge the integration domain from $\tilde{G}^i_z (x)$ to $\mathbb{R}^d$ and we use the fifth point of A3, taking from the beginning $\eta$ small enough to guarantee $\eta d (a \cdot a^T)^{-1}_{\bar{j} \bar{j}} |\gamma | \le \epsilon_0 $. It follows
\begin{equation}
\forall x \in K_T^c \qquad |\frac{1}{\pi_0(x)} \tilde{I}^i_{2,1} [d_T]  (x)| \le \frac{c}{M_T}.
\label{eq: I21 fine}
\end{equation}
In order to prove the first point of the proposition we are left to evaluate $\tilde{I}^i_{2,2} [d_T ] (x)$ and $\tilde{I}^i_{2,3} [d_T ] (x)$ on $K_T^c$.
On $\tilde{I}^i_{2,2} [d_T ] (x)$ a reasoning analogous to the one on $\tilde{I}^i_{2,1} [d_T ] (x)$ applies. It is
$$\tilde{I}^i_{2,2} [d_T ] (x) = \frac{1}{ M_T} \int_{- \infty}^{x_i} \int_{\left \{ z : \, (\bar{w}_{i-1} - \gamma \cdot z) \in K_T \right \}} \prod_{l < i} K(\frac{x_l - (\gamma \cdot z)_l - x_0^l}{h_l (T)}) K(\frac{w - x_0^i}{h_i (T)})$$
$$ \times \prod_{l > i} K(\frac{x_l - x_0^l}{h_l (T)}) F(z) dz   dw.$$
We can now act on $\tilde{I}^i_{2,2} [d_T ] (x)$ as we did above, on $\tilde{I}^i_{2,1} [d_T ] (x)$. To use Fubini theorem the only difference is that in this case \eqref{eq: per fubini calcolo} becomes
\begin{equation}
x_0^i - h_i(T) \le w \le x_0^i + h_i(T).
\label{eq: I22 constraint w}
\end{equation}
Defining 
\begin{equation}
\begin{split}
 G^i_{2,z} (x) :=  & \left\{ z \in \mathbb{R}^d : \, \forall j < i \, \, \,   x_j - x_0^j - h_j(T) \le (\gamma \cdot z)_j \le  x_j - x_0^j + h_j(T),  \right.\\
    &  \left.   \quad \forall j \ge i \, \, - \infty < (\gamma \cdot z)_j < \infty \right\},
\end{split}
\label{eq: def G2z}
\end{equation}
Fubini theorem provides $\tilde{I}^i_{2,2} [d_T ] (x)$ is equal to
\begin{equation}
\frac{\prod_{l > i} K(\frac{x_l - x_0^l}{h_l (T)})}{ M_T} \int_{z \in  G^i_{2,z} (x)} \prod_{l < i} K(\frac{x_l - (\gamma \cdot z)_l - x_0^l}{h_l (T)}) (\int_{x_0^i - h_i(T)}^{x_i} K(\frac{w - x_0^i}{h_i (T)}) dw ) F(z) dz.
\label{eq: I22 after Fubini}
\end{equation}
We remark first of all that, if $x_i < x_0^i - h_i(T)$, then the innermost integral is empty. Again, we observe that the supremum value in the integral should have been $\min(x_i\, , \, x_0^i + h_i(T))$ but when $x_i > x_0^i + h_i(T) $, the integral is 
\begin{equation}
\int_{x_0^i - h_i(T)}^{ x_0^i + h_i(T) } K(\frac{w- x_0^i}{h_i (T)}) dw = h_i (T) \int_{-1}^{1} K(u) du = 0, 
\label{eq: K nullo per I22}
\end{equation}
using the change of variable $u := \frac{w - x_0^i}{h_i (T)}$ and the property \eqref{eq: proprieta K} of the kernel function $K$.
We want to evaluate $|\frac{1}{\pi_0(x)} \tilde{I}^i_{2,2} [d_T]  (x)|$ and so we need to consider once again the set $\mathcal{J}$, as defined in \eqref{eq: def J}. We observe that \eqref{eq: stima 1 su pi} still holds and that, as a consequence of \eqref{eq: contraint x for j big} and of \eqref{eq: K nullo per I22}, if $j \le i$ is such that $j \in \mathcal{J}$, then $\tilde{I}^i_{2,2} [d_T]  (x) = 0$. We are left to study the case in which $j > i$ is such that $j \in \mathcal{J}$. By the definition of $G^i_{2,z} (x)$ it must be $x_j \in [ (\gamma \cdot z)_j + x_0^j - h_J(T), (\gamma \cdot z)_j + x_0^j + h_j(T)] $. Therefore, acting exactly as we did in order to prove \eqref{eq: I21 fine}, we easily get 
$$|\frac{1}{\pi_0(x)} \tilde{I}^i_{2,2} [d_T]  (x)| \le \frac{c}{ M_T} \int_{z \in G^i_{2,z} (x)} e^{\eta d (a \cdot a^T)^{-1}_{\bar{j} \bar{j}} |\gamma | |z|} (1 +| h_i(T)|) F(z) dz. $$
It follows 
\begin{equation}
\forall x \in K_T^c \qquad |\frac{1}{\pi_0(x)} \tilde{I}^i_{2,2} [d_T]  (x)| \le \frac{c}{M_T}.
\label{eq: I22 fine}
\end{equation}
Regarding $\tilde{I}^i_{2,3} [d_T]  (x)$, we want to show it is null for $x \in K_T^c$. As $x \in K_T^c$, there must be a $j \in \left \{ 1, ... , d \right \}$ for which 
$$x_j \notin [x_0^j - h_j(T), x_0^j + h_j(T)].$$
If $j \neq i$ then, as $w_i = (x_1, ... , w, ... , x_d)$, it clearly follows that also $w_i$ does not belong to $K_T$. Therefore, since the function $d_T$ and its derivatives vanish outside the compact set $K_T$, it yields $ \tilde{I}^i_{2,3} [d_T ] (x) = 0$.  \\
If otherwise $j=i$, then we have to distinguish two different cases: $x_i < x_0^i - h_i(T)$ and $x_i > x_0^i + h_i(T)$. In the first case, as $w \in (- \infty, x_i)$, we have that also $w$ is always less than $x_0^i - h_i(T)$ and so, again, $w_i \notin K_T$. It implies $ \tilde{I}^i_{2,3} [d_T ] (x)= 0$.
We are left to study the case in which $x_i > x_0^i + h_i(T)$. We observe that the set $[x_0^i - h_i(T), x_0^i + h_i(T)]$ is now necessarily included in $(- \infty, x_i)$ and outside it the function $d_T$ and its derivatives are null. Therefore, we can in this case see $ \tilde{I}^i_{2,3} [d_T ] (x)$ as 
\begin{equation}
\frac{1}{M_T} \int_{x_0^i - h_i(T)}^{x_0^i + h_i(T)} \int_{\mathbb{R}^d} (\gamma \cdot z)_i \frac{\partial}{\partial x_i} (\prod_{l \neq i} K(\frac{x_l - x_0^l}{h_l (T)})K(\frac{x_i - x_0^i}{h_i (T)})) \big|_{x_i = w} F(z) dz dw.
\label{eq: fine Ad I22}
\end{equation}
We observe that
$$\frac{\partial}{\partial x_i} (\prod_{l \neq i} K(\frac{x_l - x_0^l}{h_l (T)})K(\frac{x_i - x_0^i}{h_i (T)})) \big|_{x_i = w} = \frac{1}{h_i(T)} \prod_{l \neq i} K(\frac{x_l - x_0^l}{h_l (T)})K'(\frac{w - x_0^i}{h_i (T)}).$$
Hence, replacing it in \eqref{eq: fine Ad I22}, we get $\tilde{I}^i_{2,3} [d_T ] (x)$ is equal to
\begin{equation}
\frac{1}{ M_T} \prod_{l \neq i} K(\frac{x_l - x_0^l}{h_l (T)})( \int_{x_0^i - h_i(T)}^{x_0^i + h_i(T)} \frac{1}{h_i (T)} K'(\frac{w - x_0^i}{h_i (T)})dw)\int_{\mathbb{R}^d} (\gamma \cdot z)_i  F(z) dz.
\label{eq: second term final}
\end{equation}
With the change of variable $u:= \frac{w - x_0^i}{h_i (T)}$ we obtain
$$\int_{x_0^i - h_i(T)}^{x_0^i + h_i(T)} \frac{1}{h_i (T)} K'(\frac{w - x_0^i}{h_i (T)})dw = \int_{-1}^1  K'(u) du = K(1) - K(-1)= 0,$$
the last being null as $K$ is a $C^\infty$ function whose support is included in $[-1,1]$.
Replacing the last equation in \eqref{eq: second term final}, it yields
\begin{equation}
\forall x \in K_T^c  \qquad \tilde{I}^i_{2,3} [d_T ] (x) = 0.
\label{eq: I23 fine}
\end{equation}
From \eqref{eq: I1 fine}, \eqref{eq: I21 fine}, \eqref{eq: I22 fine} and \eqref{eq: I23 fine} it follows that, for any $x \in K_T^c$, $|\frac{1}{\pi_0 (x)} \tilde{I}^i [d_T ] (x)| \le \frac{c}{M_T}$ and so, as a consequence of \eqref{eq: b start diff}, $|b^i_{\pi_1} (x) - b^i_{\pi_0} (x)| \le \frac{c}{M_T}$. \\
\\
\textit{Proof point 2}\\
From \eqref{eq: I1 fine} and \eqref{eq: I23 fine}, it turns out that our goal is to show that
\begin{equation}
\int_{K_T^c} |\tilde{I}^i_{2,1} [d_T ] (x) + \tilde{I}^i_{2,2} [d_T ] (x)  | dx \le \frac{c}{M_T} \prod_{l = 1}^d h_l(T).
\label{eq: goal diff su KT c}
\end{equation}
We now recall that, through Fubini theorem, $\tilde{I}^i_{2,1} [d_T ] (x)$ can be seen as in \eqref{eq: I21 after Fubini} :
$$\int_{K_T^c} |\tilde{I}^i_{2,1} [d_T ] (x) | dx $$
$$\le  \int_{K_T^c} \frac{\left \| K \right \|_\infty^{d - i} }{M_T} \int_{z \in \tilde{G}^i_z(x)} | \prod_{l < i} K(\frac{x_l - (\gamma \cdot z)_l - x_0^l}{h_l (T)})| (\int_{x_0^i - h_i(T) + (\gamma \cdot z)_i}^{ x_i} |K(\frac{w - (\gamma \cdot z)_i - x_0^i}{h_i (T)})| dw ) F(z) dz \, dx, $$
where $\tilde{G}^i_z(x)$ as defined below \eqref{eq: I21 after Fubini}:
\begin{equation*}
\begin{split}
 \tilde{G}^i_z (x) :=  & \left\{ z \in \mathbb{R}^d : \, \forall j \le i \quad   x_j - x_0^j - h_j(T) \le (\gamma \cdot z)_j \le  x_j - x_0^j + h_j(T),  \right.\\
    &  \left.   \quad \forall j > i \quad - \infty < (\gamma \cdot z)_j < \infty \right\}.
\end{split}
\end{equation*}
We know moreover from \eqref{eq: contraint x for j big}  that, for $j > i$, $x_0^j - h_j(T) \le x_j \le x_0^j + h_j(T)$. \\
We observe first of all that 
$$\int_{x_0^i - h_i(T) + (\gamma \cdot z)_i}^{ x_i} |K(\frac{w - (\gamma \cdot z)_i - x_0^i}{h_i (T)})| dw \le h_i(T) \int_{\mathbb{R}} |K(u)| du \le c. $$
Therefore, using also Fubini theorem once again, we get
$$\int_{K_T^c} |\tilde{I}^i_{2,1} [d_T ] (x) | dx \le c \int_{\mathbb{R}^d} \int_{x \in G_x(z)} dx F(z) dz,$$
where the set $G_x(z)$ derives from $\tilde{G}^i_z (x)$ and from \eqref{eq: contraint x for j big} directly, writing the constraint on the components of $x$ instead of on the components of $z$:
\begin{equation*}
\begin{split}
 G_x(z) :=  & \left\{ x \in \mathbb{R}^d : \, \forall j \le i \quad  (\gamma \cdot z)_j + x_0^j - h_j(T) \le x_j \le  (\gamma \cdot z)_j + x_0^j + h_j(T),  \right.\\
    &  \left.   \quad \forall j > i \quad  x_0^j - h_j(T) \le x_j \le x_0^j + h_j(T) \right\}.
\end{split}
\end{equation*}
Clearly, by its definition, $|G_x(z) | \le c \prod_{j = 1}^d h_j(T)$ and so, as the jump intensity is finite, it follows
$$\int_{K_T^c} |\tilde{I}^i_{2,1} [d_T ] (x) | dx \le \frac{c}{M_T}\prod_{j = 1}^d h_j(T), $$
as we wanted. \\
We act in the same way on $\tilde{I}^i_{2,2} [d_T ] (x) $, remarking that from \eqref{eq: I22 after Fubini} it is
$$\int_{K_T^c} |\tilde{I}^i_{2,2} [d_T ] (x) | dx $$
$$ \le  \int_{K_T^c} \frac{\left \| K \right \|_\infty^{d - i}}{M_T} \int_{z \in G^i_{2,z}(x)} | \prod_{l < i} K(\frac{x_l - (\gamma \cdot z)_l - x_0^l}{h_l (T)})|  (\int_{x_0^i - h_i(T)}^{ x_i} |K(\frac{w - x_0^i}{h_i (T)})| dw ) F(z) dz \, dx, $$
with $G^i_{2,z}(x)$ as in \eqref{eq: def G2z}. As before, the integral in $dw$ is bounded and we can apply once again Fubini theorem, getting
$$\int_{K_T^c} |\tilde{I}^i_{2,2} [d_T ] (x) | dx \le c \int_{\mathbb{R}^d} \int_{x \in G_{2,x}(z)} dx F(z) dz,$$
where 
\begin{equation*}
\begin{split}
 G_{2,x}(z) :=  & \left\{ x \in \mathbb{R}^d : \, \forall j < i \quad  (\gamma \cdot z)_j + x_0^j - h_j(T) \le x_j \le  (\gamma \cdot z)_j + x_0^j + h_j(T),  \right.\\
    &  \left.   \quad \forall j \ge i \quad  x_0^j - h_j(T) \le x_j \le x_0^j + h_j(T) \right\}.
\end{split}
\end{equation*}
The set $G_{2,x}(z)$ derives from ${G}^i_{2,z} (x)$, writing the constraint on the components of $x$ instead of on the components of $z$, and from the fact that $x_0^j - h_j(T) \le x_j \le x_0^j + h_j(T)$ for $j \ge i$, as a consequence of \eqref{eq: contraint x for j big}, \eqref{eq: I22 constraint w} and \eqref{eq: K nullo per I22}.
By its definition, it is $|G_{2,x}(z) | \le c \prod_{j = 1}^d h_j(T)$ and so, as the jump intensity is finite, it follows
$$\int_{K_T^c} |\tilde{I}^i_{2,2} [d_T ] (x) | dx \le \frac{c}{M_T}\prod_{j = 1}^d h_j(T).$$
It implies \eqref{eq: goal diff su KT c}.  \\
\\
\textit{Proof Point 3} \\
We now want to investigate how different $b_{\pi_0}$ and $b_{\pi_1}$ are on the compact set $K_T$. From \eqref{eq: b pi1 primo paragone} we obtain
$$b^i_{\pi_1} - b^i_{\pi_0} = (\frac{1}{\pi_1} - \frac{1}{\pi_0}) \tilde{I}^i[\pi_0] + \frac{1}{\pi_1} \tilde{I}^i[d_T] = \frac{\pi_0 - \pi_1}{\pi_1} \frac{1}{\pi_0} \tilde{I}^i[\pi_0] + \frac{1}{\pi_1} \tilde{I}^i[d_T] = \frac{d_T}{\pi_1} b^i_{\pi_0} + \frac{1}{\pi_1} \tilde{I}^i[d_T]. $$
We have to evaluate such a difference on the compact set $K_T$. For how we have defined $\pi_1 = \pi_0 + d_T$, we see first of all it is lower bounded away from $0$. Moreover we know from Lemma \ref{lemma: pi0 soddisfa Ad} that $\pi_0$ satisfies Assumption Ad and so, using Proposition \ref{prop: b pi proprieta}, $b_{\pi_0}$ is bounded. Furthermore, as $d_T$ has been defined as in \eqref{eq: def dT}, we get
\begin{equation}
\left \| d_T \right \|_\infty \le \frac{c}{M_T}.
\label{eq: norma inf dT}
\end{equation}
Hence, we deduce that
\begin{equation}
\forall x \in K_T \qquad |b^i_{\pi_1} - b^i_{\pi_0}| \le c(\frac{1}{M_T} + \tilde{I}^i[d_T] (x)). 
\label{eq: diff b punto 2 start}
\end{equation}
We therefore need to evaluate $\tilde{I}^i[d_T] (x) = \tilde{I}_1^i[d_T] (x) + \tilde{I}_2^i[d_T] (x)$ on $K_T$. As
\begin{equation}
\left \| \frac{\partial d_T}{\partial x_j} \right \|_\infty \le \frac{c}{M_T} \frac{1}{h_j (T)}, 
\label{eq: norma inf deriv dT}
\end{equation}
it clearly follows
\begin{equation}
\tilde{I}_1^i[d_T] (x) \le \frac{c}{M_T} \sum_{j = 1}^d \frac{1}{h_j (T)}. 
\label{eq: I1 on KT}
\end{equation}
Regarding $\tilde{I}_2^i[d_T] (x)$, according with \eqref{eq: I21, I22} we see it as the sum of $\tilde{I}_{2,1}^i[d_T] (x)$, $\tilde{I}_{2,2}^i[d_T] (x)$ and $\tilde{I}_{2,3}^i[d_T] (x)$. As $x \in K_T$, $x_i \in [x_0^i - h_i (T), x_0^i + h_i(T)]$. Therefore, using also the definition of $d_T$ as function of $K$, the first integral should be between $x_0^i - h_i (T)$ and $x_i$. We enlarge the domain of integration to $[x_0^i - h_i (T), x_0^i + h_i(T)]$ and so, using also \eqref{eq: norma inf dT} and \eqref{eq: norma inf deriv dT}, we get
$$|\tilde{I}_{2}^i[d_T] (x)| \le \int_{x_0^i - h_i (T)}^{x_0^i + h_i (T)} \int_{\mathbb{R}^d}|d_T(\tilde{w}_i) - d_T(\tilde{w}_{i- 1}) + (\gamma \cdot z)_i \frac{\partial}{ \partial x_i} d_T (w_i)| F(z) dz dw $$
$$ \le 2 \, \lambda \int_{x_0^i - h_i (T)}^{x_0^i + h_i (T)}\left \| d_T \right \|_\infty dw + \int_{x_0^i - h_i (T)}^{x_0^i + h_i (T)} \int_{\mathbb{R}^d} |(\gamma \cdot z)_i | \left \| \frac{\partial d_T}{\partial x_i} \right \|_\infty F(z) dz dw  $$
\begin{equation}
\le \frac{c h_i (T)}{M_T} + \frac{c h_i (T)}{M_T} \frac{1}{h_i (T)}.
\label{eq: I22 on KT}
\end{equation}
Replacing \eqref{eq: I1 on KT} and \eqref{eq: I22 on KT} in \eqref{eq: diff b punto 2 start} we obtain that, for any $x \in K_T$, 
$$|b^i_{\pi_1} - b^i_{\pi_0}| \le \frac{c}{M_T}(1 + \sum_{j= 1}^d \frac{1}{h_j (T)} + h_i (T) + 1) \le \frac{c}{M_T} \sum_{j= 1}^d \frac{1}{h_j (T)}, $$
where the last inequality is a consequence of the fact that, $\forall j \in \left \{ 1, ... , d \right \}$, $h_j(T) \rightarrow 0$ for $T \rightarrow \infty$ and so, if compared with the second term in the equation here above, all the other terms are negligible.
\end{proof}

\subsection{Proof of Lemma \ref{lemma: Girsanov}}
\begin{proof} \textit{Point 1} \\
The absolute continuity $\mathbb{P}^{(T)}_{b_{\pi_1}} \ll \mathbb{P}^{(T)}_{b_{\pi_0}}$ and the expression for $Z^{(T)} := \frac{d \mathbb{P}^{(T)}_{b_{\pi_1}}}{d \mathbb{P}^{(T)}_{b_{\pi_0}}}$ are both obtained by Girsanov formula, changing the drift $b_{\pi_0}$ of $X^{(0)}$ solution of \eqref{eq: EDS con pi0} to the drift $b_{\pi_1}$, appearing in $X^{(1)}$ solution of 
$$X^{(1)}_t= X^{(1)}_0 + \int_0^t b_{\pi_1}(X^{(1)}_s) ds + \int_0^t a \, dW_s + \int_0^t \int_{\mathbb{R}^d \backslash \left \{0 \right \}} \gamma \, \, z \, \tilde{\mu}(ds, dz).$$
It is
{\modch 
\begin{align} {\label{eq: Girsanov start}}
 Z^{(T)} & := \frac{d \mathbb{P}^{(T)}_{b_{\pi_1}}}{d \mathbb{P}^{(T)}_{b_{\pi_0}}}((X_s)_{0 \le s \le T}) \nonumber \\
& = \frac{\pi_1 }{\pi_0 } (X^{(0)}) \exp \big[ \int_0^T (b^t_{\pi_1} (X_s) - b^t_{\pi_0} (X_s))(a^t a)^{-1}dX_s  \\
& - \frac{1}{2} \int_0^T [b_{\pi_1}^t(X_s)(a^t a)^{-1}b_{\pi_1}(X_s) - b^t_{\pi_0} (X_s)(a^t a)^{-1}b_{\pi_0}(X_s)] ds \big] \nonumber.
\end{align}
 An analogous computation for diffusion processes, in absence of jumps and for $d=1$, can be found in Theorem 1.12 of \cite{Kut}. In our situation we have the absolute continuity $\mathbb{P}^{(T)}_{b_{\pi_1}} \ll \mathbb{P}^{(T)}_{b_{\pi_0}}$ and the expression for $Z^{(T)}$ as above thanks to Theorems III.5.19 and IV.4.39 in \cite{JS}. } \\
We underline the fact that in the expression of $Z^{(T)}$ there is the ratio $\frac{\pi_1 }{\pi_0 } (X^{(0)})$ because the two diffusions $(X^{(0)})_t$ and $(X^{(1)})_t$ are both stationary, but with different stationary laws. \\
\\
\textit{Point 2}\\
We aim at controlling by below the quantity $\mathbb{P}^{(T)}_{b_{\pi_0}} (Z^{(T)} \ge \frac{1}{\lambda})$, for $\lambda > 0$. We remark that, by the definitions of $\pi_0$ and $\pi_1$ given in Section \ref{S: construction priors}, the ratio $\frac{\pi_1 }{\pi_0 }$ is equal to $1$ outside some compact set that can be chosen independent of $T$. On the compact set, instead, it converges uniformly to $1$. The ratio is therefore bounded away from $0$ if $T$ is large and so we have 
$$\frac{\pi_1 }{\pi_0 } (X^{(0)}) \ge c > 0.$$
We therefore focus on the exponential part in \eqref{eq: Girsanov start}, that we denote as $\mathcal{E}^{(T)}$. Since under the law $\mathbb{P}^{(T)}_{b_{\pi_0}}$ the process $(X_t)_t$ has the same law as $(X^{(0)}_t)_t$, solution of the stochastic differential equation proposed in \eqref{eq: EDS con pi0}, the law of $\log (\mathcal{E}^{(T)})$ is, after having replaced the dynamic of $X$, the law of the random variable
{\modch
\begin{align*}
&\frac{1}{2} \int_0^T < (b^t_{\pi_1} (X^{(0)}_s) - b^t_{\pi_0} (X^{(0)}_s))a^{-1}, (b^t_{\pi_1} (X^{(0)}_s) - b^t_{\pi_0} (X^{(0)}_s))a^{-1}>  ds \\
& + \int_0^T (b^t_{\pi_1} (X^{(0)}_s) - b^t_{\pi_0} (X^{(0)}_s))a^{-1} dW_s \\
& = : I_T + M_T.
\end{align*}

}

Hence we can write that, for $T$ large enough, 
$$\mathbb{P}^{(T)}_{b_{\pi_0}} (Z^{(T)} \ge \frac{1}{\lambda}) \ge \mathbb{P}^{(T)}_{b_{\pi_0}} (\mathcal{E}^{(T)} \ge \frac{1}{c \lambda}) = \mathbb{P}^{(T)}_{b_{\pi_0}} (-\log (\mathcal{E}^{(T)}) \le \log (c \lambda)) $$
$$\ge 1 - \mathbb{P}^{(T)}_{b_{\pi_0}} (|\log (\mathcal{E}^{(T)})| > \log (c \lambda)) = 1 - \mathbb{P}(|M_T| + |I_T|> \log (c \lambda)),$$
where in the last equality we have used that, as explained here above, the law of $\log (\mathcal{E}^{(T)})$ under $\mathbb{P}^{(T)}_{b_{\pi_0}}$ is the law of $M_T + I_T$. We now assume that $\lambda > \frac{1}{c}$, such as $\lambda c > 1$. From Markov inequality it follows
$$\mathbb{P}(|M_T| + |I_T|> \log (c \lambda)) \le \mathbb{P}(|M_T|> \frac{1}{2} \log (c \lambda)) + \mathbb{P}(|I_T|> \frac{1}{2} \log (c \lambda)) \le $$
$$\le \frac{4}{(\log (c \lambda))^2} \mathbb{E}[M_T^2] + \frac{2}{\log (c \lambda)} \mathbb{E}[|I_T|] = (\frac{8}{(\log (c \lambda))^2 } + \frac{2}{\log (c \lambda)}) \mathbb{E}[I_T], $$
where the last equality is a consequence of the positivity of $I_T$ and of Ito's isometry, which gives us $\mathbb{E}[M_T^2] = 2 \mathbb{E}[I_T]$.
It remains to evaluate $\mathbb{E}[I_T]$. 
{\modch To do that we remark that
$$|< (b^t_{\pi_1} (X^{(0)}_s) - b^t_{\pi_0} (X^{(0)}_s))a^{-1}, (b^t_{\pi_1} (X^{(0)}_s) - b^t_{\pi_0} (X^{(0)}_s))a^{-1}>| \le \left \| a^{-1} \right \|^2_{op} \left \| b_{\pi_1} (X^{(0)}_s) - b_{\pi_0} (X^{(0)}_s) \right \|^2_{L^2},$$
where we have introduced the operator norm $\left \| \cdot \right \|_{op}$.}
Then, as the process $(X_t^{(0)})_t$ is stationary with invariant law $\pi_0$, it is
{\modch 
$$\mathbb{E}[I_T] \le T \frac{ \left \| a^{-1} \right \|^2_{op}}{2 } \int_{\mathbb{R}^d} |b_{\pi_1} (x) - b_{\pi_0} (x)|^2 \pi_0 (x) dx. $$}
From the assumption \eqref{eq: calib int b} in the statement of the lemma, it follows that
$$\sup_{T \ge 0} \mathbb{E}[I_T] < \infty,$$
which is sufficient to ensure that there exists $\lambda_0$ such that, for any $T$ large enough, 
$$\mathbb{P}^{(T)}_{b_{\pi_0}} (Z^{(T)} \ge \frac{1}{\lambda_0}) \ge \frac{1}{2},$$
as we wanted.
\end{proof}

\section*{Acknowledgement}
The author is very grateful to Arnaud Gloter who supported the project and helped to improve the paper.

\end{document}